\pdfoutput=1
\documentclass{amsart}

\usepackage[utf8]{inputenc}
\usepackage[english]{babel}

\usepackage{braket}
\usepackage{amsmath}
\usepackage{amstext}
\usepackage{amssymb}
\usepackage{amsthm}
\usepackage{mathtools}
\usepackage{stmaryrd}
\usepackage{mathrsfs}
\usepackage{extarrows}
\usepackage{afterpage}

\usepackage{microtype}

\usepackage{graphicx}
\usepackage{amsfonts}

\usepackage{units}
\usepackage{enumerate}
\usepackage{url}
\usepackage{multirow}

\usepackage{tikz}
\usetikzlibrary{matrix,arrows,calc,fit}

\usepackage{hyperref}

\usepackage[margin=1cm]{caption}

\allowdisplaybreaks[4]

\theoremstyle{definition}
\newtheorem{theorem}{Theorem}[section]
\newtheorem{definition}[theorem]{Definition}
\newtheorem{remark}[theorem]{Remark}
\newtheorem{example}[theorem]{Example}
\newtheorem{lemma}[theorem]{Lemma}
\newtheorem{proposition}[theorem]{Proposition}
\newtheorem{corollary}[theorem]{Corollary}

\newenvironment{provedtheorem}
  {\pushQED{\qed}\begin{theorem}}
  {\popQED\end{theorem}}
\newenvironment{provedcorollary}
  {\pushQED{\qed}\begin{corollary}}
  {\popQED\end{corollary}}

\newenvironment{customthm}[1]
  {\innercustomthm}
  {\endinnercustomthm}

\newcommand{\N}{\mathbb{N}}
\newcommand{\Z}{\mathbb{Z}}
\newcommand{\R}{\mathbb{R}}
\newcommand{\C}{\mathbb{C}}
\newcommand{\Q}{\mathbb{Q}}
\newcommand{\K}{\mathbb{K}}

\newcommand{\M}{\mathcal{M}}
\renewcommand{\S}{\mathcal{S}}
\newcommand{\I}{\mathcal{I}}
\newcommand{\F}{\mathcal{F}}
\newcommand{\A}{\mathcal{A}}

\newcommand{\W}{{\bf W}}
\newcommand{\GW}{{\bf G_W}}

\newcommand{\Y}{{\bf Y}}
\newcommand{\YW}{{\bf Y_W}}
\newcommand{\X}{\mathbf X}
\DeclareMathOperator{\Br}{Br}
\DeclareMathOperator{\Ind}{Ind}

\newcommand{\de}{\partial}

\DeclareMathOperator{\id}{id}
\DeclareMathOperator{\rk}{rk}
\DeclareMathOperator{\interior}{int}
\DeclareMathOperator{\pt}{pt}

\newcommand{\Cr}{\text{Cr}}

\DeclareMathOperator{\ch}{char}

\renewcommand{\Im}{\text{im}\,}
\newcommand{\coker}{\text{coker}\,}

\newcommand{\w}{\mathit w}
\newcommand{\ph}{\varphi}
\newcommand{\Sf}{\mathcal S_{\varphi}}

\newcommand{\XW}{\mathbf X_{\mathbf W}}

\newcommand{\<}{\langle}
\renewcommand{\>}{\rangle}

\newcommand{\xor}{\veebar}

\renewcommand{\pmod}[2][]{\;#1(\text{mod}\; #2#1)}

\title{Weighted sheaves and homology of Artin groups}

\author{Giovanni Paolini}
\address{Giovanni Paolini}
\address{\textup{Scuola Normale Superiore, Piazza dei Cavalieri 7, 56126 Pisa (Italy)}}
\email{giovanni.paolini@sns.it}

\author{Mario Salvetti}
\address{Mario Salvetti}
\address{\textup{Department of Mathematics, University of Pisa, Largo Pontecorvo 5, 56127 Pisa (Italy)}}
\email{salvetti@dm.unipi.it}

\thanks{The final publication in \emph{Algebraic \& Geometric Topology} is available at \url{http://doi.org/10.2140/agt.2018.18.3943}.}

\begin{document}

\begin{abstract}
  In this paper we expand the theory of weighted sheaves over posets, and use it to study the local homology of Artin groups.
  First, we use such theory to relate the homology of classical braid groups with the homology of certain independence complexes of graphs.
  Then, in the context of discrete Morse theory on weighted sheaves, we introduce a particular class of acyclic matchings. Explicit formulas for the homology of the corresponding Morse complexes are given, in terms of the ranks of the associated incidence matrices.
  We use such method to perform explicit computations for the new affine case $\smash{\tilde C_n}$, as well as for the cases $A_n$, $B_n$ and $\smash{\tilde{A}_n}$ (which were already done before by different methods).
\end{abstract} 

\maketitle

\tableofcontents

\section{Introduction}

The topological theory of Artin groups started in the 60's with classical braid groups \cite{fox1962braid, arnold1969cohomology, arnold1970some}, and was broadened to general Artin groups in the 70's \cite{fuchs1970cohomologies, brieskorn1972artin, deligne1972immeubles, brieskorn1973groupes, cohen1976homology, weinstein1978cohomologies, goryunov1978cohomologies}.
It has received much attention in connection with problems in singularity theory, homotopy theory, hyperplane arrangements, and combinatorics (see \cite{paris2012k} for some references), up to recent applications of some special cases to conjectures about hyperbolic groups \cite{agol2013virtual}.

Thanks to a good topological model for the $K(\pi,1)$ space of Artin groups \cite{salvetti1987topology, salvetti1994homotopy}, a combinatorial free resolution for these groups was found \cite{de1996cohomology}, and several (co)homology computations were carried out \cite{de1999arithmetic, de2001arithmetic, callegaro2004integral, callegaro2005cohomology, callegaro2006homology, callegaro2008cohomology, callegaro2008cohomology2, callegaro2010k} (equivalent resolutions also appeared in \cite{squier1994homological} and \cite{ozornova2017discrete, paolini2017classifying}).
The computational methods used in these papers are essentially based on filtering the algebraic complex and using the associated spectral sequence. 

In \cite{salvetti2013combinatorial} a more combinatorial method of calculation was introduced, based on the application of discrete Morse theory to a particular class of sheaves over posets (called \emph{weighted sheaves}).
It is interesting to notice that similar ideas were considered later in \cite{curry2016discrete}, even if there the authors were mainly interested in computational aspects.

In this paper we expand the theory of weighted sheaves. 
First, we observe  a surprising relationship between some twisted (co)homology of the classical braid groups and the (co)homology of certain \emph{independence complexes}, which in their simplest form have already been studied in a combinatorial context 
(see for example \cite{babson2007proof, kozlov2007combinatorial, 
engstrom2009complexes}).
This relationship is obtained in an indirect way, after localization.
The exact formula we are going to prove is the following (it was announced in \cite{salvetti2015some} without proof).

\begin{customthm}{\ref{maintheorem}}
  \[ H_*(\Br_{n+1};R)_{\varphi_d} \cong \tilde H_{*-d+1}\left(\Ind_{d-2}(A_{n-d});\frac{R}{(\varphi_d)}\right). \]
\end{customthm}
\noindent Here on the left we have the homology with local coefficients of the classical braid group and on the right we have the homology of some independence complex with trivial coefficients (we indicate by $\varphi_d$ the $d$-th cyclotomic polynomial).

As said before, our method is based on the notions of weighted sheaves over posets and of \emph{weighted matchings}, which are interesting objects by themselves.
In our situation, the $d$-weight $v_d(\sigma)$ of a simplex $\sigma$ is the maximal power of the $d$-th cyclotomic polynomial which divides the Poincar\'e polynomial of the (finite) parabolic subgroup generated by the vertices of the simplex.
We allow matchings between simplices of the same weight.
Applying discrete Morse theory, one obtains a Morse complex which still computes the local homology.

Then, by applying these methods to other Coxeter graphs (in particular to finite and affine type cases), we come up with the notion of \emph{precise matching}.
In all the cases we consider, it is possible to produce acyclic weighted matchings such that the Morse complex turns out to have a very nice property: the boundary of a critical $p$-simplex $\sigma$ has a non-zero coefficient along a critical $(p-1)$-simplex $\tau$ if and only if $v_d(\sigma)=v_d(\tau)+1.$ 
Then the homology of the Morse complex reduces to the computation of the ranks of the incidence matrices among critical simplices (Theorems \ref{thm:homology-phi-component} and \ref{thm:homology-L}).
We show that the existence of precise matchings can also be interpreted in terms of second-page collapsing of a spectral sequence which is naturally associated to the weighted sheaf (Proposition \ref{prop:precise-matching-spectral-sequence}).

As we discuss at the beginning of Section \ref{sec:precise-matchings-artin-groups}, the existence of precise matchings for a certain Artin group has strong implications for its homology.
In particular we want to highlight the following non-trivial result.

\begin{customthm}{\ref{cor:squarefree-torsion}}
  Let $\GW$ be an Artin group (corresponding to some finitely generated Coxeter group $\W$) that admits a $\varphi$-precise matching for all cyclotomic polynomials $\varphi=\varphi_d$ (with $d\geq 2$). Then the homology $H_*(\XW;R)$ does not have $\varphi^k$-torsion for $k\geq 2$.
\end{customthm}

\noindent Here $\XW$ (introduced in Section \ref{sec:weighted-sheaves-artin-groups}) is a finite CW-complex which is a deformation retract of the \emph{orbit space} associated to $\W$, and $\pi_1(\XW)=\GW$.
When $\W$ is finite and in a few other cases $\XW$ it is known to be a $K(\GW,1)$, whereas this property is only conjectured in general \cite{deligne1972immeubles, brieskorn1973groupes, okonek1979dask, van1983homotopy, hendriks1985hyperplane, salvetti1987topology, salvetti1994homotopy, charney1995k, callegaro2010k, paris2012k}.

Theorem \ref{cor:squarefree-torsion} can be seen as a result about the homology of the infinite cyclic covering of $\XW$ and its monodromy.
It is known that the conclusion of this theorem holds for all Artin groups of finite type, due to geometric reasons (see Remark \ref{rmk:squarefree-torsion-finite-type}).
Our construction provides a new combinatorial condition that applies to a wider class of Artin groups (including, among others, the affine groups of type $\smash{\tilde A_n}$ and $\smash{\tilde C_n}$).

The methods we develop allow explicit and complete computations, in general in a more direct way compared to other known methods. We use them to compute the twisted homology in the case of affine Artin groups of type $\smash{\tilde{C}_n}$, obtaining the following result.

\begin{customthm}{\ref{thm:homology-tCn}}
  Let $\GW$ be an Artin group of type $\tilde C_n$.
  Then the $\varphi_d$-primary component of $H_*(\GW;R)$ is trivial for $d$ odd, and for $d$ even is as follows:
  \[ H_m(\GW;R)_{\varphi_d} \cong
    \begin{cases}
      \left( R/(\varphi_d) \right)^{\oplus m+k-n+1} & \text{if $n-k \leq m \leq n-1$}, \\
      0 & \text{otherwise},
    \end{cases}
  \]
  where $n=k\frac d2 + r$.
\end{customthm}

We also carry out complete computations in the cases $A_n$, $B_n$ and $\smash{\tilde A_n}$: these cases were already done before by different methods \cite{frenkel1988cohomology, de1999arithmetic, de2001arithmetic, callegaro2008cohomology}, but we need them to perform computations of more complicated cases.

In a recent preprint \cite{paolini2017local}, the first author succeeded in constructing precise matchings for all remaining Artin groups of finite and affine type.
Therefore precise matchings seem to be a suitable and effective tool for studying the homology of Artin groups.

The paper is structured as follows.
In Section \ref{sec:constructions} we recall the most important definitions and results of \cite{salvetti2013combinatorial}. In particular we introduce the general framework of weighted sheaves over posets, and then explain how it can be used to compute the homology of Artin groups.
In Section \ref{sec:braid-groups} we study the case of braid groups (i.e.\ Artin groups of type $A_n$), and relate their homology with that of certain independence complexes.
In Section \ref{sec:precise-matchings} we introduce precise matchings and derive a general formula for the homology of the Morse complex.
In Section \ref{sec:precise-matchings-artin-groups} we apply the theory developed in Section \ref{sec:precise-matchings} to the case of Artin groups. We construct precise matchings for Artin groups of type $A_n$, $B_n$, $\smash{\tilde A_n}$ and $\smash{\tilde C_n}$, and use such matchings to explicitly compute the homology.

\section{Preliminaries}
\label{sec:constructions}
In this section we are going to recall some definitions and constructions from \cite{salvetti2013combinatorial} (see also \cite{moroni2012some}).

\subsection{Weighted sheaves over posets}
Let $(P, \preceq)$ be a finite poset.

\begin{definition}
  Define a \emph{sheaf of rings} over $P$  (or a \emph{diagram of rings} over $P$) as a collection
  \[ \{A_x \mid x\in P\} \]
  of commutative rings together with a collection of ring homomorphisms
  \[ \{\rho_{x,y} \colon A_y \to A_x \mid x\preceq y\} \]
  satisfying
  \[ \rho_{x,x} = \id_{A_x}; \]
  \[ x\preceq y\preceq z\ \Rightarrow \ \rho_{x,z}=\rho_{x,y}\ \rho_{y,z}. \]
\end{definition}

In other words, a sheaf of rings over $P$ is a functor from $P^{\text{op}}$ to the category of commutative rings.

Fix a PID $R$; usually we take $R=\Q[q^{\pm 1}]$.
The divisibility relation
\[ p_1 \mid p_2\ \Leftrightarrow \ (p_1)\supseteq (p_2) \]
gives $R$ the structure of a small category, and to $R/R_*$ the structure of a poset (where $R_*$ is the group of units in $R$) with minimum element the class of the units and maximum element $0$.
Any functor $\w \colon (P,\preceq) \to (R,\mid)$, which maps every $x\in P$ to some $\w(x) \in R \setminus \{0 \}$,
defines a sheaf over $P$ by the collection
\[ \{ R/(\w(x)) \mid x\in P \} \]
and
\[ \{ i_{x,y}\colon R/(\w(y)) \to R/(\w(x)) \mid x \preceq y \} \]
where $i_{x,y}$ is induced by the identity of $R$.

\begin{definition}
  Given a poset $P$, a PID $R$ and a morphism $\w \colon (P,\preceq) \to (R,\mid)$ as above, we call the triple $(P,R,\w)$  a \emph{weighted sheaf} over $P$ and the coefficients $\w(x)$ the \emph{weights} of the sheaf.
\end{definition}

\begin{remark}
  Consider the \emph{poset topology} over $P$, where a basis for the open sets is given by the upper intervals $\mathcal B = \{ P_{> p},\ p\in P\}$. Let $(P,R,\w)$ be a weighted sheaf. Then one can see $\w$ as a functor from $(\mathcal B,\supseteq)$ to $(R,\mid)$ and thus a weighted sheaf defines a sheaf in the usual sense.
  \label{truesheaf}
\end{remark}

From now on our poset will be a simplicial complex $K$ defined over a finite set $S$, with the partial ordering
\[ \sigma\preceq \tau \, \Leftrightarrow \, \sigma \subseteq \tau. \]
We adopt the convention that $K$ contains the empty simplex $\varnothing$.
A weighted sheaf over $K$ is given by assigning to each simplex $\sigma\in K$ a weight $\w(\sigma)\in R$, with
\[ \sigma \preceq \tau \, \Rightarrow \,  \w(\sigma)\mid \w(\tau). \]
Let $C^0_*(K; R)$ be the $1$-shifted standard algebraic complex computing the simplicial homology of $K$, i.e.
\[ C^0_k(K; R) = \bigoplus_{\substack{
    \sigma \in K\\
    |\sigma|=k}} R\,e_{\sigma}^0 \]
where  $e_{\sigma}^0$ is a generator associated to a given orientation of $\sigma$ ($C^0_0(K; R)=R \,e_{\varnothing}^0$).
The boundary is given by
\[ \partial^0(e_{\sigma}^0) \, = \, \sum_{\mathclap{|\tau|=k-1}} \, [\sigma:\tau]\ e_{\tau}^0, \]
where $[\sigma :\tau ]$ is the incidence number (which is equal to $\pm 1$ if $\tau\prec \sigma$, and otherwise is equal to $0$).

\begin{definition}
  The \emph{weighted complex} associated to the weighted sheaf $(K,R,\w)$ is the algebraic complex $L_* = L_*(K)$ defined by
  \[ L_k = \bigoplus_{|\sigma|=k}
    \frac{R}{(w(\sigma))} \, \bar{e}_{\sigma}, \]
  with boundary $\partial\colon L_k \to L_{k-1}$ induced by $\partial^0$:
  \[ \partial(a_{\sigma}\bar{e}_{\sigma}) = \sum_{\tau\prec\sigma} \,
    [\sigma:\tau] \, i_{\tau,\sigma}(a_{\sigma}) \, \bar{e}_{\tau}. \]
  \label{def:weightedcomplex}
\end{definition}

There is a natural projection $\pi\colon C^0_*(K;R) \to L_*$ which maps a generator $e_\sigma$ in $C^0_*$ to the generator $\bar e_\sigma$ in $L_*$.

\begin{remark}  The diagram $\{ R / (w(\sigma)) \mid  \sigma\in K \}$ also defines a sheaf over the poset $K$ in the sense of Remark \ref{truesheaf}. The sheaf cohomology associated to the open covering given by the upper segments coincides with the homology of the weighted sheaf. 
\end{remark}

\subsection{Decomposition and filtration}

Let $\S=(K,R,\w)$ be a weighted sheaf.
For any irreducible $\ph\in R,$ we define the $\ph$-primary component $\Sf=(K,R,\w_{\ph})$ of the weighted sheaf $\S$ by setting
\[ w_{\ph}(\sigma) = \ph^{v_{\ph}(\sigma)}, \]
where
\[ v_{\ph}(\sigma) = \text{maximal $r$ such that $\ph^r$ divides $\w(\sigma)$}. \]
Since $R$ is a PID and $w(\sigma) \neq 0$, such a maximal value exists.
Notice that $\Sf$ is a weighted sheaf. 
The weighted complex $(L_*)_\varphi$ associated to $\Sf$ is called the $\ph$-primary component of the weighted complex $L_*$.
In degree $k$ one has:
\[ (L_k)_\varphi = \bigoplus_{|\sigma|=k} \,
  \frac{R}{(\ph^{v_{\ph}(\sigma)})} \, \bar{e}_{\sigma}. \]

The complex $(L_*)_\varphi$ has a natural increasing filtration by the following subcomplexes:
\[ F^s (L_*)_\varphi = \bigoplus_{v_{\ph}( \sigma)\leq s} \frac{R}{(\ph^{v_{\ph}( \sigma)})} \, \bar{e}_{\sigma}. \]
This filtration is associated to an increasing filtration of the simplicial complex $K$:
\[ K_{\ph,s} = \{ \sigma \in K \mid v_{\ph} (\sigma)\leq s \}. \]
Then $F^s(L_*)_\varphi$ is the weighted complex associated to the weighted sheaf
\[ (K_{\ph,s}, \,R, \,\w_{\ph}|_{K_{\ph,s}}). \]

\begin{theorem}[\cite{salvetti2013combinatorial}]
  Let $(K,R,\w)$ be a weighted sheaf, with associated weighted complex $L_*$. 
  For any irreducible $\ph\in R$, there exists a spectral sequence
  \[ E^r_{p,q} \ \Rightarrow \ H_*((L_*)_\varphi) \]
  that abuts to the homology of the $\ph$-primary component of the associated algebraic complex $L_*$.
  Moreover the $E^1$-term
  \begin{equation*}
    E^1_{p,q} = H_{p+q}(F^p/F^{p-1})\cong H_{p+q}(K_{\ph,p}, K_{\ph,p-1}; R/(\ph^p))
    \label{spectral}
  \end{equation*}
  is isomorphic to the relative homology with \emph{trivial coefficients} of the simplicial pair $(K_{\ph,p}, K_{\ph,p-1})$.
  \label{thm:spectral-sequence}
\end{theorem}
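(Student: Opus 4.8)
The plan is to build the spectral sequence as the one associated to the filtration $F^\bullet (L_*)_\varphi$ introduced just above the statement, and then identify the $E^1$-page. Since $(L_*)_\varphi$ is a chain complex equipped with an increasing filtration by subcomplexes $F^s(L_*)_\varphi$, standard homological algebra produces a convergent spectral sequence $E^r_{p,q} \Rightarrow H_*((L_*)_\varphi)$ with $E^0_{p,q} = (F^p/F^{p-1})_{p+q}$ and $E^1_{p,q} = H_{p+q}(F^p/F^{p-1})$; convergence is immediate because the filtration is bounded (the poset $K$ is finite, so $v_\varphi$ takes finitely many values, and $F^s = 0$ for $s<0$ while $F^s = (L_*)_\varphi$ for $s$ large). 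So the only real content is the computation of the quotient complex $F^p/F^{p-1}$ and the identification of its homology.

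The key observation is that $F^p(L_*)_\varphi$ is, by the remark preceding the theorem, the weighted complex of the weighted sheaf $(K_{\varphi,p},R,\w_\varphi|_{K_{\varphi,p}})$, and likewise $F^{p-1}(L_*)_\varphi$ is the weighted complex of $(K_{\varphi,p-1},R,\w_\varphi|_{K_{\varphi,p-1}})$. In degree $k$ the quotient $(F^p/F^{p-1})_k$ is therefore the direct sum of the summands $R/(\varphi^{v_\varphi(\sigma)})$ over simplices $\sigma$ with $v_\varphi(\sigma) = p$ exactly, i.e.\ over the simplices of $K_{\varphi,p}$ that are not in $K_{\varphi,p-1}$; and for each such $\sigma$ the summand is $R/(\varphi^p)$. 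Thus as a graded module $F^p/F^{p-1}$ is the simplicial chain complex of the pair $(K_{\varphi,p},K_{\varphi,p-1})$ with constant coefficients in $R/(\varphi^p)$, up to the global degree shift by one built into the convention $C^0_*$. The point requiring a short check is that the induced boundary also matches: in $\partial(a_\sigma \bar e_\sigma) = \sum_{\tau \prec \sigma} [\sigma:\tau]\, i_{\tau,\sigma}(a_\sigma)\, \bar e_\tau$, whenever both $\sigma$ and $\tau$ survive to the quotient we have $v_\varphi(\tau) = v_\varphi(\sigma) = p$ (monotonicity gives $v_\varphi(\tau) \le p$, and $\tau \notin K_{\varphi,p-1}$ forces $v_\varphi(\tau) = p$), so $i_{\tau,\sigma}$ is the identity map $R/(\varphi^p) \to R/(\varphi^p)$ and the boundary is literally the simplicial boundary with $R/(\varphi^p)$-coefficients. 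Taking homology then gives
\[ E^1_{p,q} = H_{p+q}(F^p/F^{p-1}) \cong H_{p+q}\!\left(K_{\varphi,p}, K_{\varphi,p-1};\, R/(\varphi^p)\right), \]
the degree bookkeeping ($p+q$ rather than $p$) being exactly what the filtration-spectral-sequence formalism dictates together with the single overall shift in $C^0_*$.

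The main obstacle I anticipate is not conceptual but bookkeeping: getting the degree indexing and the one-step shift in $C^0_*$ to line up consistently with the convention $E^r_{p,q} \Rightarrow H_{p+q}$, and making sure that "trivial coefficients" is justified, i.e.\ that the coefficient system on the pair really is constant rather than merely locally constant. This is handled by the argument above — on each relevant simplex the stalk is the fixed ring $R/(\varphi^p)$ and every restriction map between two surviving simplices is the identity — but it is the place where one must be careful. Everything else (existence, convergence, $E^0$ and $E^1$ descriptions) is the standard machinery of a bounded filtered complex, so I would state it quickly and cite the general fact, spending the writing effort on the identification of $F^p/F^{p-1}$ with the shifted relative simplicial chain complex.
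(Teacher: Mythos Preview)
Your proposal is correct and is exactly the standard argument. Note, however, that the paper does not actually prove this theorem: it appears in the Preliminaries section and is simply quoted from \cite{salvetti2013combinatorial}, with no proof given. Your sketch is precisely what that proof amounts to --- the spectral sequence of a bounded increasing filtration, together with the observation that on $F^p/F^{p-1}$ every surviving stalk is $R/(\varphi^p)$ and every restriction map is the identity, so the quotient is the (shifted) relative simplicial chain complex with constant coefficients.
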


\subsection{Discrete Morse theory for weighted complexes}

Given $x$, $y$ in a poset $(P, \prec)$, the notation $x\lhd y$ means that $y$ \emph{covers} $x$, i.e.\ $x\prec y$ and there is no other $z\in P$ such that $x\prec z\prec y$.

Recall the basic facts of discrete Morse theory (see for example \cite{forman1998morse, kozlov2007combinatorial}).
A \emph{matching} in a poset $P$ is a set $\M\subseteq P\times P$ such that $(x,y)\in M \Rightarrow x\lhd y$ and each $x\in P$ belongs to at most one pair of $\M$.
An \emph{alternating path} is a sequence
\begin{equation*}\label{altpath}
y_0\rhd x_1\lhd y_1\rhd x_2\lhd y_2\rhd  \dots \rhd x_{m}\lhd y_m(\rhd x_{m+1})
\end{equation*}
such that each pair $x_i\lhd y_i$ belongs to $\M$ and no pair $x_i\lhd y_{i-1}$ belongs to $\M$. 
A \emph{cycle} is a closed alternating path with $y_0=y_m$.
An \emph{acyclic matching} over $P$ is a matching with no cycles. 
We are going to describe a variant of discrete Morse theory which is suitable for our situation.

\begin{definition}
  A \emph{weighted acyclic matching} on a weighted sheaf $(P,R,\w)$ over $P$ is an acyclic matching $\M$ on $P$ such that 
  \[ (x, y) \in \M\ \Rightarrow\ (\w(x)) = (\w(y)) \]
  (in \cite{salvetti2013combinatorial} we required $\w(x)=\w(y)$, but the above generalization does not change the results).
\end{definition}

The standard theory generalizes as follows.
Let $\S=(K,R,\w)$ be a weighted sheaf over a finite simplicial complex $K$, and let $\mathcal M$ an acyclic weighted matching.
A \emph{critical simplex} of $K$ is a simplex $\sigma$ which does not belong to any pair of $\mathcal M$.
The following definition is equivalent to \cite[Definition 3.3]{salvetti2013combinatorial}.

\begin{definition}
The \emph{Morse complex} of $\S$ with respect to $\M$ is defined as the torsion complex
\[ L^{\mathcal M}_* \,=
\bigoplus_{\sigma \text{ critical} } \frac{R}{(\w(\sigma))} \ \bar{e}_{\sigma} \]
with boundary
\[ \de^\M(\bar e_\sigma) = \, \sum_{\mathclap{\substack{
    \tau \text{ critical}\\
    |\tau| \,=\, |\sigma| - 1}}} \; [\sigma:\tau]^\M \, \bar e_\tau, \]
  where $[\sigma:\tau]^\M \in \Z$ is given by the sum over all alternating paths
  \[ \sigma \vartriangleright \tau_1 \vartriangleleft \sigma_1 \vartriangleright \tau_2 \vartriangleleft \sigma_2 \vartriangleright \dots \vartriangleright \tau_m \vartriangleleft \sigma_m \vartriangleright \tau \]
  from $\sigma$ to $\tau$ of the quantity
  \[ (-1)^m [\sigma:\tau_1][\sigma_1:\tau_1][\sigma_1:\tau_2][\sigma_2:\tau_2] \cdots [\sigma_m:\tau_m][\sigma_m:\tau]. \]
  The boundary map $\de^\M$ is extended by $R$-linearity, where some care should be taken since each $\bar e_\tau$ lives in a component with a possibly different torsion ($\de^\M$ is the same as the boundary map of \cite{salvetti2013combinatorial}).
  \label{def:morse-complex}
\end{definition}

\begin{theorem}[\cite{salvetti2013combinatorial}]
  Let $\mathcal S=(K,R,\w)$ be a weighted sheaf over a simplicial complex $K$. Let $\M$ be an acyclic matching for $\S$.
  Then there is an isomorphism 
  \[ H_*(L_*,\partial) \cong H_*(L^{\mathcal M}_*,\partial^{\mathcal M}) \]
  between the homology of the weighted complex $L_*$ associated to $\S$ and the homology of the Morse complex $L^{\M}_*$ of $\S$.
  \label{thm:morsecomplex}
\end{theorem}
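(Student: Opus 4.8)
The plan is to deduce the theorem from the standard machinery of \emph{algebraic} discrete Morse theory — iterated Gaussian elimination on a finite based chain complex of $R$-modules — the point being that the weighted structure enters only through the matching condition, and that $(\w(x)) = (\w(y))$ is exactly what is needed to run the argument over the torsion modules $R/(\w(\sigma))$ rather than over free modules. This is the theorem of \cite{salvetti2013combinatorial} under the stronger hypothesis $\w(x) = \w(y)$; below I recall the shape of the argument and indicate why the weaker hypothesis suffices.

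First I would treat a matching consisting of a single pair, $\M = \{(x,y)\}$ with $x \lhd y$ and $k = |y|$. Since $x \preceq y$ forces $\w(x) \mid \w(y)$, the matching condition gives $(\w(x)) = (\w(y))$, hence $i_{x,y}\colon R/(\w(y)) \to R/(\w(x))$ is an isomorphism; as $[y:x] = \pm 1$, the component $R/(\w(y))\,\bar e_y \to R/(\w(x))\,\bar e_x$ of $\partial\colon L_k \to L_{k-1}$ is an isomorphism of $R$-modules. The Gaussian-elimination lemma for chain complexes then yields a chain homotopy equivalence between $L_*$ and the complex obtained by deleting the two summands $R/(\w(y))\,\bar e_y$ and $R/(\w(x))\,\bar e_x$ and replacing, for critical $\sigma$ with $|\sigma| = k$ and critical $\tau$ with $|\tau| = k-1$, the $\bar e_\tau$-coefficient of $\partial\bar e_\sigma$ by $[\sigma:\tau] - [\sigma:x]\,[y:x]\,[y:\tau]$ (each coefficient read through the relevant maps $i_{\cdot,\cdot}$, and using $[y:x]^{-1} = [y:x]$). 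On the other hand, for a one-pair matching the only alternating paths from $\sigma$ to $\tau$ are the trivial one, contributing $[\sigma:\tau]$, and the single detour $\sigma \vartriangleright x \vartriangleleft y \vartriangleright \tau$, contributing $(-1)^1 [\sigma:x][y:x][y:\tau]$ — the matched edge cannot be traversed twice. So the modified differential is precisely $\partial^{\M}$, and $H_*(L_*,\partial) \cong H_*(L^{\M}_*,\partial^{\M})$ in this case.

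For a general acyclic matching $\M = \{(x_1,y_1),\dots,(x_N,y_N)\}$ I would induct on $N$, working in the wider category of finite based torsion complexes $\bigoplus_\sigma R/(\w(\sigma))\,\bar e_\sigma$, since a single cancellation already leaves the class of weighted complexes of simplicial complexes. Acyclicity of $\M$ lets one order the pairs so that cancelling them one at a time — say $(x_1,y_1)$ first — produces, at each stage, a based torsion complex carrying the inherited matching, which remains acyclic; here one uses the standard fact that reversing the matched covering relations yields a directed acyclic graph. Applying the single-pair step $N$ times and composing the homotopy equivalences connects $L_*$ to $L^{\M}_*$, provided one verifies that the accumulated correction to the differential equals the path-sum coefficient $[\sigma:\tau]^{\M}$.

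That verification is the only substantial work. It is a purely combinatorial bookkeeping: every alternating path from $\sigma$ to $\tau$ decomposes uniquely according to the ordered sequence of matched pairs it visits, and the sign $(-1)^m$ together with the incidence factors $[\sigma:\tau_1][\sigma_1:\tau_1]\cdots[\sigma_m:\tau]$ are exactly what successive Gaussian eliminations accumulate; acyclicity guarantees the sum is finite. I expect this — together with checking that $\partial^{\M}$ is well defined, $R$-linear, and squares to zero as it passes between torsion modules of different orders — to be the main obstacle. However it is carried out essentially verbatim in \cite{salvetti2013combinatorial}, where the hypothesis $\w(x) = \w(y)$ is used only to know that $i_{x,y}$ is invertible; since $(\w(x)) = (\w(y))$ already supplies this, nothing else in that proof changes.
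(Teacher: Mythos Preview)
The paper does not give a proof of this statement: it is quoted from \cite{salvetti2013combinatorial} in the preliminaries section and carries no argument beyond the citation. Your sketch via iterated Gaussian elimination is the standard algebraic-Morse-theory proof, and your observation that the weakened hypothesis $(\w(x))=(\w(y))$ suffices because only the invertibility of $i_{x,y}$ is used is exactly the point the authors make in their parenthetical remark after the definition of weighted acyclic matching; so your proposal is correct and matches what the cited reference does.
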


\begin{remark}
If we forget about the weights, the matching $\M$ is in particular an acyclic matching for the simplicial complex $K$.
Therefore the algebraic complex $C^0_*(K;A)$, which computes the (shifted and reduced) simplicial homology of $K$ with coefficients in some ring $A$, admits a ``classical'' algebraic Morse complex
\[ C^0_*(K;A)^\M = \, \bigoplus_{\mathclap{\sigma \text{ critical}}} \, A \, \bar e_\sigma \]
with boundary map
\[ \delta^\M(\bar e_\sigma) = \, \sum_{\mathclap{\substack{
    \tau \text{ critical}\\
    |\tau| \,=\, |\sigma| - 1}}} \; [\sigma:\tau]^\M \, \bar e_\tau. \]
\end{remark}

\begin{remark}
  Set $C^0_* = C^0_*(K;R)$ and let $\pi\colon C^0_* \to L_*$ be the natural projection.
  The composition with the projection $L_* \to (L_*)_\varphi$, for some fixed irreducible element $\varphi\in R$, yields a natural projection $\pi_\varphi\colon C^0_* \to (L_*)_\varphi$.
  Similarly at the level of Morse complexes we have the projection $\bar \pi_\varphi \colon (C^0_*)^\M \to (L_*)_\varphi^\M$ which sends a generator $\bar e_\sigma$ in $(C^0_*)^\M$ to the generator $\bar e_\sigma$ in $\smash{(L_*)_\varphi^\M}$.
  Then, by construction, the maps induced in homology by the projections $\pi_\varphi$ and $\bar \pi_\varphi$ commute with the Morse isomorphisms of \cite{kozlov2007combinatorial} and \cite{salvetti2013combinatorial}:
  \begin{center}
  \begin{tikzpicture}
    \matrix (m) [matrix of math nodes,row sep=3em,column sep=3em,minimum width=1em]
    {  H_*(C^0_*) & H_*((L_*)_\varphi) \\
      H_*((C^0_*)^\M) & H_*((L_*)^\M_\varphi). \\};
    \path[-stealth]
      (m-1-1) edge [->] node [above] {\scriptsize $(\pi_\varphi)_*$} (m-1-2)
      (m-1-1) edge [->] node [left] {\scriptsize $\cong$} (m-2-1)
      (m-1-2) edge [->] node [left] {\scriptsize $\cong$} (m-2-2)
      (m-2-1) edge [->] node [above] {\scriptsize $(\bar\pi_\varphi)_*$} (m-2-2);
  \end{tikzpicture}
  \end{center}
  \label{rmk:projections}
\end{remark}

Let $\varphi\in R$ be an irreducible element, and let $\M$ be a weighted acyclic matching on $\Sf$.
Then the filtration on the weighted complex induces a filtration on the Morse complex:
\[ F^p (L_*)_\varphi^\M \, = \bigoplus_{
  \substack{
      \sigma \text{ critical}\\
      v_{\ph}(\sigma)\leq p
  }}
  \frac{R}{(\ph^{v_{\ph}(\sigma)})} \, \bar{e}_{\sigma}. \]
Consider also the following quotient complex:
\[ \mathcal F^p (L_*)_\varphi^\M \,=\, F^p (L_*)_\varphi^\M \, / \, F^{p-1} (L_*)_\varphi^\M \, \cong \bigoplus_{
\substack{
    \sigma \text{ critical}\\
    v_{\ph}(\sigma) = p
}}
\frac{R}{(\ph^{p})} \ \bar{e}_{\sigma}. \]

\begin{theorem}[\cite{salvetti2013combinatorial}]
  Let $\S$ and $\varphi$ be as above and let $\M$ be an acyclic matching for $\Sf$.
  Then the $E^1$-page of the spectral sequence of Theorem \ref{thm:spectral-sequence} is identified with
  \[ E^1_{p,q} \cong H_{p+q}(\F^p (L_*)_\varphi^\M), \]
  where $(L_*)_\varphi^\M$ is the Morse complex of $\S_{\varphi}.$ 
  The differential
  \[ d^1_{p,q}\colon E^1_{p,q} \to E^1_{p-1,q} \]
  is induced by the boundary of the Morse complex, and thus it is also computed by using alternating paths.
  \label{thm:spectral-sequence-morse-complex}
\end{theorem}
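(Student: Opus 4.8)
The plan is to present the filtered weighted complex $(L_*)_\varphi$ (with filtration $F^\bullet$), the Morse reduction, and the filtered Morse complex $(L_*)^\M_\varphi$ as the constituents of a single \emph{filtration-preserving} chain homotopy equivalence, and then simply read off the $E^1$-page of the spectral sequence of Theorem~\ref{thm:spectral-sequence}.

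First I would record the compatibility of $\M$ with the filtration. Since $v_\varphi$ is monotone along $\preceq$, each $K_{\varphi,s}$ is a subcomplex; and because $\M$ is a \emph{weighted} acyclic matching on $\S_\varphi$, every pair $(x,y)\in\M$ satisfies $v_\varphi(x)=v_\varphi(y)$, so its two members lie in the same filtration layer. Hence $\M$ restricts to an acyclic matching $\M_s$ on $K_{\varphi,s}$ for every $s$ (acyclicity is inherited from $\M$), whose critical simplices are precisely the critical simplices of $\M$ lying in $K_{\varphi,s}$; the weighted complex of $(K_{\varphi,s},R,\w_\varphi|_{K_{\varphi,s}})$ is $F^s(L_*)_\varphi$ and its Morse complex is $F^s(L_*)^\M_\varphi$. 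The key consequence for an alternating path $\sigma\vartriangleright\tau_1\vartriangleleft\sigma_1\vartriangleright\dots\vartriangleleft\sigma_m\vartriangleright\tau$ is that monotonicity of $v_\varphi$ together with $v_\varphi(\tau_i)=v_\varphi(\sigma_i)$ forces $v_\varphi(\tau)\le\dots\le v_\varphi(\sigma_1)=v_\varphi(\tau_1)\le v_\varphi(\sigma)$; so every alternating path is non-increasing in $v_\varphi$, and in particular the only ones contributing to the boundary of the associated graded stay inside a single layer.

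Next I would run the proof of Theorem~\ref{thm:morsecomplex} (following \cite{salvetti2013combinatorial, kozlov2007combinatorial}) while keeping track of the filtration: build $\M$ one matched pair at a time along a linear order compatible with acyclicity, and observe via the previous paragraph that each elementary collapse modifies a boundary $\partial\sigma$ only by terms supported on faces with $v_\varphi\le v_\varphi(\sigma)$. Thus the resulting chain homotopy equivalence $\phi\colon(L_*)_\varphi\to(L_*)^\M_\varphi$, a homotopy inverse for it, and the connecting chain homotopies are all degree-$0$ and filtration-preserving, and $\phi$ restricts for each $s$ to the Morse reduction $F^s(L_*)_\varphi\to F^s(L_*)^\M_\varphi$, which is a quasi-isomorphism by Theorem~\ref{thm:morsecomplex}. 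A degree-$0$ filtration-preserving chain homotopy equivalence induces a chain homotopy equivalence on each associated graded piece; hence $\phi$ induces, compatibly with the connecting maps, isomorphisms $H_{p+q}(F^p/F^{p-1})\cong H_{p+q}(\F^p(L_*)^\M_\varphi)$, i.e.\ an isomorphism of the two spectral sequences from $E^1$ onward. Since $E^1_{p,q}=H_{p+q}(F^p/F^{p-1})$ by Theorem~\ref{thm:spectral-sequence}, this gives the claimed identification $E^1_{p,q}\cong H_{p+q}(\F^p(L_*)^\M_\varphi)$, and under it $d^1_{p,q}$ corresponds to the differential on $\F^\bullet(L_*)^\M_\varphi$ induced by $\partial^\M$, which by Definition~\ref{def:morse-complex} is a sum over alternating paths (all of which lie in a single layer, by the previous paragraph).

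The step I expect to be the main obstacle is exactly the bookkeeping just sketched: checking carefully that the Morse flow/collapse maps never raise $v_\varphi$ (this is precisely where the weighted-matching hypothesis is used), and that a degree-$0$ filtration-preserving chain homotopy equivalence genuinely induces an isomorphism of spectral sequences from $E^1$ on — not merely on $E^\infty$ — so that $d^1$, and not just the $E^1$-terms, is identified. As a sanity check one may note that $\F^p(L_*)^\M_\varphi$ is, verbatim, the classical algebraic Morse complex of the simplicial pair $(K_{\varphi,p},K_{\varphi,p-1})$ with the constant coefficients $R/(\varphi^p)$ (the torsion is constant inside a layer and only same-layer alternating paths survive), so by classical discrete Morse theory its homology is $\cong H_{p+q}(K_{\varphi,p},K_{\varphi,p-1};R/(\varphi^p))=E^1_{p,q}$; but the filtered chain map is still what is needed to match the differentials.
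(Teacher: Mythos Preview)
The paper does not give its own proof of this theorem: it is stated as a result quoted from \cite{salvetti2013combinatorial}, with no proof environment following. So there is nothing in the present paper to compare your argument against line by line.

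That said, your outline is essentially the argument one finds in \cite{salvetti2013combinatorial}, and it is sound. The crucial observation is exactly the one you isolate: since matched pairs have equal $v_\varphi$ and $v_\varphi$ is monotone along $\preceq$, every alternating path is $v_\varphi$-nonincreasing, so the Morse reduction is a filtration-preserving chain map that restricts to the Morse reduction on each $F^s(L_*)_\varphi$. From there, the cleanest way to finish is slightly lighter than what you wrote: you do not need the homotopies to be filtered. Once you know that the filtered chain map restricts to a quasi-isomorphism on every $F^s$ (which is Theorem~\ref{thm:morsecomplex} applied to each $K_{\varphi,s}$), the five lemma on the long exact sequences of the pairs $(F^p,F^{p-1})$ gives quasi-isomorphisms on each associated graded piece, and a filtered chain map that is a quasi-isomorphism on associated gradeds induces an isomorphism of spectral sequences from the $E^1$-page on, including the differentials. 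Your final sanity check --- that $\F^p(L_*)^\M_\varphi$ is literally the classical Morse complex of the pair $(K_{\varphi,p},K_{\varphi,p-1})$ with constant coefficients $R/(\varphi^p)$ --- is correct and is precisely how one sees the identification with the $E^1$-term of Theorem~\ref{thm:spectral-sequence}.
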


\subsection{Weighted sheaves for Artin groups}
\label{sec:weighted-sheaves-artin-groups}

We use the constructions given above in the context of Artin groups.
Let $(\W,S)$ be a Coxeter system, with $|S|=n$ finite, and let $\Gamma$ be the corresponding Coxeter graph (with $S$ as vertex set).
Recall that $\W$ can always be realized as a group of reflections in some $\R^n$ (for example through the \emph{Tits representation}, see \cite{bourbaki1968elements, vinberg1971discrete}, see also \cite{paris2012k}), so that it has the following naturally associated objects:
\begin{enumerate}[(i)]
\item a hyperplane arrangement in $\R^n$ 
\[ \mathcal A \, =\, \{ H \mid H \text{ is the fixed point set of some reflection in $\W$} \}; \]

\item configuration spaces
\begin{align*}
  &\Y = \left(\interior(U)+i\R^n\right) \setminus \bigcup_{H\in \mathcal A} \ H_{\C}, \\
  &\YW  = \Y/\W
\end{align*}
 where $U=\W\cdot \overline{C_0}$ is the {\it Tits cone} (here $C_0$ is a fixed chamber of the arrangement, and $\overline{C_0}$ is its topological closure);

\item a simplicial complex (defined over the finite set $S$)
\[ K = K_{\W} = \{ \sigma \subseteq S \mid  \text{the parabolic subgroup $\W_\sigma$ generated by $\sigma$ is finite} \}. \]
\end{enumerate}

One can define the \emph{Artin group} $\GW$ of type $\W$ as the fundamental group $\pi_1(\YW)$.
It has a presentation
\[ \GW= \< \, g_s, s\in S \mid g_sg_{s'}g_sg_{s'}\ldots = g_{s'}g_sg_{s'}g_s\ldots \, \> \]
(both the products have $m(s,s')$ factors, where $m(s,s')$ is the order of $ss'$ in $\W$).
Recall also the following results \cite[Theorems 1.4 and 1.8]{salvetti1994homotopy}.

\begin{enumerate}[(i)]
  \item The orbit space $\YW$ deformation retracts onto a \emph{finite} $CW$-complex $\X_{\W}$ given by a union
  \[ Q \,=\, \bigcup_{
    \mathclap{\substack{\sigma\in K_\W}}
    } \, Q_\sigma \]
  of convex polyhedra with explicit identifications of their faces. 
  
  \item Consider the action of the Artin group $\GW$ on the ring $R=\Q[q^{\pm 1}]$ given by
  \[ g_s\ \mapsto \  [\text{multiplication by } -q]\quad \forall s\in S. \]
  Then the homology $H_*(\X_{\W};R)$ is computed by the algebraic complex 
  \[ C_k = \, \bigoplus_{\mathclap{\substack{\sigma \in K_\W\\
      |\sigma|=k}}}
    \, R\, e_\sigma \]
  with boundary
  \[  \partial(e_\sigma) \, = \, \sum_{\mathclap{\substack{
    \tau \lhd \sigma }}}
    \; [\sigma:\tau]\, \frac{\W_\sigma(q)}{\W_\tau(q)}\, e_\tau, \]
  where $\displaystyle\W_\sigma(q) = \sum_{w \in \W_\sigma} q^{l(w)}$ is the Poincaré polynomial of $\W_\sigma$.
\end{enumerate}

\begin{remark}
  The $R$-module structure on $H_*(\XW; R)$ is given by the transformation $\mu_q$ induced by $q$-multiplication.
  If the order of $\mu_q$ is $n,$ then the homology groups decompose into cyclic factors which are either free, of the form $R^k,$ or torsion, of the form $R / (\ph_d)$, where $\ph_d$ is the $d$-th cyclotomic polynomial with $d\mid n$.
  Therefore we are interested in localizing to cyclotomic polynomials.
\end{remark}

\begin{theorem}[\cite{salvetti2013combinatorial}]
  To a Coxeter system $(\W,S)$ we can associate a weighted sheaf $(K,R,\w)$ over the simplicial complex $K=K_{\W}$, by setting $\w(\sigma) = \W_\sigma(q)$.
  Also, for any cyclotomic polynomial $\varphi = \varphi_d$, the map 
  \[ \w_\varphi(\sigma) = \text{maximal power of $\varphi$ which divides $\W_\sigma(q)$} \]
  defines a weighted sheaf $(K,R,\w_\varphi)$ over $K=K_{\W}$.
\end{theorem}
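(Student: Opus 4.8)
The plan is to check, for each of the two maps, the two conditions that define a weighted sheaf: that it takes values in $R\setminus\{0\}$, and that it is monotone for the divisibility relation, i.e.\ $\sigma\preceq\tau \Rightarrow \w(\sigma)\mid\w(\tau)$. Once these are verified, nothing more is needed: in the category $(R,\mid)$ there is at most one morphism between any two objects, so a monotone assignment on objects is automatically a functor $(K,\preceq)\to(R,\mid)$, and the functoriality axioms $\rho_{x,x}=\id$, $\rho_{x,z}=\rho_{x,y}\rho_{y,z}$ hold for free.

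First I would record that $K=K_\W$ really is a simplicial complex containing $\varnothing$: if $\W_\tau$ is finite and $\sigma\subseteq\tau$, then $\W_\sigma\le\W_\tau$ is finite, and $\W_\varnothing$ is trivial. For nonvanishing, observe that $\W_\sigma(q)=\sum_{w\in\W_\sigma}q^{l(w)}$ has constant term $1$, coming from the identity of $\W_\sigma$, so $\W_\sigma(q)\ne 0$ for every $\sigma\in K_\W$.

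The one substantive point is the divisibility $\W_\sigma(q)\mid\W_\tau(q)$ when $\sigma\preceq\tau$. I would deduce it from the standard factorization of Poincaré polynomials along standard parabolic subgroups: letting $W^\sigma_\tau\subseteq\W_\tau$ be the set of minimal-length representatives of the cosets $w\W_\sigma$, every $w\in\W_\tau$ factors uniquely as $w=w^\sigma w_\sigma$ with $w^\sigma\in W^\sigma_\tau$, $w_\sigma\in\W_\sigma$ and $l(w)=l(w^\sigma)+l(w_\sigma)$ (see \cite{bourbaki1968elements}). Summing $q^{l(\cdot)}$ over $\W_\tau$ then yields $\W_\tau(q)=\W_\sigma(q)\cdot\sum_{w\in W^\sigma_\tau}q^{l(w)}$, so $\W_\sigma(q)\mid\W_\tau(q)$ in $R$. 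This proves that $\w(\sigma)=\W_\sigma(q)$ defines a weighted sheaf over $K_\W$.

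For the $\varphi$-primary map, I would simply invoke that $\w_\varphi$ is by definition the $\varphi$-primary component of $\w$ in the sense of the Decomposition and filtration subsection, where it was already observed that this construction turns a weighted sheaf into a weighted sheaf; or, directly: the divisibility just proved forces $v_\varphi(\sigma)\le v_\varphi(\tau)$ and, since $\W_\sigma(q)\ne 0$, this exponent is finite, so $\w_\varphi(\sigma)=\varphi^{v_\varphi(\sigma)}$ is a nonzero element of $R$ dividing $\w_\varphi(\tau)=\varphi^{v_\varphi(\tau)}$. The main (in fact essentially the only) nontrivial ingredient is the parabolic factorization of the Poincaré polynomial; the rest is bookkeeping about the category $(R,\mid)$, so I do not expect any real obstacle here.
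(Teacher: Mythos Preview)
Your argument is correct. The paper does not supply a proof of this theorem at all: it is quoted as a result from \cite{salvetti2013combinatorial}, and the key divisibility $\W_\sigma(q)\mid\W_\tau(q)$ for $\sigma\subseteq\tau$ via the minimal coset representative factorization is exactly the standard justification one would give. Your treatment of the $\varphi$-primary part is also fine, and indeed the paper's own ``Decomposition and filtration'' subsection already records that passing to the $\varphi$-primary component of any weighted sheaf yields a weighted sheaf, so that half is immediate once the first half is established.
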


The homology of the associated weighted complex is strictly related to the homology of $\XW$.
Specifically, set $C_*^0 = C_*^0(K; R)$ and consider the diagonal map
\[ \Delta\colon C_* \to C_*^0, \quad  e_\sigma  \mapsto \W_\sigma(q) \, e_\sigma^0. \]
By the formula for the boundary map it follows that $\Delta$ is an injective
chain-complex homomorphism, so there is an exact sequence of complexes:
\[ 0 \longrightarrow C_* \xlongrightarrow{\Delta} C^0_* \xlongrightarrow{\pi} L_* \longrightarrow 0, \]
where
\[ L_k \, = \, \bigoplus_{\mathclap{\substack{
    \sigma \in K_\W \\
    |\sigma| = k}}}
  \; \frac{R}{(\W_\sigma(q))}\; \bar{e}_\sigma \]
is the quotient complex.
Passing to the associated long exact sequence we get:
\begin{equation}
  \dots \xrightarrow{\pi_*} H_{k+1}(L_*) \rightarrow H_k(C_*) \xrightarrow{\Delta_*} H_k(C_*^0) \xrightarrow{\pi_*} H_k(L_*) \rightarrow H_{k-1}(C_*) \xrightarrow{\Delta_*} \dots
  \label{eq:long-rank1}
\end{equation}
Then the homology of $L_*$ can be used to compute the homology of $C_*$.

The orbit space $\YW$ (and thus the CW-complex $\XW \simeq \YW$) is conjectured to be a classifying space for the Artin group $\GW$ \cite{deligne1972immeubles, brieskorn1973groupes, van1983homotopy, paris2012k}.
This conjecture was proved for Artin groups of finite type \cite{deligne1972immeubles}, for affine Artin groups of type $\tilde A_n$, $\tilde B_n$ and $\tilde C_n$ \cite{okonek1979dask, callegaro2010k}, and for some other families of Artin groups \cite{hendriks1985hyperplane, charney1995k}.
Whenever the conjecture holds (in particular this is true for all the cases we consider in this paper), the homology $H_*(\XW; R)$ coincides with the twisted homology $H_*(\GW; R)$ of the Artin group $\GW$.

\section{Homology of braid groups and independence complexes}
\label{sec:braid-groups}

In this section we are going to show how the twisted homology of braid groups (i.e.\ Artin groups of type $A_n$) is related to the homology of suitable independence complexes.
Recall that a Coxeter graph of type $A_n$ is a linear graph with $n$ vertices, usually labeled $1$, $2$, \dots, $n$ (see Figure \ref{fig:An}), and that the corresponding Artin group is the braid group on $n+1$ strands (which we denote by $\Br_{n+1}$).
With a slight abuse of notation, by $A_n$ we will sometimes indicate the graph itself.

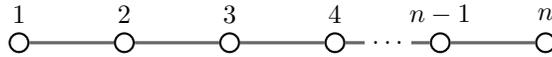
\begin{figure}[htbp]
  \begin{tikzpicture}
\begin{scope}[every node/.style={circle,thick,draw,inner sep=2.5}, every label/.style={rectangle,draw=none}]
  \node (1) at (0.0,0) [label={above,minimum height=13}:$1$] {};
  \node (2) at (1.4,0) [label={above,minimum height=13}:$2$] {};
  \node (3) at (2.8,0) [label={above,minimum height=13}:$3$] {};
  \node (4) at (4.2,0) [label={above,minimum height=13}:$4$] {};
  \node (n-1) at (5.6,0) [label={above,minimum height=13}:$n-1$] {};
  \node (n) at (7.0,0) [label={above,minimum height=13}:$n$] {};
\end{scope}
\begin{scope}[every edge/.style={draw=black!60,line width=1.2}]
  \path (1) edge node {} (2);
  \path (2) edge node {} (3);
  \path (3) edge node {} (4);
  \path (4) edge node [fill=white, rectangle, inner sep=3.0] {$\ldots$} (n-1);
  \path (n-1) edge node {} (n);
\end{scope}
\begin{scope}[every edge/.style={draw=black, line width=3}]
\end{scope}
\begin{scope}[every node/.style={draw,inner sep=11.5,yshift=-4}, every label/.style={rectangle,draw=none,inner sep=6.0}, every fit/.append style=text badly centered]
\end{scope}
\end{tikzpicture}
   \caption{A Coxeter graph of type $A_n$.} 
  \label{fig:An}
\end{figure}

The homology of $\Br_{n+1}$, with coefficients in the representation $R=\Q[q^{\pm 1}]$ described in the previous section, has been computed in \cite{frenkel1988cohomology, de2001arithmetic}.
See also \cite{callegaro2006homology} for coefficients in $\Z[q^{\pm 1}]$ (but we will not address this case).

\begin{theorem}[\cite{frenkel1988cohomology, de2001arithmetic}]
  The $\varphi_d$-primary component of the twisted ho\-mo\-lo\-gy of a braid group is given by
  \[ 
    H_*(\Br_{n+1}; R)_{\varphi_d} =
    \begin{cases}
      R/(\varphi_d) & \text{if $n \equiv 0$ or $-1$ $\pmod{d}$}, \\
      \,0 & \text{otherwise},
    \end{cases}
  \]
  where the non vanishing term is in degree $(d-2)k$ if $n=dk$ or $n=dk-1$.
  \label{teo:simpleind}
\end{theorem}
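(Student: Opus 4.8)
The plan is to run the weighted-sheaf discrete Morse theory of Section~\ref{sec:constructions} directly on the Coxeter graph $A_n$.

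\textbf{Setup and reduction.} For $\W$ of type $A_n$ every parabolic subgroup is finite, so $K=K_\W$ is the full simplex on $S=\{1,\dots,n\}$. I would first record the arithmetic of the weights: decomposing a subset $\sigma\subseteq S$ into its \emph{blocks} (maximal runs of consecutive integers) of sizes $c_1,\dots,c_r$ we have $\W_\sigma\cong S_{c_1+1}\times\cdots\times S_{c_r+1}$, hence $\w(\sigma)=\prod_i[c_i+1]_q!$; since each factor $\tfrac{q^j-1}{q-1}=\prod_{e\mid j,\,e\ge 2}\varphi_e(q)$ is squarefree, for $d\ge 2$ one gets $v_{\varphi_d}(\sigma)=\sum_i\lfloor(c_i+1)/d\rfloor$, so $v_{\varphi_d}(\sigma)=0$ exactly when every block of $\sigma$ has at most $d-2$ elements. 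By the Chinese Remainder Theorem the weighted complex splits as $L_*\cong\bigoplus_{d'\ge 2}(L_*)_{\varphi_{d'}}$, so $H_*(L_*)_{\varphi_d}=H_*((L_*)_{\varphi_d})$. Moreover $K$ is a simplex, hence contractible, so $H_*(C^0_*(K;R))=0$ and the long exact sequence \eqref{eq:long-rank1} collapses to isomorphisms $H_m(\Br_{n+1};R)\cong H_{m+1}(L_*)$. Thus $H_m(\Br_{n+1};R)_{\varphi_d}\cong H_{m+1}((L_*)_{\varphi_d})$, and by Theorem~\ref{thm:morsecomplex} I can compute the right-hand side from the Morse complex of $\S_{\varphi_d}$ for any acyclic $\varphi_d$-weighted matching on $K$.

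\textbf{The matching.} I would build $\M$ by scanning the vertices $n,n-1,\dots,1$ and, for each not-yet-matched simplex $\sigma$ with $j\notin\sigma$, forming the pair $(\sigma,\sigma\cup\{j\})$ as soon as $v_{\varphi_d}(\sigma\cup\{j\})=v_{\varphi_d}(\sigma)$ — equivalently, as soon as adjoining $j$ neither completes a block whose size becomes a multiple of $d$ nor pushes a block across a multiple of $d$ by extension or merging. Since $\M$ only pairs simplices of equal $\varphi_d$-valuation it is compatible with the poset map $v_{\varphi_d}\colon K\to\N$, so by the Cluster (Patchwork) Lemma acyclicity of $\M$ reduces to acyclicity of the induced matching on each fibre $K_{\varphi_d,p}\setminus K_{\varphi_d,p-1}$, which is a single-vertex toggling matching and hence acyclic. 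A case analysis of how the scanned vertex meets the blocks already present should then give the key structural fact: $\M$ has no critical simplex with $v_{\varphi_d}>0$ unless $n\equiv 0$ or $-1\pmod d$, and in those two cases exactly one, namely a simplex $\sigma_d$ with $k-1$ blocks of size $d-2$ and one block of size $d-1$ placed canonically, so that $v_{\varphi_d}(\sigma_d)=1$ and $|\sigma_d|=(d-2)k+1$ (where $n=dk$ or $n=dk-1$).

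\textbf{Conclusion.} Given this, every critical simplex of valuation $0$ contributes the zero module $R/(\varphi_d^0)$ to the Morse complex of $\S_{\varphi_d}$, so $(L_*)_{\varphi_d}^{\M}$ is either $0$ or a single copy of $R/(\varphi_d)$ in homological degree $(d-2)k+1$, with vanishing differential in either case; hence $H_*((L_*)_{\varphi_d})$ is $R/(\varphi_d)$ in degree $(d-2)k+1$ and zero otherwise when $n\equiv 0,-1\pmod d$, and zero identically otherwise. Shifting down by one via \eqref{eq:long-rank1} gives exactly the stated formula, with the non-vanishing term in degree $(d-2)k$.

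\textbf{Main obstacle.} Everything outside the matching is formal; the real work is producing $\M$ and proving simultaneously that it is acyclic and weight-compatible and that its positive-valuation critical set is exactly $\{\sigma_d\}$ (or empty). The delicate point is that adjoining one vertex can create, extend and merge blocks at once, and whether $v_{\varphi_d}$ jumps depends on the residues mod $d$ of the block sizes involved, so the scanning rule must be chosen so that a simplex and its partner genuinely select the same vertex. The cleanest way to control this is to pass, via the Cluster Lemma, to acyclic matchings on the pure ``no $d-1$ consecutive elements'' complexes on initial segments of $[n]$, where the combinatorics is precisely that of independence complexes of paths — contractible unless the number of vertices is $\equiv 0,-1\pmod d$, and a sphere of the expected dimension otherwise — and it is this dichotomy that yields the two-case answer.
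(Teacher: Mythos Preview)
Your overall plan matches the paper's: use the long exact sequence to reduce to $H_{*+1}((L_*)_{\varphi_d})$, then produce a weighted acyclic matching with at most one positive-weight critical simplex and read off the answer. The reduction and the conclusion paragraph are correct. The gap is entirely in the matching step, and it is more serious than you flag.

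First, the greedy scan is not well-defined as written. At step $j$ you pair an unmatched $\sigma$ (with $j\notin\sigma$) to $\sigma\cup\{j\}$ whenever the weights agree, but you never check that $\sigma\cup\{j\}$ is still free. It need not be: for $d=3$, $n=4$, at step $j=4$ you pair $\{2,3\}$ with $\{2,3,4\}$; at step $j=2$ the simplex $\{3,4\}$ is still unmatched and $v_{\varphi_3}(\{2,3,4\})=v_{\varphi_3}(\{3,4\})=1$, so your rule tries to reuse $\{2,3,4\}$. Second, the acyclicity argument is wrong. Reducing to the fibres of $v_{\varphi_d}$ via the Patchwork Lemma is fine, but the induced matching on a fibre is \emph{not} a single-vertex toggle: different simplices in the same weight level are paired by toggling different vertices (in the example above, the fibre $v=1$ contains both the toggle-$4$ pair $(\{2,3\},\{2,3,4\})$ and the toggle-$1$ pair $(\{3,4\},\{1,3,4\})$), and such matchings can have cycles. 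You need a further grading---for instance by the scan index at which a simplex is matched---to rule them out, and that has to be proved.

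The paper sidesteps both issues by constructing a different, explicitly recursive matching on the posets $K_{n,f}=\{\sigma\subseteq[n]:\{1,\dots,f\}\subseteq\sigma\}$ (the case $f=0$ is the one needed here): toggle vertex $d$ on simplices containing $\{1,\dots,d-1\}$, otherwise toggle vertex $f{+}1$, otherwise recurse on the tail after stripping $f{+}1$ vertices. Acyclicity is proved via an auxiliary three-valued poset map recording which clause applies, and the critical cells are read off by induction: for $f=0$ there are exactly two (of weights $1$ and $0$) precisely when $n\equiv 0$ or $-1\pmod d$, and none otherwise. The paper then invokes its precise-matching formula (Theorem~\ref{thm:homology-artin-groups}) rather than computing $(L_*)^{\M}_{\varphi_d}$ directly as you do, but your direct route---weight-$0$ critical cells contribute $R/(\varphi_d^0)=0$, so the Morse complex is a single $R/(\varphi_d)$ in degree $(d-2)k+1$---is equally valid once a correct matching is in hand.
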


Recall that, if $G$ is a graph with vertex set $VG$, an \emph{independent set} of $G$ is a subset of $VG$ consisting of pairwise non-adjacent vertices.
Also, the \emph{independence complex} $\Ind(G)$ of $G$ is the abstract simplicial complex with $VG$ as set of vertices and whose simplices are all the non-empty independent sets of $G$.
Thus $\Ind(G)$ is the clique complex of the complement graph of $G$.
In contrast with the simplicial complex $K$ introduced in Section \ref{sec:constructions}, the simplicial complex $\Ind(G)$ does not contain the empty simplex, and the dimension of a simplex $\sigma\in\Ind(G)$ is given by $|\sigma|-1$.
The homotopy type of $\Ind(A_n)$ has been computed in \cite{kozlov2007combinatorial} by means of discrete Morse theory, and the result is the following.
\begin{proposition}[{\cite[Proposition 11.16]{kozlov2007combinatorial}}]
\[ \Ind(A_n) \simeq 
  \begin{cases}
    S^{k-1} & \text{if $n=3k$ or $n=3k-1$}, \\
    \{\pt\} & \text{if $n=3k+1$}.
  \end{cases} \]
\end{proposition}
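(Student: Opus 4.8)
The plan is to establish a deletion--link recursion that lowers $n$ by $3$ at each step, and then read off the three residue classes of $n$ modulo $3$. Recall the general setup: for a vertex $v$ of a finite graph $G$, the complex $\Ind(G)$ is the union of the closed star $\mathrm{st}(v)$ and the deletion $\mathrm{del}(v)$, which meet in the link $\mathrm{lk}(v)$; here $\mathrm{del}(v)=\Ind(G\setminus v)$, $\mathrm{lk}(v)=\Ind(G\setminus N[v])$ (with $N[v]$ the closed neighbourhood of $v$), and $\mathrm{st}(v)=v*\mathrm{lk}(v)$ is a cone, hence contractible. Since these are subcomplexes of a simplicial complex, the inclusions are cofibrations, so $\Ind(G)/\mathrm{del}(v)$ is identified with $\mathrm{st}(v)/\mathrm{lk}(v)$, which is the unreduced suspension $\Sigma\,\mathrm{lk}(v)$. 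I would record the following consequence as the main tool: if $\Ind(G\setminus v)$ is contractible, then collapsing it is a homotopy equivalence, whence
\[ \Ind(G)\ \simeq\ \Sigma\,\Ind(G\setminus N[v]). \]

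Next I would apply this with $G=A_n$ for $n\geq 2$ and $v$ the second vertex of the path. Then $G\setminus v$ is the disjoint union of the isolated vertex $1$ with the path on $\{4,\dots,n\}$, so $\Ind(G\setminus v)$ is a cone with apex $1$ and therefore contractible; and $G\setminus N[v]=G\setminus\{1,2,3\}$ is the path $A_{n-3}$ on the vertices $\{4,\dots,n\}$ (the empty graph when $n\leq 3$). This gives the recursion $\Ind(A_n)\simeq\Sigma\,\Ind(A_{n-3})$ for $n\geq 3$, and the same computation for $n=2$ directly yields $\Ind(A_2)\simeq\Sigma\,\varnothing=S^0$ (consistently with the fact that $\Ind(A_2)$ is a pair of isolated points).

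Finally I would feed in the base cases $\Ind(A_0)=\varnothing=S^{-1}$, $\Ind(A_1)=\{\pt\}$, $\Ind(A_2)=S^0$, and iterate, using $\Sigma S^{m}\simeq S^{m+1}$, $\Sigma\,\varnothing=S^0$, and $\Sigma Y\simeq\{\pt\}$ whenever $Y$ is nonempty and contractible. If $n=3k$ the recursion bottoms out at $\Ind(A_0)$ after $k$ suspensions, giving $S^{k-1}$; if $n=3k-1$ it bottoms out at $\Ind(A_2)\simeq S^0$ after $k-1$ suspensions, again $S^{k-1}$; and if $n=3k+1$ it bottoms out at $\Ind(A_1)\simeq\{\pt\}$, so repeated suspension keeps it contractible.

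I expect the argument to be essentially routine; the points that need care are the standard homotopy-theoretic facts (collapsing a contractible subcomplex of a CW complex is a homotopy equivalence, and $\mathrm{st}(v)/\mathrm{lk}(v)$ is the unreduced suspension of $\mathrm{lk}(v)$, so that the recursion stays valid even when the link is empty --- precisely the cases $n\in\{2,3\}$) together with the bookkeeping that makes the recursion terminate in the correct class modulo $3$. An alternative is to argue by discrete Morse theory, as in \cite{kozlov2007combinatorial}: matching $\sigma$ with $\sigma\cup\{1\}$ whenever $\{1,2\}\cap\sigma=\varnothing$ is an acyclic matching whose critical cells are the sets $\{2\}\cup\sigma'$ with $\sigma'$ an independent set of the path on $\{4,\dots,n\}$ (possibly empty); but then one still has to identify the homotopy type of the resulting Morse complex, which is the more delicate step on that route.
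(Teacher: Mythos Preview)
Your argument is correct, with one harmless slip: $A_n\setminus\{2\}$ is the isolated vertex $1$ together with the path on $\{3,\dots,n\}$, not $\{4,\dots,n\}$; the only point that matters---that vertex $1$ becomes a cone apex so that $\Ind(A_n\setminus\{2\})$ is contractible---is unaffected.

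Your route is genuinely different from the paper's. The paper does not reprove this proposition directly but proves its generalization (Corollary~\ref{cor:independents}) by discrete Morse theory: one defines a poset map $f\colon\Ind_{d-2}(A_n)\to Q$ recording the smallest multiple of $d$ present in $\sigma$, builds perfect acyclic matchings on the nonminimal fibers via the Patchwork Theorem, and thereby deformation retracts onto the subcomplex $f^{-1}(c_*)$ of simplices avoiding all multiples of $d$, which is recognized as a join of copies of $\partial\Delta^{d-2}\cong S^{d-3}$ (and possibly one full simplex). Specialized to $d=3$ this yields the statement. Your deletion--link recursion is more elementary and self-contained for the $d=3$ case, producing the sphere by iterated suspension rather than by a join decomposition; the paper's matching argument, on the other hand, generalizes uniformly to all $d$ and meshes with the weighted-sheaf framework used throughout. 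The Morse-theoretic alternative you sketch at the end is closer in spirit to the paper's method, though with a different choice of matching.
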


By comparing this with Theorem \ref{teo:simpleind} we obtain the following relation between the homology of the independence complex of $A_n$ and the $\varphi_3$-primary component of the twisted homology of $\Br_{n+1}$.

\begin{provedcorollary}
  \[ H_*(\Br_{n+1};R)_{\varphi_3} \cong \tilde H_{*-2}\left(\Ind(A_{n-3});\frac{R}{(\varphi_3)}\right), \]
  where on the left we have local coefficients and on the right we have trivial coefficients.
  \label{cor:homology-phi3}
\end{provedcorollary}

In general, following \cite{salvetti2015some}, define the \emph{$r$-independence complex} of a graph $G$ as
\[ \begin{array}{ll} \Ind_r(G)  = \!\!\!\! & \left\{ \mbox{full subgraphs $G'\subseteq G$ such that each connected component} \right.\\
   & \left. \mbox{ of $G'$  has at most $r$ vertices}\right\}.
\end{array}
\]
So $\Ind_r(G)$ is an abstract simplicial complex on the set of vertices $VG$ of the graph $G$, which coincides with $\Ind(G)$ for $r=1$. The case $r=0$ also makes sense: $\Ind_0(G)=\varnothing$ for any graph $G$.
We are going to prove the following generalization of Corollary \ref{cor:homology-phi3}.

\begin{theorem} \label{maintheorem}
  \[ H_*(\Br_{n+1};R)_{\varphi_d} \cong \tilde H_{*-d+1}\left(\Ind_{d-2}(A_{n-d});\frac{R}{(\varphi_d)}\right), \]
  where on the left we have local coefficients and on the right we have trivial coefficients.
\end{theorem}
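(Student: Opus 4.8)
The plan is to apply the full machinery of weighted sheaves and precise/explicit matchings to the Coxeter graph $A_n$, localized at the cyclotomic polynomial $\varphi = \varphi_d$, and to identify the resulting Morse complex directly with the (shifted, reduced) chain complex of $\Ind_{d-2}(A_{n-d})$ with trivial coefficients. First I would recall the weighted sheaf $(K_{A_n}, R, \w_{\varphi_d})$ from Section \ref{sec:constructions}, where for a simplex $\sigma \subseteq S = \{1,\dots,n\}$ one has $\w(\sigma) = \W_\sigma(q)$ and $v_{\varphi_d}(\sigma)$ is the maximal power of $\varphi_d$ dividing it. Since $\W_\sigma$ is a product of symmetric groups (one factor for each connected component of the induced subgraph on $\sigma$, a component with $j$ vertices contributing a factor $S_{j+1}$), the Poincaré polynomial factors as a product of $q$-factorials $[j+1]_q!$, and $\varphi_d \mid [m]_q$ exactly when $d \mid m$. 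Thus $v_{\varphi_d}(\sigma)$ is computed component-by-component: a component with $j$ vertices contributes $\lfloor (j+1)/d \rfloor$ to the $d$-weight. This is the combinatorial heart of the computation and I would spell it out carefully.

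Next I would construct a weighted acyclic matching on $(K_{A_n}, R, \w_{\varphi_d})$ tailored to produce exactly the right critical cells. The key is that the matching should operate \emph{inside} each weight stratum $v_{\varphi_d} = p$ (which is legitimate by the definition of weighted acyclic matching), pairing up all simplices except those that are "reduced" in a suitable normal form. I expect the critical simplices to be precisely those $\sigma$ whose connected components each have exactly $d-1$ vertices (so each component contributes weight $1$ and cannot be "matched down") together with a leftover block of $d$ consecutive vertices — intuitively, after removing a clustered block of $d$ vertices at one end, the remaining critical data records a subgraph of $A_{n-d}$ all of whose components have at most $d-2$ vertices, which is exactly a simplex of $\Ind_{d-2}(A_{n-d})$. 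The matching itself would be built by the standard "first free vertex" or element-matching trick on intervals: within a maximal run of consecutive vertices of a component, one matches by toggling membership of a distinguished vertex, arranged so that the only survivors are those of the prescribed form; acyclicity follows from the usual poset-map / Patchwork (cluster) lemma argument, grouping matched pairs by a well-chosen linear order so that alternating paths strictly decrease a discrete invariant.

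Then I would invoke Theorem \ref{thm:morsecomplex} to replace $H_*((L_*)_{\varphi_d})$ by the homology of the Morse complex $(L_*)_{\varphi_d}^{\M}$, and use the long exact sequence \eqref{eq:long-rank1} together with Theorem \ref{teo:simpleind} (which pins down $H_*(\Br_{n+1};R)$ entirely) to reduce the statement to an identification of complexes. Since all surviving critical simplices will turn out to have the same $\varphi_d$-weight once we fix the "number of $(d-1)$-blocks plus one $d$-block" (or, more precisely, the weight is determined by the number of full blocks), the boundary map $\partial^{\M}$ on the Morse complex changes weight by exactly one on each incidence — i.e. the matching is \emph{precise} in the sense of the paper — so by Theorem \ref{thm:homology-phi-component}/\ref{thm:homology-L} (or directly) the Morse complex splits as a shifted copy of the trivial-coefficient chain complex of $\Ind_{d-2}(A_{n-d})$ with ground ring $R/(\varphi_d)$. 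Matching the degree shift: the $d$-vertex block accounts for a dimension shift of $d-1$ (it is a $(d-1)$-cell's worth of vertices, contributing to $|\sigma|$ but collapsed in the independence complex picture), plus the $+1$ from passing between reduced homology of $\Ind$ (which omits $\varnothing$) and the $1$-shifted complex $C^0_*$, giving the claimed shift $* \mapsto *-d+1$. Finally I would cross-check against the known Corollary \ref{cor:homology-phi3} ($d=3$) and against Theorem \ref{teo:simpleind} numerically.

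\textbf{Main obstacle.} The delicate point is the explicit construction of the matching and the verification that it is (a) well-defined across all weight strata simultaneously, (b) acyclic, and (c) \emph{precise}, so that no boundary maps connect critical cells of the same weight. Getting the normal form exactly right — so that the surviving cells biject with simplices of $\Ind_{d-2}(A_{n-d})$ and not some larger or smaller complex — requires care about boundary effects at the two ends of the linear graph $A_n$ and about how the single "$d$-block" is allowed to float; an off-by-one in the block sizes would give $\Ind_{d-1}$ or the wrong value of $n$. I would handle this by describing the matching recursively in $n$ (peeling off vertices from the left end) and proving preciseness by the weight-monotonicity observation above, namely that an incidence $\tau \lhd \sigma$ with both critical forces $v_{\varphi_d}(\sigma) = v_{\varphi_d}(\tau) + 1$ because removing a vertex from a $(d-1)$-block of $\sigma$ cannot preserve all block-weights unless that vertex belonged to the unique $d$-block.
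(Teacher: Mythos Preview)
Your overall strategy—construct a weighted acyclic matching on $K_{A_n}$, pass to the Morse complex, and use the long exact sequence \eqref{eq:long-rank1} together with the vanishing of $H_*(C^0_*)$—is exactly the paper's approach. But your description of the critical cells is muddled, and your reliance on \emph{preciseness} is misplaced. You write that the critical simplices should have ``connected components each of exactly $d-1$ vertices \dots\ together with a leftover block of $d$ consecutive vertices''; this is not what happens, and it contradicts your own later (correct) sentence that ``after removing a clustered block of $d$ vertices at one end, the remaining critical data records a subgraph of $A_{n-d}$ all of whose components have at most $d-2$ vertices.'' The paper's matching (Theorem~\ref{thm:casoAn}) produces critical cells of positive weight of the form $\tau \sqcup A_{d-1}$, where $A_{d-1}$ is the \emph{fixed} block on $\{n-d+2,\dots,n\}$ and $\tau\in\Ind_{d-2}(A_{n-d})$ has all components of size $\le d-2$. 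Every such critical cell has $v_{\varphi_d}$ equal to exactly~$1$.

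This last point is where your plan genuinely fails: the matching you need is \emph{not} precise, and cannot be. Since the positive-weight critical cells all sit at weight~$1$ and there are nontrivial Morse incidences among them (indeed, those incidences \emph{are} the simplicial boundary of $\Ind_{d-2}(A_{n-d})$), the condition $v_{\varphi_d}(\sigma)=v_{\varphi_d}(\tau)+1$ on Morse edges is violated. Preciseness and ``critical cells biject with the simplices of $\Ind_{d-2}(A_{n-d})$'' are mutually exclusive: a precise matching for $A_n$ (as in Section~\ref{sec:precise-matching-An}) leaves at most two critical cells, not a whole complex. The paper does not invoke Theorem~\ref{thm:homology-phi-component} here at all. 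Instead, the argument is simpler: because every surviving positive-weight cell has weight~$1$, the Morse complex $(L_*)_{\varphi_d}^{\M}$ is literally the (shifted, reduced) simplicial chain complex of $\Ind_{d-2}(A_{n-d})$ with constant coefficients $R/(\varphi_d)$, and one checks there are no nontrivial alternating paths, so the Morse boundary equals the simplicial boundary. Drop the preciseness apparatus and the degree-shift bookkeeping then goes through exactly as you sketched.
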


From the expression of the $\varphi_d$-primary component of the local homology of the braid group (Theorem \ref{teo:simpleind}) we obtain the following consequence.

\begin{corollary}\label{cor:independents}
  \[ \tilde H_*(\Ind_{d-2}(A_n)) \cong
    \begin{cases}
      \tilde H_*(S^{dk-2k-1}) & \text{for $n=dk$ or $n=dk-1$}, \\
      \, 0 & \text{otherwise}.
    \end{cases}
  \]
\end{corollary}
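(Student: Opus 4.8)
The plan is to deduce Corollary \ref{cor:independents} directly from the isomorphism of Theorem \ref{maintheorem} together with the explicit computation of $H_*(\Br_{n+1};R)_{\varphi_d}$ recalled in Theorem \ref{teo:simpleind}. No new geometric or combinatorial input is needed: the only work is inverting an index shift and keeping track of degrees. Throughout, the coefficient ring in the conclusion is to be read as the field $R/(\varphi_d)$, so that $\tilde H_*(S^{m})$ on the right-hand side abbreviates a copy of $R/(\varphi_d)$ in degree $m$ and $0$ in every other degree.

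First I would rewrite the isomorphism of Theorem \ref{maintheorem} with $n$ replaced by $n+d$, which turns $A_{n-d}$ into $A_n$; this gives, for every integer $j$,
\[ \tilde H_{j}\!\left(\Ind_{d-2}(A_{n});\tfrac{R}{(\varphi_d)}\right)\;\cong\;H_{j+d-1}\!\left(\Br_{n+d+1};R\right)_{\varphi_d}. \]
Thus the whole reduced homology of $\Ind_{d-2}(A_n)$ is, up to the shift $j\mapsto j+d-1$, the $\varphi_d$-primary component of the twisted homology of the braid group on $n+d+1$ strands. Now I apply Theorem \ref{teo:simpleind} with its parameter equal to $n+d$: the right-hand side is nonzero precisely when $n+d\equiv 0$ or $-1 \pmod d$, equivalently $n=dk$ or $n=dk-1$ for some integer $k$; for all other $n$ every group vanishes, which is the ``$0$ otherwise'' clause. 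In the nonzero case, writing $n+d=d\ell$ or $n+d=d\ell-1$ we obtain $\ell=k+1$, and the unique nonvanishing braid homology group lies in degree $(d-2)\ell=(d-2)(k+1)$; pulling this back through the shift, the unique nonvanishing reduced homology group of $\Ind_{d-2}(A_n)$ lies in degree $(d-2)(k+1)-(d-1)=dk-2k-1$ and equals $R/(\varphi_d)$ there. This is exactly $\tilde H_*(S^{dk-2k-1})$. As a sanity check, for $d=3$ this reproduces the homotopy equivalences $\Ind(A_n)\simeq S^{k-1}$ (resp.\ a point) recalled above.

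I do not expect a genuine obstacle in the argument itself; it is a corollary in the strict sense. The points demanding care are: the arithmetic of the index shift — it is easy to be off by one in extracting the sphere dimension $dk-2k-1$, and one should check that the residue classes $n\equiv 0,-1\pmod d$ behave correctly (disjoint for $d\ge 3$, exhausting all $n$ for $d=2$); the degenerate instances, where for $d=2$ one has $\Ind_{0}(A_n)=\varnothing$ for every $n$ and the formula consistently predicts $\tilde H_*(S^{-1})$, i.e.\ coefficients in degree $-1$, so one should confirm that the conventions for $A_0$, for the empty complex, and for $S^{-1}$ are mutually compatible; and, most importantly, the coefficients — Theorems \ref{maintheorem} and \ref{teo:simpleind} only control homology over the field $R/(\varphi_d)$, so the corollary is proved over that field. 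Upgrading it to a statement about integral homology, let alone a homotopy-type statement in the spirit of Kozlov's result for $\Ind(A_n)$, would require an extra torsion-freeness argument that the two quoted theorems do not furnish.
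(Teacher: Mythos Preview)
Your argument is correct: the index shift is handled properly, and deducing the corollary from Theorems \ref{maintheorem} and \ref{teo:simpleind} is exactly the derivation the paper has in mind when it writes ``From the expression of the $\varphi_d$-primary component of the local homology of the braid group (Theorem \ref{teo:simpleind}) we obtain the following consequence.'' So in that sense your approach coincides with the paper's.

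However, the proof the paper actually \emph{writes out} for Corollary \ref{cor:independents} is a different, purely topological one, offered ``for the sake of completeness.'' There the authors construct an acyclic matching on $K_0=\Ind_{d-2}(A_n)$ via a poset map to a chain $\{c_d>c_{2d}>\dots>c_{qd}>c_*\}$, with perfect matchings on all fibers except $f^{-1}(c_*)$; this retracts $K_0$ onto the subcomplex $K_1=f^{-1}(c_*)$, which is a join of $q$ copies of $\Ind_{d-2}(A_{d-1})\cong S^{d-3}$ and one copy of $\Ind_{d-2}(A_r)$. The join decomposition then yields the homotopy type $S^{dk-2k-1}$ (or a contractible space) directly. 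The trade-off is precisely the one you identify in your last paragraph: your route is shorter and purely formal, but it only determines $\tilde H_*$ with coefficients in the field $R/(\varphi_d)$, whereas the paper's topological proof pins down the actual homotopy type and hence the homology over any coefficient ring. Your caveat about upgrading to integral coefficients is therefore exactly on point.
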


Therefore the knowledge of the twisted homology of the braid group gives the homology (with trivial coefficients) of $\Ind_d(A_n)$.
Conversely the knowledge of the homology of $\Ind_d(A_n)$ gives the twisted homology of the braid group.

The Poincaré polynomial of a Coxeter group of type $A_k$ is given by
\[ \W_{A_k}(q)= [k+1]_q!\quad \text{where} \quad  [k]_q=\frac{q^k-1}{q-1} = \prod_{\substack{d\mid k\\ d\geq 2}}\ \varphi_d \]
(see for example \cite{bjorner2006combinatorics}).
Consider now a simplex $\sigma\subseteq S\cong \{ 1,\dots, n \}$. Denote by $\Gamma(\sigma)$ the subgraph of $A_n$ induced by $\sigma$. Denote by $\Gamma_1(\sigma)$, $\Gamma_2(\sigma)$, \dots, $\Gamma_m(\sigma)$ the connected components of $\Gamma(\sigma)$, and by $n_1$, $n_2$, \dots $n_m$ their cardinalities (see Figure \ref{fig:An-components}).
The $i$-th connected component is a Coxeter graph of type $A_{n_i}$, so the entire Coxeter graph induced by $\sigma$ has Poincaré polynomial
\begin{align*}
  \W_\sigma (q) &= [n_1+1]_q! \cdot [n_2+1]_q! \cdots [n_m+1]_q! \\
  &= \;\prod_{i=1}^m \; \prod_{d\geq 2} \, \varphi_d^{\lfloor \frac{n_i+1}{d} \rfloor} \\
  &= \; \prod_{d\geq 2} \varphi_d^{\;\sum_{i=1}^m \lfloor \frac{n_i+1}{d} \rfloor}.
\end{align*}
Then we are interested in the homology of the weighted complex $(L_*)_{\varphi}$ associated to the weighted sheaf $(K,R,\w_\varphi)$, where $\varphi=\varphi_d$ is a cyclotomic polynomial (with $d\geq 2$) and
\[ \w_\varphi(\sigma) = \varphi^{v_\varphi(\sigma)}, \quad v_\varphi(\sigma) = \sum_{i=1}^m \textstyle \left\lfloor \frac{n_i+1}{d} \right\rfloor. \]
Notice that only the connected components with at least $d-1$ vertices contribute to the $\varphi$-weight.
Therefore, the weighted complex $(L_*)_{\varphi}$ is generated by the subgraphs having at least one component with $\geq d-1$ vertices.

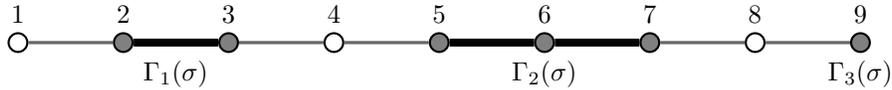
\begin{figure}[htbp]
  \begin{tikzpicture}
\begin{scope}[every node/.style={circle,thick,draw,inner sep=2.5}, every label/.style={rectangle,draw=none}]
  \node (1) at (0.0,0) [label={above,minimum height=13}:$1$] {};
  \node (2) at (1.4,0) [label={above,minimum height=13}:$2$,fill=black!50] {};
  \node (3) at (2.8,0) [label={above,minimum height=13}:$3$,fill=black!50] {};
  \node (4) at (4.2,0) [label={above,minimum height=13}:$4$] {};
  \node (5) at (5.6,0) [label={above,minimum height=13}:$5$,fill=black!50] {};
  \node (6) at (7.0,0) [label={above,minimum height=13}:$6$,fill=black!50] [label={below}:$\Gamma_2(\sigma)$] {};
  \node (7) at (8.4,0) [label={above,minimum height=13}:$7$,fill=black!50] {};
  \node (8) at (9.8,0) [label={above,minimum height=13}:$8$] {};
  \node (9) at (11.2,0) [label={above,minimum height=13}:$9$,fill=black!50] [label={below}:$\Gamma_3(\sigma)$] {};
\end{scope}
\begin{scope}[every edge/.style={draw=black!60,line width=1.2}]
  \path (1) edge node {} (2);
  \path (3) edge node {} (4);
  \path (4) edge node {} (5);
  \path (7) edge node {} (8);
  \path (8) edge node {} (9);
\end{scope}
\begin{scope}[every edge/.style={draw=black, line width=3}]
  \path (2) edge node {} node [below=2] {$\Gamma_1(\sigma)$} (3);
  \path (5) edge node {} (6);
  \path (6) edge node {} (7);
\end{scope}
\begin{scope}[every node/.style={draw,inner sep=11.5,yshift=-4}, every label/.style={rectangle,draw=none,inner sep=6.0}, every fit/.append style=text badly centered]
\end{scope}
\end{tikzpicture}
   \caption{An example with $n=9$ and $\sigma=\{2,3,5,6,7,9\} \in \Ind_3(A_9)$. In this case $|\Gamma_1(\sigma)|=2,\ |\Gamma_2(\sigma)|=3,\ |\Gamma_3(\sigma)|=1$.} 
  \label{fig:An-components}
\end{figure}

\begin{theorem}
\label{thm:casoAn}
There is a weighted acyclic matching $\mathcal M$ on $K$ such that
the set of critical simplices is given by
\begin{eqnarray*}
  \Cr(\M) \!\! &=& \!\! \{ \sigma \mid \Gamma(\sigma) = \Gamma_1 \sqcup\dots\sqcup\Gamma_{m-1} \sqcup A_{d-1}, \; |\Gamma_i|\leq d-2\} \,\cup\, \Ind_{d-2}(A_n) \\
  \!\! &=& \!\! \{ \tau \sqcup A_{d-1} \mid \tau \in \Ind_{d-2}(A_{n-d}) \} \,\cup\, \Ind_{d-2}(A_n),
\end{eqnarray*}
where $A_{d-1}$ is the linear graph on the vertices $n-d+2$, \dots, $n$.
\end{theorem}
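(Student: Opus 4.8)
The plan is to write down the matching explicitly. Since every parabolic subgroup of a Coxeter group of type $A$ is finite, $K=K_{A_n}$ is the full simplex on $\{1,\dots,n\}$, and $v_\varphi(\sigma)=\sum_i\lfloor (n_i+1)/d\rfloor$ depends only on the sizes $n_i$ of the connected components $\Gamma_i(\sigma)$ of $\Gamma(\sigma)$; call a component \emph{heavy} if it has at least $d-1$ vertices, so that $v_\varphi(\sigma)\geq 1$ iff $\Gamma(\sigma)$ has a heavy component. For such $\sigma$ let $a(\sigma)$ be the left endpoint of its leftmost heavy component and set $\mu(\sigma)=a(\sigma)+d-1$. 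The matching $\M$ I would use pairs $\sigma$ with $\sigma\,\triangle\,\{\mu(\sigma)\}$ whenever $v_\varphi(\sigma)\geq 1$ and $\mu(\sigma)\leq n$; every simplex with $v_\varphi(\sigma)=0$, and every simplex whose leftmost heavy component equals $\{n-d+2,\dots,n\}$ (so that $\mu(\sigma)=n+1$), is left critical.

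First I would check that this is a well-defined weighted matching. If $C=\{a,\dots,b\}$ is the leftmost heavy component of $\sigma$, then either $b=a+d-2$, in which case $\mu(\sigma)=b+1\notin\sigma$ and the move \emph{adds} $\mu(\sigma)$, enlarging $C$ (and possibly merging it with the next component), or $b\geq a+d-1$, in which case $\mu(\sigma)\in C$ and the move \emph{removes} $\mu(\sigma)$, splitting $C$ into $\{a,\dots,a+d-2\}$ and $\{a+d,\dots,b\}$ (the latter possibly empty). In both directions the resulting simplex still has its leftmost heavy component starting at $a$, so the two moves are mutually inverse and $\M$ is genuinely a matching; and a short computation with $\lfloor m/d\rfloor = 1+\lfloor (m-d)/d\rfloor$ (applied at $m=b-a+2$, and the same way in the merging case) shows $v_\varphi$ is unchanged, so $(\w_\varphi(\sigma))=(\w_\varphi(\sigma\,\triangle\,\{\mu(\sigma)\}))$ and $\M$ is weighted.

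The crux is acyclicity, which I would prove by showing that $a(\,\cdot\,)$ strictly increases along any alternating path that does not terminate. On a path $\sigma^{(0)}\rhd\tau^{(1)}\lhd\sigma^{(1)}\rhd\tau^{(2)}\lhd\cdots$ one has $\tau^{(j+1)}=\sigma^{(j)}\setminus\{x\}$ for some $x\neq\mu(\sigma^{(j)})$, and by the previous step $\sigma^{(j)}$ has a heavy component $\{a,\dots,b\}$ with $a=a(\sigma^{(j)})=a(\tau^{(j)})$ and $b\geq a+d-1$. I then split on the position of $x$: if $x<a$ or $x>\mu(\sigma^{(j)})$, the heavy component at $a$ is only shortened from the right (or left untouched) and still contains $\mu(\sigma^{(j)})$, so the $\M$-partner of $\tau^{(j+1)}$ is $\tau^{(j+1)}\setminus\{\mu(\sigma^{(j)})\}$, which is smaller — hence the path terminates at $\tau^{(j+1)}$; if $a\leq x<\mu(\sigma^{(j)})$, removing $x$ breaks $\{a,\dots,b\}$ into the non-heavy piece $\{a,\dots,x-1\}$ and the piece $\{x+1,\dots,b\}$, so no heavy component of $\tau^{(j+1)}$ starts at $a$ and therefore $a(\tau^{(j+1)})>a$ whenever it is defined. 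Thus each extension of the path strictly increases $a(\,\cdot\,)\leq n-d+2$, so there are no closed alternating paths and $\M$ is acyclic. Finally, a simplex is critical precisely when $v_\varphi=0$, i.e. lies in $\Ind_{d-2}(A_n)$, or when $v_\varphi\geq 1$ but $\mu(\sigma)>n$; the latter forces $a(\sigma)=n-d+2$, hence the unique heavy component is $\{n-d+2,\dots,n\}$ and (since $n-d+1\notin\sigma$) the complement lies in $\{1,\dots,n-d\}$ with all components of size $\leq d-2$, i.e. $\sigma=\tau\sqcup A_{d-1}$ with $\tau\in\Ind_{d-2}(A_{n-d})$ — which is exactly the asserted critical set.

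I expect the main obstacle to be the acyclicity argument: one has to carry out the case analysis on the position of the removed vertex $x$ carefully and verify in every branch that the alternating path either dies or pushes $a(\,\cdot\,)$ strictly to the right. The weight-preservation identities are routine but require the extra care of tracking what happens when the elementary move merges two components or leaves a non-heavy remainder; and the self-consistency of the rule $\sigma\mapsto\sigma\,\triangle\,\{\mu(\sigma)\}$ — that a simplex and its partner assign the same value of $a(\,\cdot\,)$, hence toggle the same vertex — is the other point that must be checked with some attention.
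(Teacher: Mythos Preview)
Your proposal is correct and follows essentially the same approach as the paper: you construct the identical matching (toggle the vertex $a+d-1$, where $a$ is the left endpoint of the leftmost component of size $\ge d-1$), and you prove acyclicity by the same mechanism the paper uses---tracking the leftmost heavy component and showing its starting position can only move rightward along an alternating path. Your write-up is in fact more explicit than the paper's (which sketches the case analysis in a single sentence and contains a minor indexing typo), but the underlying idea is the same.
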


\begin{proof}
First notice that removing the $d$-th vertex from an $A_k$ component leaves the $\varphi$-weight unchanged:
\[ v_\varphi(A_k) = \textstyle \left\lfloor\frac{k+1}{d}\right\rfloor = 1+\left\lfloor\frac{k+1-d}{d}\right\rfloor = v_\varphi(A_{d-1}\sqcup A_{k-d}) \]
(see Figure \ref{fig:Ak-matching}).
\begin{figure}[htbp] 
  \begin{tikzpicture}
\begin{scope}[every node/.style={circle,thick,draw,inner sep=2.5}, every label/.style={rectangle,draw=none}]
  \node (0) at (0.0,0) [label={above,minimum height=13}:$ $,fill=black!50] {};
  \node (1) at (1.3,0) [label={above,minimum height=13}:$ $,fill=black!50] {};
  \node (2) at (2.6,0) [label={above,minimum height=13}:$ $,fill=black!50] {};
  \node (3) at (3.9,0) [label={above,minimum height=13}:$ $,fill=black!50] {};
  \node (4) at (5.2,0) [label={above,minimum height=13}:$ $,fill=black!50] {};
  \node (5) at (6.5,0) [label={above,minimum height=13}:$ $,fill=black!50] {};
  \node (6) at (7.8,0) [label={above,minimum height=13}:$ $,fill=black!50] {};
  \node (7) at (9.1,0) [label={above,minimum height=13}:$ $,fill=black!50] {};
\end{scope}
\begin{scope}[every edge/.style={draw=black!60,line width=1.2}]
\end{scope}
\begin{scope}[every edge/.style={draw=black, line width=3}]
  \path (0) edge node {} (1);
  \path (1) edge node [fill=white, rectangle, inner sep=3.0, minimum height = 0.5cm] {$\ldots$} (2);
  \path (2) edge node {} (3);
  \path (3) edge node {} node [below=2] {$k$\;vertices} (4);
  \path (4) edge node [fill=white, rectangle, inner sep=3.0, minimum height = 0.5cm] {$\ldots$} (5);
  \path (5) edge node {} (6);
  \path (6) edge node {} (7);
\end{scope}
\begin{scope}[every node/.style={draw,inner sep=11.5,yshift=-4}, every label/.style={rectangle,draw=none,inner sep=6.0}, every fit/.append style=text badly centered]
\end{scope}
\end{tikzpicture}
   \vskip0.3cm
  \begin{tikzpicture}
\begin{scope}[every node/.style={circle,thick,draw,inner sep=2.5}, every label/.style={rectangle,draw=none}]
  \node (0) at (0.0,0) [label={above,minimum height=13}:$ $,fill=black!50] {};
  \node (1) at (1.3,0) [label={above,minimum height=13}:$ $,fill=black!50] [label={below}:$d-1$\;vertices] {};
  \node (2) at (2.6,0) [label={above,minimum height=13}:$ $,fill=black!50] {};
  \node (3) at (3.9,0) [label={above,minimum height=13}:$ $] {};
  \node (4) at (5.2,0) [label={above,minimum height=13}:$ $,fill=black!50] {};
  \node (5) at (6.5,0) [label={above,minimum height=13}:$ $,fill=black!50] {};
  \node (6) at (7.8,0) [label={above,minimum height=13}:$ $,fill=black!50] {};
  \node (7) at (9.1,0) [label={above,minimum height=13}:$ $,fill=black!50] {};
\end{scope}
\begin{scope}[every edge/.style={draw=black!60,line width=1.2}]
  \path (2) edge node {} (3);
  \path (3) edge node {} (4);
\end{scope}
\begin{scope}[every edge/.style={draw=black, line width=3}]
  \path (0) edge node {} (1);
  \path (1) edge node [fill=white, rectangle, inner sep=3.0, minimum height = 0.5cm] {$\ldots$} (2);
  \path (4) edge node [fill=white, rectangle, inner sep=3.0, minimum height = 0.5cm] {$\ldots$} (5);
  \path (5) edge node {} node [below=2] {$k-d$\;vertices} (6);
  \path (6) edge node {} (7);
\end{scope}
\begin{scope}[every node/.style={draw,inner sep=11.5,yshift=-4}, every label/.style={rectangle,draw=none,inner sep=6.0}, every fit/.append style=text badly centered]
\end{scope}
\end{tikzpicture}
   \caption{The $\varphi$-weight of an $A_k$ component remains the same if we remove the $d$-th vertex, splitting $A_k$ into $A_{d-1} \sqcup A_{k-d}$.} 
  \label{fig:Ak-matching}
\end{figure}
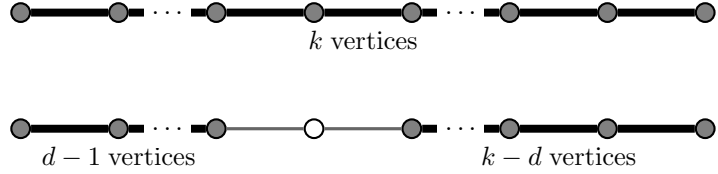

Let $K_0 = \Ind_{d-2}(A_n) \subseteq K$ (this is the set of the simplices $\sigma \in K$ such that all the connected components of the induced subgraph $\Gamma(\sigma)$ have cardinality $< d-1$).
Let us define on $K' = K \setminus K_0$ the following matching $\M$.
For $\sigma \in K'$, with $\Gamma(\sigma) = \Gamma_1 \sqcup \dots \sqcup \Gamma_m$, set
\[ i(\sigma) = \min\{i \mid |\Gamma_i|\geq d-1\}. \]
Then match $\sigma$ with the simplex $\tau$ obtained by adding or removing from $\sigma$ the $d$-th vertex of the component $\Gamma_{i(\sigma)}$.
By the remark at the beginning of the proof, $\mathcal M$ is a weighted matching. We prove that it is acyclic. 
In fact, suppose that an alternating path contains some subpath
\[ \tau \lhd \sigma \rhd \tau' \lhd \sigma' \]
with $\tau \neq \tau'$.
Then either $i(\tau) < i(\tau')$ or (set $j=i(\tau) = i(\tau')$)
\[ \Gamma_j(\tau) = \{ a, a+1,\dots, a+d-2 \},\quad \Gamma_j(\tau') = \{a+1,\dots, a+d-1 \} \]
for some $a\leq n-d+1.$ In this case the first vertex of $\Gamma_j(\tau')$ is greater than the first vertex of $\Gamma_j(\tau)$.
Therefore an alternating path in $K'$ cannot be closed.

The set of critical elements of $\M$ in $K'$ is given by
\[ \Cr(\M)= \{ \sigma \mid \Gamma(\sigma) = \Gamma_1 \sqcup\dots\sqcup\Gamma_{m-1} \sqcup A_{d-1}, \; |\Gamma_i|\leq d-2\}, \]
where  $A_{d-1}$ is as in the statement of the theorem.
\end{proof}

\begin{proof}[Proof of Theorem \ref{maintheorem}]
  \label{proof:maintheorem}
  Consider the matching $\M$ of Theorem \ref{thm:casoAn}.
  Since $K_0 = \Ind_{d-2}(A_n)$ does not contribute to the weighted complex $(L_*)_\varphi$, we concentrate on the poset $K' = K \setminus K_0$.
  Notice that there are no non-trivial alternating paths between critical elements of $K'$.
  Therefore the boundaries between simplices in
  \[ \Cr'(\M) = \{ \tau \sqcup A_{d-1} \mid \tau \in \Ind_{d-2}(A_{n-d}) \} \]
  are the same as the boundaries of $\Ind_{d-2}(A_{n-d})$.
  In the long exact sequence \eqref{eq:long-rank1} the algebraic complex $C^0_*$ has vanishing homology in all degrees (because $K$ is the full simplicial complex on $n$ vertices), so we have an isomorphism $H_{*+1}(L_*) \cong H_*(C_*)$. Therefore
  \begin{align*}
    H_*(\Br_{n+1};R)_{\varphi_d} &\cong H_*(C_*)_{\varphi_d} \\
    & \cong H_{*+1}(L_*)_{\varphi_d} \\
    & \cong \tilde H_{*-d+1} \left(\Ind_{d-2}(A_{n-d}); \frac{R}{(\varphi_d)}\right).
  \end{align*}
  In the last isomorphism there is a $(d-1)$-shift in degree due to the loss of $d-1$ vertices when passing from $\Cr'(\M)$ to $\Ind_{d-2}(A_{n-d})$; there is then a further $1$-shift in degree due to the fact that in $L_*$ a simplex $\sigma$ has dimension $|\sigma|$, and in $\Ind_{d-2}(A_{n-d})$ it has dimension $|\sigma|-1$; finally, it is necessary to pass to reduced homology because the empty simplex is missing in $\Ind_{d-2}(A_{n-d})$.
\end{proof}

For the sake of completeness we also determine the homotopy type of the $r$-independence complex $\Ind_{d-2}(A_n)$ (obtaining again Corollary \ref{cor:independents} as a consequence).
This is a straightforward generalization of the case $d=3$ proved in \cite[Proposition 11.16]{kozlov2007combinatorial}.

\begin{proposition}
\[ \Ind_{d-2}(A_n) \simeq 
  \begin{cases}
    S^{dk-2k-1} & \text{if $n=dk$ or $n=dk-1$}, \\
    \{\pt\} & \text{otherwise}.
  \end{cases} \]
\end{proposition}

\begin{proof}
  We denote here for brevity $K_0 = \Ind_{d-2}(A_n)$.
  Let $n=qd+r$ be the euclidean division of $n$ by $d$.

  Let $P=\{ c_d>c_{2d}>c_{3d}>\dots> c_{qd}> c_*\}$ be a linearly ordered set, where $c_*$ is the minimum element.
  Define a map $f\colon K_0\to P$ as follows.
  If $\sigma\in K_0$ does not contain any multiples of $d$, then set $f(\sigma)=c_*$.
  Otherwise, set $f(\sigma)=c_{jd}$ if $jd\in\sigma$ but $j'd\not\in\sigma$ for $j'<j$.
  Clearly $f$ is a poset map (if we remove a vertex from $\sigma$, $j$ increases or remains the same).

  In $f^{-1}(c_d)$ consider the matching $\mathcal M_1=\{ (\sigma\setminus \{1\}, \sigma\cup \{ 1 \} ) \mid \sigma \in f^{-1}(c_d) \}$,
  which is justified by:
  \[ \sigma\in f^{-1}(c_d)\, \Rightarrow\, \{2,\dots,d-1\}\not\subseteq\sigma. \]
  Similarly, in $f^{-1}(c_{jd})$ for $j\leq q$, consider the matching
  \[ \mathcal M_j=\Big\{\Big(\sigma \setminus \{ (j-1)d+1\},\; \sigma \cup \{ (j-1)d+1\} \Big) \;\big|\; \sigma \in f^{-1}(c_{jd}) \Big\}, \]
  justified by:
  \[ \sigma\in f^{-1}(c_{jd})\, \Rightarrow\, \{(j-1)d+2,\dots,jd-1\}\not\subseteq\sigma. \]
  Each $\mathcal M_j$ is acyclic in $f^{-1}(c_{jd})$, thus
  \[ \M = \bigcup_{j=1}^q  \M_j \]
  is an acyclic matching on $K_0$ by the Patchwork Theorem (see for example \cite[Lemma 4.2]{jonsson2008simplicial}, \cite[Theorem 11.10]{kozlov2007combinatorial}).
  Since each $\mathcal M_j$ is a perfect matching on $f^{-1}(c_{jd})$ for $j=1,\dots, q$, and $K_1=f^{-1}(c_*)$ is a subcomplex of $K_0$, it follows that $K_0$ deformation retracts onto $K_1$. 

  Notice now that $K_1$ is the join of $q$ copies of $\Ind_{d-2}(A_{d-1})\cong S^{d-3}$ and one copy of $\Ind_{d-2}(A_{r})$.
  For $r=0$ we get
  \[ K_0\cong (S^{d-3})^{*q}\cong S^{q(d-3)+q-1} = S^{qd-2q-1} \]
  and for $r=d-1$ we get
  \[ K_0\cong (S^{d-3})^{*(q+1)}\cong S^{(q+1)(d-3)+q} = S^{(q+1)d-2(q+1)-1}, \]
  which give the first case in the corollary.
  If $r$ is not $0$ or $d-1$ then $\Ind_{d-2}(A_r)$ is a full simplex $\Delta^{r-1}$ on $r$ vertices, and therefore $K_0$ is contractible.
\end{proof}

\section{Precise matchings}
\label{sec:precise-matchings}

In this section we are going to introduce the notion of \emph{precise matching} on a weighted sheaf.
The motivation comes from the study of the twisted homology of some families of Artin groups, as we will show in the subsequent sections.
For now we go back to the general framework of weighted sheaves over simplicial complexes (later we will specialize in the case of Artin groups).

Assume from now on that the PID $R$ contains some field $\K$. Our main case of interest is $\K=\Q$ and $R=\Q[q^{\pm1}]$.
Let $\S=(K,R,\w)$ be a weighted sheaf over the finite simplicial complex $K$, with associated weighted complex $L_*$. Given a fixed irreducible element $\varphi$ of $R$, let $\S_\varphi$ be the $\varphi$-primary component of $\S$ and let $(L_*)_\varphi$ be its associated weighted complex.
Let $\M$ be a weighted acyclic matching for $\S_\varphi$.

Let $G^\M$ be the incidence graph of the corresponding Morse complex: the vertices of $G^\M$ are the critical simplices of $K$, and there is an (oriented) edge $\sigma \to \tau$ whenever $[\sigma:\tau]^\M$
is not $0$ in $R$ (or equivalently in $\K$), where $[\sigma:\tau]^\M \in \Z$ is the incidence number between $\sigma$ and $\tau$ in the Morse complex of $K$.
In other words, there is an edge $\sigma \to \tau$ if $[\sigma:\tau]^\M$ is not multiple of $\ch\K = \ch R$.
When $\K=\Q$, this simply means that $[\sigma:\tau]^\M \neq 0$.

Let $\I$ be the set of connected components of $G^\M$ (computed ignoring the orientation of the edges).
Recall that $v_\varphi(\sigma)$ is the maximal $k\in\N$ such that $\varphi^k$  divides $\w(\sigma)$.

\begin{definition}
  The matching $\M$ is \emph{$\varphi$-precise} (or simply \emph{precise}) if, for any edge $\sigma\to \tau$ of $G^\M$, we have that $v_\varphi(\sigma) = v_\varphi(\tau)+1$.
  \label{def:precise-matching}
\end{definition}

In other words $\M$ is precise if, for any two simplices $\sigma$ and $\tau$ lying in the same connected component $i\in\I$, the following relation holds:
\[ v_\varphi(\sigma) - v_\varphi(\tau) = |\sigma| - |\tau|. \]
Equivalently the quantity $|\sigma| - v_\varphi(\sigma)$, as a function of $\sigma$, is constant within a fixed connected component of $G^\M$.
This definition is motivated by the fact that precise matchings exist in many cases of interest (we will see this in Section \ref{sec:precise-matchings-artin-groups}), and that the homology of the Morse complex is much simpler to compute (and takes a particularly nice form) when the matching is precise.
The name ``precise'' has been chosen because for a generic matching one only has $v_\varphi(\sigma) \geq v_\varphi(\tau)$ (when $\sigma\to \tau$ is an edge of $G^\M$), and we require $v_\varphi(\sigma)$ to be precisely $v_\varphi(\tau)+1$.

Assume from now on in this section that $\M$ is a $\varphi$-precise matching.
To simplify the notation, set $(A_*, \de) = ((L_*)_\varphi^\M, \de^\M)$ and $(V_*, \delta) = (C^0_*(K,\K)^\M, \delta^\M)$.
Our aim is to derive a formula for the homology of the Morse complex $A_*$.
Since the differential $\delta$ vanishes between simplices in different connected components of $G^\M$, the complex $(V_*, \delta)$ splits as follows:
\[ (V_*, \delta) = \, \bigoplus_{i\in \I} \; (V_*^i, \delta^i), \]
where
\[ V^i_* = \, \bigoplus_{\mathclap{\substack{
    \sigma \text{ critical}\\
    \sigma \in i}}} \; \K \, \bar e_\sigma \]
and the boundary map $\delta^i\colon V^i_*\to V^i_*$ is the restriction of $\delta$ to $V^i_*$.
The differential $\de$ of $A_*$ is induced by $\delta$, and thus it also vanishes between simplices in different connected components.
Therefore we have an analogous splitting for $(A_*, \de)$:
\[ (A_*, \de) = \, \bigoplus_{i\in \I} \, (A_*^i, \de^i), \]
where
\[ A^i_* = \bigoplus_{\substack{
    \sigma \text{ critical}\\
    \sigma \in i}} \frac{R}{(w_\varphi(\sigma))} \, \bar e_\sigma \]
and the boundary map $\de^i\colon A^i_*\to A^i_*$ is simply the restriction of $\de$ to $A^i_*$.

Fix now a connected component $i\in \I$.
Since $\M$ is precise, there exists some $k\in \Z$ (which depends on $i$) such that $v_\varphi(\sigma) = |\sigma| + k$ for all $\sigma\in i$. Therefore in degree $m$ we have
\[ A_m^i = \bigoplus_{\sigma \in \Cr^i_m} \frac{R}{(\varphi^{m+k})} \, \bar e_\sigma
   = \left( \bigoplus_{\sigma \in \Cr^i_m} \K \, \bar e_\sigma \right) \otimes_{\K} \frac{R}{(\varphi^{m+k})}
   = V_m^i \otimes_\K \frac{R}{(\varphi^{m+k})},
\]
where $\Cr^i_m$ is the set of critical simplices $\sigma$ such that $\sigma \in i$ and $|\sigma|=m$.
By construction the boundary $\de^i_m \colon A^i_m \to A^i_{m-1}$ factors accordingly:
\[ \de^i_m = \delta^i_m \otimes_\K \pi_m, \]
where
\[ \pi_m\colon \frac{R}{(\varphi^{m+k})} \to \frac{R}{(\varphi^{m+k-1})} \]
is the projection induced by the identity $R \to R$.
Since $\Im \delta^i_{m+1} \subseteq \ker \delta^i_m$, each $V^i_m$ splits (as a vector space over $\K$) as a direct sum of linear subspaces:
\[ V^i_m = W^i_{m,1} \oplus W^i_{m,2} \oplus W^i_{m,3}, \]
where $W^i_{m,1} = \Im \delta^i_{m+1}$ and $W^i_{m,1}\oplus W^i_{m,2} = \ker \delta^i_m$.
Then
\begin{align*}
  \ker( \de^i_m ) &= \ker( \delta^i_m \otimes_\K \pi_m ) \\
  &= \left( \left( W^i_{m,1} \oplus W^i_{m,2} \right) \otimes_\K \frac{R}{(\varphi^{m+k})} \right) \oplus \left(W^i_{m,3} \otimes_\K \frac{(\varphi^{m+k-1})}{(\varphi^{m+k})} \right); \\
  \Im( \de^i_{m+1} ) &= \Im( \delta^i_{m+1} \otimes_\K \pi_{m+1} ) \\
  &= W^i_{m,1} \otimes_\K \frac{R}{(\varphi^{m+k})}.
\end{align*}
Therefore the homology of $(A^i_*, \delta^i)$ is given, as an $R$-module, by
\begin{align*}
  H_m(A^i_*) &= \frac{ \ker(\de^i_m) }{ \Im(\de^i_{m+1}) } \\
  &= \left( W^i_{m,2} \otimes_\K \frac{R}{(\varphi^{m+k})} \right) \oplus \left(W^i_{m,3} \otimes_\K \frac{(\varphi^{m+k-1})}{(\varphi^{m+k})} \right) \\
  &\cong \left( H_m(V_*^i, \delta^i) \otimes_\K \frac{R}{(\varphi^{m+k})} \right) \oplus \left( \K^{\rk \delta^i_m} \otimes_\K \frac{R}{(\varphi)} \right).
\end{align*}
In the last isomorphism we used the fact that $\dim W^i_{m,3} = \dim V^i_m - \dim \ker \delta^i_m = \rk \delta^i_m$.

Recall that the previous formula holds for a fixed connected component $i\in \I$, and $k$ depends on $i$.
Since we now need to take the direct sum over the connected components, let $k_i$ be the value of $k$ for the component $i$.

\begin{theorem}
  The homology of $(L_*)_\varphi$ is given, as an $R$-module, by
  \[ H_m((L_*)_\varphi, \de) \cong
  \left( \bigoplus_{i\in \I} H_m(V_*^i) \otimes_\K \frac{R}{(\varphi^{m+k_i})} \right) \oplus \left( \K^{\rk \delta_m} \otimes_\K \frac{R}{(\varphi)} \right). \]
  \label{thm:homology-phi-component}
\end{theorem}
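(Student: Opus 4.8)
The statement to prove, Theorem \ref{thm:homology-phi-component}, is essentially obtained by summing the per-component computation that precedes it over all connected components $i \in \I$. The plan is therefore to assemble the pieces already established in the excerpt rather than to start from scratch.

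\textbf{Outline of the argument.} First I would invoke the splitting $(A_*, \de) = \bigoplus_{i\in\I}(A_*^i, \de^i)$, which holds because the Morse differential $\de^\M$ vanishes between critical simplices lying in distinct connected components of the incidence graph $G^\M$ (this follows from the definition of $G^\M$: an edge $\sigma\to\tau$ exists precisely when $[\sigma:\tau]^\M \neq 0$ in $\K$, hence in $R$ since $\K \subseteq R$ and $R$ is torsion-free over $\K$). Homology commutes with finite direct sums of complexes, so $H_m((L_*)_\varphi, \de) = \bigoplus_{i\in\I} H_m(A_*^i, \de^i)$. Then I would quote the formula already derived for a single component,
\[ H_m(A^i_*) \cong \left( H_m(V_*^i, \delta^i) \otimes_\K \frac{R}{(\varphi^{m+k_i})} \right) \oplus \left( \K^{\rk \delta^i_m} \otimes_\K \frac{R}{(\varphi)} \right), \]
where $k_i$ is the (component-dependent) integer with $v_\varphi(\sigma) = |\sigma| + k_i$ for all $\sigma \in i$, guaranteed to exist by $\varphi$-preciseness of $\M$. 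Summing over $i$ gives the first summand of the theorem verbatim. For the second summand I would note that $\rk\delta_m = \sum_{i\in\I}\rk\delta^i_m$ because $\delta_m$ is block-diagonal with blocks $\delta^i_m$, and that $\bigoplus_{i\in\I}\left(\K^{\rk\delta^i_m}\otimes_\K R/(\varphi)\right) \cong \K^{\rk\delta_m}\otimes_\K R/(\varphi)$ since tensoring with $R/(\varphi)$ over $\K$ commutes with direct sums. Combining the two identifications yields the claimed expression.

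\textbf{Bookkeeping to be careful about.} The one genuine subtlety is justifying the per-component formula rigorously, i.e.\ the factorization $\de^i_m = \delta^i_m \otimes_\K \pi_m$ and the subsequent kernel/image computation. This requires knowing that within the component $i$, every critical simplex of dimension $m$ carries the same torsion $R/(\varphi^{m+k_i})$ — which is exactly the content of preciseness — so that $A^i_m = V^i_m \otimes_\K R/(\varphi^{m+k_i})$ canonically, with the tensor decomposition respecting the (integer) incidence coefficients $[\sigma:\tau]^\M$. Under the identification $R/(\varphi^{m+k_i}) = \K[\varphi]/(\varphi^{m+k_i})$ (using $\varphi$ irreducible and $\K \subseteq R$), the map $\pi_m$ really is $\K$-linear, so the tensor factorization is legitimate and the splitting $V^i_m = W^i_{m,1}\oplus W^i_{m,2}\oplus W^i_{m,3}$ into $\Im\delta^i_{m+1}$, a complement inside $\ker\delta^i_m$, and a complement of $\ker\delta^i_m$, transports cleanly through $\otimes_\K(-)$.

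\textbf{Main obstacle.} There is no deep obstacle here: the theorem is a formal consequence of the componentwise analysis already carried out in the text plus the observation that both homology and the relevant tensor products commute with finite direct sums. The only place demanding attention is making explicit that the decomposition of $V^i_*$ into the three subspaces $W^i_{m,j}$ (and hence the isomorphism $\dim W^i_{m,3} = \rk\delta^i_m$) is compatible with the factorization of $\de^i_m$; once that is granted — and it is granted in the discussion immediately preceding the statement — the proof is a one-line assembly. I would thus present the proof as: "This follows by taking the direct sum over $i\in\I$ of the formula for $H_m(A^i_*)$ established above, using that $\rk\delta_m = \sum_i \rk\delta^i_m$ and that $\otimes_\K R/(\varphi)$ commutes with direct sums."
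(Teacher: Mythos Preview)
Your proposal is essentially correct and follows the same approach as the paper: split the Morse complex along connected components of $G^\M$, apply the per-component formula already derived, and collect the second summands using $\rk\delta_m = \sum_i \rk\delta^i_m$. The only step you leave implicit is the very first one: before splitting you must invoke Theorem~\ref{thm:morsecomplex} to identify $H_m((L_*)_\varphi,\de)$ with $H_m((L_*)_\varphi^\M,\de^\M) = H_m(A_*,\de)$, since the direct-sum decomposition holds for the Morse complex $A_*$, not for $(L_*)_\varphi$ itself.
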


\begin{proof}
  By Theorem \ref{thm:morsecomplex}, $H_m((L_*)_\varphi, \de) \cong H_m((L_*)_\varphi^\M, \de^\M)$.
  Using what we have done in this section, we have that
  \begin{align*}
    H_m((L_*)_\varphi^\M, \de^\M) &= H_m(A_*, \de) \\
    &= \bigoplus_{i\in \I} H_m(A_*^i, \de) \\
    &\cong \left( \bigoplus_{i\in \I} H_m(V_*^i) \otimes_\K \frac{R}{(\varphi^{m+k_i})} \right) \oplus \left( \left( \bigoplus_{i\in \I} \K^{\rk \delta^i_m} \right) \otimes_\K \frac{R}{(\varphi)} \right) \\
    &\cong \left( \bigoplus_{i\in \I} H_m(V_*^i) \otimes_\K \frac{R}{(\varphi^{m+k_i})} \right) \oplus \left( \K^{\rk \delta_m} \otimes_\K \frac{R}{(\varphi)} \right). \qedhere
  \end{align*}
\end{proof}

Let us see how the existence of a precise matching can be interpreted in terms of the spectral sequence associated to the weighted sheaf (see Theorem \ref{thm:spectral-sequence}).

\begin{proposition}
  If a $\varphi$-precise matching $\M$ exists, then the spectral sequence $E_{p,q}^r$ associated to the weighted sheaf $\S_\varphi$ collapses at the $E^2$-page.
  \label{prop:precise-matching-spectral-sequence}
\end{proposition}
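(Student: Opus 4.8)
The plan is to compute the spectral sequence of Theorem \ref{thm:spectral-sequence} directly from the Morse-theoretic description given by Theorem \ref{thm:spectral-sequence-morse-complex}, exploiting the preciseness of $\M$ to show that the $E^1$-differential already captures all of the homology. Recall that by Theorem \ref{thm:spectral-sequence-morse-complex} we have $E^1_{p,q} \cong H_{p+q}(\F^p(L_*)_\varphi^\M)$, where the quotient complex $\F^p(L_*)_\varphi^\M$ is generated by the critical simplices $\sigma$ with $v_\varphi(\sigma)=p$, each contributing a factor $R/(\varphi^p)$, with differential induced by $\de^\M$. The key observation is that if $\M$ is $\varphi$-precise, then a critical simplex $\sigma$ with $v_\varphi(\sigma)=p$ has $|\sigma| = p - k_i$ on its connected component $i$; hence within a fixed component the degree $m=|\sigma|$ and the weight $p=v_\varphi(\sigma)$ differ by the constant $-k_i$, so the chain groups $\F^p(L_*)_\varphi^\M$ are concentrated in the single homological degree $m = p - k_i$ per component.

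First I would make precise the splitting along connected components: since $\de^\M$ vanishes between distinct components of $G^\M$, both $\F^p(L_*)_\varphi^\M$ and its differential split as direct sums over $i\in\I$, and the $i$-th summand of $\F^p(L_*)_\varphi^\M$ lives entirely in homological degree $m=p-k_i$ (it is $V^i_{p-k_i}\otimes_\K R/(\varphi^p)$ with zero differential, because the differential of this quotient complex would go to degree $m-1$ which corresponds to a different value of $p$). Therefore $E^1_{p,q}$ is simply $\bigoplus_i V^i_{p-k_i}\otimes_\K R/(\varphi^p)$ restricted to those $i$ with $p+q = p-k_i$, i.e. $q = -k_i$. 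The differential $d^1_{p,q}\colon E^1_{p,q}\to E^1_{p-1,q}$ is, by Theorem \ref{thm:spectral-sequence-morse-complex}, induced by $\de^\M$; on the component $i$ it is exactly $\delta^i_{p-k_i}\otimes_\K\pi_p$ in the notation of the section, which is the same factorization used in the computation preceding Theorem \ref{thm:homology-phi-component}.

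Next I would identify the $E^2$-page. Taking homology of $(E^1_{\bullet,q}, d^1)$ along the $p$-direction and using the computation already carried out before Theorem \ref{thm:homology-phi-component}, one gets that $E^2_{p,q}$ is $\bigl(H_{p-k_i}(V^i_*)\otimes_\K R/(\varphi^p)\bigr)\oplus\bigl(\K^{\rk\delta^i_{p-k_i}}\otimes_\K R/(\varphi)\bigr)$ for the (unique) component $i$ with $q=-k_i$, summed over all such $i$. Summing over $q$, the total degree $m=p+q$ contribution matches exactly the formula of Theorem \ref{thm:homology-phi-component} for $H_m((L_*)_\varphi)$. Since the spectral sequence converges to $H_*((L_*)_\varphi)$ and the $E^2$-page already has the right total size in each total degree, all higher differentials $d^r$ for $r\geq 2$ must vanish; hence the sequence collapses at $E^2$. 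The main obstacle I anticipate is the bookkeeping needed to argue the collapse rigorously: one must either invoke that the associated graded of the abutment (for the induced filtration on $H_*((L_*)_\varphi)$) is isomorphic to $E^\infty$ and compare $\K$-dimensions degree by degree (using that everything is finitely generated over the PID $R$, so the ranks of free parts and the torsion multiplicities are well defined), or argue directly that $d^r$ for $r\geq2$ must be zero because it would map between summands forcing an inconsistency with Theorem \ref{thm:homology-phi-component}. Either way the essential input is that preciseness collapses the bigrading onto a line-per-component, so that $E^1$ is a genuine chain complex in a single variable whose homology is computed once and for all at the $E^2$ stage.
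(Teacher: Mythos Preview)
Your proposal is correct and makes the same key observation as the paper: after splitting over connected components $i\in\I$, preciseness forces $\F^p A^i_*$ to be concentrated in the single degree $m=p-k_i$, so the $i$-th summand of $E^1$ lives entirely in the row $q=-k_i$. However, once you have this, the paper concludes much more directly than you do. Since the differential $d^r$ has bidegree $(-r,r-1)$, for $r\geq 2$ it changes $q$ by $r-1\neq 0$; on each component the source and target rows differ, so $d^{r,i}=0$ for all $r\geq 2$ and hence $E^{2,i}=E^{\infty,i}$. Summing over $i$ gives the collapse.

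Your route via computing $E^2$ explicitly and comparing its total size to the abutment $H_*((L_*)_\varphi)$ from Theorem \ref{thm:homology-phi-component} is valid in principle, but it is circuitous and, as you yourself note, requires some care (comparing $R$-module invariants rather than just $\K$-dimensions, and knowing that the filtration on the abutment behaves well). The paper avoids all of this: the single-row-per-component observation already forces collapse for purely formal bidegree reasons, with no need to identify $E^2$ or invoke Theorem \ref{thm:homology-phi-component}.
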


\begin{proof}
  By Theorem \ref{thm:spectral-sequence-morse-complex}, the $E^1$-page can be computed through the Morse complex of our matching $\M$:
  \[ E^1_{p,q} \cong  H_{p+q}(\F^p (L_*)_\varphi^\M), \]
  and the differential $d^1_{p,q}$ is induced by the boundary of the Morse complex.
  The spectral sequence then splits as a direct sum over the connected components of $\smash{G^\M}$:
  \[ E_{p,q}^r = \bigoplus_{i\in \I} E_{p,q}^{r,i}, \]
  where $E^{1,i}_{p,q} \cong H_{p+q}(\F^p A^i_*)$.
  Since the matching is precise, for $m \neq p-k_i$ we have
  \[ \F^p A^i_m \,=\, F^p A^i_m \,/\, F^{p-1} A^i_m = 0. \]
  This means that the page $E^{0,i}_{p,q}\cong E^{1,i}_{p,q}$ is non-trivial only in the row $q = -k_i$, and the entire spectral sequence $E_{p,q}^r$ collapses at the $E^2$-page.
\end{proof}

What we have done so far in this section assumed $\varphi$ to be some fixed irreducible element of the PID $R$.
In order to recover the full homology of $L_*$ we need to make $\varphi$ vary among all equivalence classes of irreducible elements of $R$ modulo the units.
Suppose from now on to have a $\varphi$-precise matching $\M_\varphi$ on $\S_\varphi$, for each $\varphi$.
The following results follows immediately from Theorem \ref{thm:homology-phi-component}, provided that we add a ``$\varphi$'' subscript (or superscript) to all the quantities that depend on the matching $\M_\varphi$.

\begin{provedtheorem}
  The homology of $L_*$ is given, as an $R$-module, by
  \[ H_m(L_*, \de) \cong
  \bigoplus_\varphi \left( \bigoplus_{i\in \I_\varphi} H_m(V_*^{\varphi,i}) \otimes_\K \frac{R}{(\varphi^{m+k_{\varphi,i}})} \right) \oplus \left( \K^{\rk \delta_m^\varphi} \otimes_\K \frac{R}{(\varphi)} \right). \qedhere \]
  \label{thm:homology-L}
\end{provedtheorem}

For later applications, we finally need to study how the isomorphism of Theorem \ref{thm:homology-L} behaves with respect to the projection $\pi\colon C^0_* \to L_*$ of Remark \ref{rmk:projections}.
This projection is the direct sum over $\varphi$ of the projections
\[ \pi_\varphi \colon C^0_* \to (L_*)_\varphi. \]
Instead of studying the induced map $(\pi_\varphi)_*\colon H_m(C^0_*) \to H_m((L_*)_\varphi)$, we study the map
\[ (\bar \pi_\varphi)_* \colon H_m((C^0_*)^{\M}) \to H_m((L_*)^{\M}_\varphi) \]
between the Morse complexes (here $\M=\M_\varphi$ is a precise matching which depends on $\varphi$).
For $i\in \I_\varphi$, let $\pi_i \colon (C^0_*)^\M \to V^{\varphi,i}_* \otimes_\K R \subseteq (C^0_*)^\M$ be the projection on the subcomplex corresponding to the connected component $i$, and let $(\pi_i)_*$ be the map induced in homology.
Let $[c] \in H_m((C^0_*)^\M)$, for some cycle $c \in \ker \delta^\M \subseteq (C^0_m)^\M$.
Applying the map $(\bar \pi_\varphi)_* \colon H_m((C_*^0)^\M) \to H_m((L_*)^\M_\varphi)$ we obtain
\begin{align*}
  (\bar \pi_\varphi)_* ([c])
  &= (\bar \pi_\varphi)_* \left( \sum_{i\in \I_\varphi} (\pi_i)_* ([c]) \right) \\
  &= \sum_{i\in \I_\varphi} (\bar\pi_\varphi)_* \Big((\pi_i)_* ([c]) \Big).
\end{align*}
Applying the isomorphism of Theorem \ref{thm:homology-phi-component}, this element is sent to
\[ \sum_{i\in \I_\varphi} \Big((\pi_i)_* ([c])\Big) \otimes_\K [1]\; \in\, \bigoplus_{i\in \I_\varphi} H_m(V_*^{\varphi,i}) \otimes_\K \frac{R}{(\varphi^{m+k_{\varphi,i}})}. \]

We are going to use these computations to prove the following result which describes the kernel and the cokernel of $\pi_*$.

\begin{proposition}
  The cokernel of $\pi_*\colon H_m(C^0_*) \to H_m(L_*)$ is given by
  \[ \coker \pi_* \cong \, \bigoplus_\varphi \, \left( \frac{R}{(\varphi)} \right)^{\oplus \rk\delta_m^\varphi}. \]
  In addition, the kernel of $\pi_*$ is a free $R$-module isomorphic to $H_m(C^0_*)$.
  \label{prop:kernel-and-cokernel}
\end{proposition}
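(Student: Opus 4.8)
The plan is to work one prime $\varphi$ at a time and then take the direct sum, exploiting the isomorphism of Theorem~\ref{thm:homology-L} (equivalently Theorem~\ref{thm:homology-phi-component}) together with the commuting square of Remark~\ref{rmk:projections}, which reduces everything to the algebraic map $(\bar\pi_\varphi)_*\colon H_m((C^0_*)^\M)\to H_m((L_*)^\M_\varphi)$ between Morse complexes. First I would fix $\varphi$ and the precise matching $\M=\M_\varphi$, and recall the splitting $(V_*,\delta)=\bigoplus_{i\in\I}(V^i_*,\delta^i)$ and $A^i_m = V^i_m\otimes_\K R/(\varphi^{m+k_i})$ established earlier in the section. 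Since $C^0_*$ has the homology of $K$ with $\K$-coefficients and $H_m((C^0_*)^\M)\cong H_m(C^0_*(K;\K))=\bigoplus_i H_m(V^i_*)$, the computation done just before the statement shows that under the isomorphism of Theorem~\ref{thm:homology-phi-component} the map $(\bar\pi_\varphi)_*$ is identified with
\[
  \bigoplus_{i\in\I} H_m(V^i_*) \;\longrightarrow\; \left(\bigoplus_{i\in\I} H_m(V^i_*)\otimes_\K \frac{R}{(\varphi^{m+k_i})}\right)\oplus\left(\K^{\rk\delta_m}\otimes_\K\frac{R}{(\varphi)}\right),
\]
sending $\sum_i\xi_i\mapsto \sum_i \xi_i\otimes[1]$, i.e.\ it is $\bigoplus_i(\mathrm{id}_{H_m(V^i_*)}\otimes\,\iota_i)$ composed with the inclusion of the first summand, where $\iota_i\colon \K\hookrightarrow R/(\varphi^{m+k_i})$ picks out the class of $1$.

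Next I would read off kernel and cokernel of this explicit map. On each component $i$, the map $H_m(V^i_*)\to H_m(V^i_*)\otimes_\K R/(\varphi^{m+k_i})$, $\xi\mapsto\xi\otimes[1]$, is injective because $R/(\varphi^{m+k_i})$ is a free (hence faithfully flat) $\K$-module and $1$ generates a free $\K$-summand of it (here I use $m+k_i\ge 1$, which holds since $v_\varphi(\sigma)\ge 1$ for every simplex actually appearing in $(L_*)_\varphi$, together with the fact that $\varphi\nmid 1$ in $R$). Hence $\ker(\bar\pi_\varphi)_*=0$ — but this is not yet the claim about $\pi_*$, because $(C^0_*)^\M$ computes $\K$-homology whereas $C^0_*$ computes $R$-homology; I will return to this. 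For the cokernel: the first summand $\bigoplus_i H_m(V^i_*)\otimes_\K R/(\varphi^{m+k_i})$ has, as $R$-module, cokernel of $\xi\mapsto\xi\otimes[1]$ on each factor equal to $H_m(V^i_*)\otimes_\K\big(R/(\varphi^{m+k_i})\big)/(\varphi^{m+k_i-1})$ — wait, more carefully: the cokernel of $\K\xrightarrow{[1]}R/(\varphi^{N})$ is $R/(\varphi^{N})$ modulo the $\K$-line spanned by $1$; since $R/(\varphi)$ is a field extension of $\K$ and the composite $\K\to R/(\varphi^N)\to R/(\varphi)$ is injective, the cokernel is an $R/(\varphi^N)$-module which, as will be seen, need not vanish. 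This is the delicate point, so let me instead package it differently: I claim the cokernel of the \emph{whole} map $(\bar\pi_\varphi)_*$ is $\big(R/(\varphi)\big)^{\oplus\rk\delta_m}$. The second summand $\K^{\rk\delta_m}\otimes_\K R/(\varphi)$ is entirely cokernel; for the first summand I must show the image of $\xi\mapsto\xi\otimes[1]$ actually \emph{surjects}, which it cannot literally do. The resolution: the correct source is not $H_m((C^0_*)^\M)$ but $H_m(C^0_*)$ with $R$-coefficients, and $H_m(C^0_*(K;R))=H_m(C^0_*(K;\K))\otimes_\K R = \bigoplus_i H_m(V^i_*)\otimes_\K R$ since $R$ is $\K$-flat; the map $\pi_*$ is then $\bigoplus_i(\mathrm{id}\otimes p_i)$ with $p_i\colon R\twoheadrightarrow R/(\varphi^{m+k_i})$ on the first block (surjective!) and the projection $R\to R/(\varphi)$ landing in the second block via the rank term. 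So the first block contributes nothing to the cokernel, the second contributes exactly $\big(R/(\varphi)\big)^{\oplus\rk\delta_m^\varphi}$, and summing over $\varphi$ gives the stated formula for $\coker\pi_*$.

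For the kernel, with the corrected description $\pi_*=\bigoplus_i(\mathrm{id}_{H_m(V^i_*)}\otimes p_i)\ \oplus\ (\text{rank map into the }R/(\varphi)\text{ term})$ on $H_m(C^0_*)=\bigoplus_i H_m(V^i_*)\otimes_\K R$, the kernel is $\bigoplus_i H_m(V^i_*)\otimes_\K (\varphi^{m+k_i})$, which is a free $R$-module (each $(\varphi^{N})\cong R$ as $R$-module, and $H_m(V^i_*)$ is a finite-dimensional $\K$-space) of the same rank as $H_m(C^0_*(K;\K))$, hence abstractly isomorphic to $H_m(C^0_*)=H_m(C^0_*(K;\K))\otimes_\K R$ as $R$-modules. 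I would also double-check this against the long exact sequence~\eqref{eq:long-rank1}: $\ker\pi_*$ is the image of $H_m(L_*)\to H_{m-1}(C_*)$... — actually the cleaner consistency check is that $\ker(\pi_*\colon H_m(C^0_*)\to H_m(L_*))$ equals the image of $\Delta_*$, and the kernel being free of full rank says $\Delta_*$ is injective on $H_m(C_*)$ modulo torsion with free image, which matches $\Delta$ being an injective chain map with $\varphi$-power cokernel. The main obstacle, as the discussion above indicates, is bookkeeping: keeping straight the two coefficient rings ($\K$ versus $R$) and making sure the identification of $\pi_*$ with its explicit matrix form is legitimate — this requires invoking Remark~\ref{rmk:projections} to pass to Morse complexes, then $\K$-flatness of $R$ to lift the $\K$-computation to an $R$-computation, and finally the precise-matching hypothesis (through $A^i_m=V^i_m\otimes_\K R/(\varphi^{m+k_i})$ and $\de^i_m=\delta^i_m\otimes\pi_m$) to guarantee that the tensor decomposition is compatible with the boundary maps. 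Once that scaffolding is in place the kernel and cokernel are read off by pure linear algebra over the PID $R$.
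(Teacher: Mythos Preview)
Your per-$\varphi$ analysis is correct and in fact cleaner than the paper's: once you identify $(C^0_*)^{\M_\varphi}\cong V^\varphi_*\otimes_\K R$ and $A^i_m=V^i_m\otimes_\K R/(\varphi^{m+k_i})$, the chain-level projection $\bar\pi_\varphi$ really is $\bigoplus_i(\mathrm{id}\otimes p_i)$ landing entirely in the first summand of Theorem~\ref{thm:homology-phi-component}, so $\Im(\pi_\varphi)_*$ is that first summand and $\ker(\pi_\varphi)_*=\bigoplus_i H_m(V^{\varphi,i}_*)\otimes_\K(\varphi^{m+k_{\varphi,i}})$. The paper instead proves $\Im(\pi_\varphi)_*$ by double inclusion, which your explicit description makes unnecessary. (Your parenthetical ``and the projection $R\to R/(\varphi)$ landing in the second block via the rank term'' is a stray remnant of the earlier confusion; the map has no component in the second summand.)

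The genuine gap is the passage from the single-$\varphi$ statements to the global one. You write ``summing over $\varphi$ gives the stated formula for $\coker\pi_*$'', but $\pi_*\colon H_m(C^0_*)\to\bigoplus_\varphi H_m((L_*)_\varphi)$ is a \emph{diagonal} map, so a priori $\Im\pi_*\subsetneq\bigoplus_\varphi\Im(\pi_\varphi)_*$ and hence $\coker\pi_*$ could be strictly larger than $\bigoplus_\varphi\coker(\pi_\varphi)_*$. What is needed---and what the paper supplies---is a Chinese Remainder argument: given a target in one $\varphi$-component, multiply a chosen preimage by an $\alpha\in R$ with $\alpha\equiv 1\pmod{\varphi^{m+k_{\varphi,i}}}$ and $\alpha\equiv 0$ modulo the relevant powers of all other irreducibles $\eta$, so that the image in every $\eta$-component vanishes. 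This uses only that distinct cyclotomic factors are coprime and that $K$ is finite. The same issue recurs for the kernel: you compute $\ker(\pi_\varphi)_*$ but $\ker\pi_*=\bigcap_\varphi\ker(\pi_\varphi)_*$, and you never argue that this intersection still has full rank. The paper handles this by noting that $\ker\pi_*$ is a submodule of a finitely generated free $R$-module (hence free) and contains $\alpha\cdot H_m(C^0_*)$ for any nonzero $\alpha$ in the finite intersection $\bigcap_{\varphi,i}(\varphi^{m+k_{\varphi,i}})$, giving the rank bound. Both fixes are short, but without them the ``sum over $\varphi$'' step is not justified.
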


\begin{proof}
  Throughout the proof, consider the following $R$-modules identified one with each other, without explicitly mentioning the isomorphisms between them:
  \begin{align*}
   H_m(L_*) &\cong \bigoplus_\varphi H_m((L_*)_\varphi) \\
   &\cong \bigoplus_\varphi H_m((L_*)^\M_\varphi) \\
   &\cong \bigoplus_\varphi \left( \bigoplus_{i\in \I_\varphi} H_m(V_*^{\varphi,i}) \otimes_\K \frac{R}{(\varphi^{m+k_{\varphi,i}})} \right) \oplus \left( \K^{\rk \delta_m^\varphi} \otimes_\K \frac{R}{(\varphi)} \right).
  \end{align*}
  Recall that the matching $\M$ depends on $\varphi$, although we write $\M$ instead of $\M_\varphi$ in order to make the notations more readable.
  Also recall that the isomorphisms $H_m((L_*)_\varphi) \cong H_m((L_*)^\M_\varphi)$ occur in the commutative diagram of Remark \ref{rmk:projections}.
  
 We want to show that the image of $\pi_*\colon H_m(C^0_*) \to H_m(L_*)$ is given by
  \[ \Im \pi_* = \, \bigoplus_\varphi \left( \bigoplus_{i\in \I_\varphi} H_m(V_*^{\varphi,i}) \otimes_\K \frac{R}{(\varphi^{m+k_{\varphi,i}})} \right)
  \subseteq H_m(L_*). \]
Let $\psi_\varphi\colon H_*(C^0_*) \to H_*((L_*)^\M_\varphi)$ be the map defined as the composition
  \[ H_*(C^0_*) \xlongrightarrow{\cong} H_*((C^0_*)^\M) \xrightarrow{(\bar \pi_\varphi)_*} H_*((L_*)^\M_\varphi). \]
  By commutativity of the diagram of Remark \ref{rmk:projections}, the image of $\pi_* = \bigoplus_\varphi(\pi_\varphi)_*$ is the same as the image of
  \[ \bigoplus_\varphi \psi_\varphi \colon H_*(C^0_*) \to \bigoplus_\varphi H_*((L_*)^\M_\varphi). \]
  We have already proved that, for any $[c]\in H_m((C^0_*)^\M)$,
  \[ (\bar \pi_\varphi)_*([c]) = \sum_{i\in \I_\varphi} \Big((\pi_i)_* ([c])\Big) \otimes_R [1]\; \in\, \bigoplus_{i\in \I_\varphi} H_m(V_*^{\varphi,i}) \otimes_\K \frac{R}{(\varphi^{m+k_{\varphi,i}})}, \]
  which means in particular that
  \[ \Im (\bar \pi_\varphi)_* \subseteq \bigoplus_{i\in \I_\varphi} H_m(V_*^{\varphi,i}) \otimes_\K \frac{R}{(\varphi^{m+k_{\varphi,i}})}. \]
  Therefore we immediately have the inclusion
  \[ \Im(\pi_*) \subseteq \sum_\varphi \Im \psi_\varphi = \sum_\varphi \Im (\bar \pi_\varphi)_* \subseteq \bigoplus_\varphi \left( \bigoplus_{i\in \I_\varphi} H_m(V_*^{\varphi,i}) \otimes_\K \frac{R}{(\varphi^{m+k_{\varphi,i}})} \right). \]
  To prove the opposite inclusion, we show that any element of the form
  \[ [c] \otimes_\K [1] \in H_m(V_*^{\varphi,i}) \otimes_\K \frac{R}{(\varphi^{m+k_{\varphi,i}})} \]
  is in the image of $\pi_*$ (for any fixed $\varphi$ and $i$).
  To do so, choose $\alpha\in R$ such that $\alpha \equiv 1 \pmod{\varphi^{m+k_{\varphi,i}}}$ and $\alpha \equiv 0 \pmod{\eta^{m+k_{\eta, j}}}$ for any irreducible element $\eta \neq \varphi$ which divides some weight $\w(\sigma)$, and for any connected component $j\in \I_{\eta}$ (there is only a finite number of such $\eta$ up to multiplication by units, because $K$ is finite).
  The element $c\otimes_\K \alpha$ is a cycle in $V_m^{\varphi,i} \otimes_\K R \subseteq C^0_m(K,\K)^\M \otimes_\K R \cong (C^0_m)^\M$. Then, if $[\tilde c]$ is the preimage of $[c\otimes_\K \alpha]$ under the isomorphism $H_*(C^0_*) \xlongrightarrow{\cong} H_*((C^0_*)^\M)$, we have that:
  \begin{align*}
    \psi_\varphi([\tilde c]) &= (\bar \pi_\varphi)_*([c \otimes_\K \alpha]) = [c] \otimes_\K [1] \in H_m(V_*^{\varphi,i}) \otimes_\K \frac{R}{(\varphi^{m+k_{\varphi,i}})}; \\
    \psi_\eta([\tilde c]) &= (\bar \pi_\eta)_*([c' \otimes_\K \alpha]) = 0 \quad \text{for any $\eta\neq \varphi$},
  \end{align*}
  where $[c']$ is the image of $[c]$ under the isomorphism
  \[ H_*((C^0_*)^{\M_\varphi}) \xlongrightarrow{\cong} H_*(C^0_*) \xlongrightarrow{\cong} H_*((C^0_*)^{\M_\eta}). \]
  Therefore $[c]\otimes_\K [1]$ is in the image of $\pi_*$.
  We have thus proved that
  \[ \Im \pi_* = \, \bigoplus_\varphi \left( \bigoplus_{i\in \I_\varphi} H_m(V_*^{\varphi,i}) \otimes_\K \frac{R}{(\varphi^{m+k_{\varphi,i}})} \right). \]
  Then the cokernel of $\pi_*$ can be easily computed:
  \[ \coker \pi_* = \frac{H_m(L_*)}{\Im \pi_*} = \bigoplus_\varphi \left( \K^{\rk \delta_m^\varphi} \otimes_\K \frac{R}{(\varphi)} \right) = \bigoplus_\varphi \, \left( \frac{R}{(\varphi)} \right)^{\oplus \rk\delta_m^\varphi}. \]
  
  The $R$-module $H_m(C^0_*) \cong H_m(C^0_*(K; \K)) \otimes_\K R$ is free and finitely generated, because $H_m(C^0_*(K; \K))$ is a finite-dimensional vector space over $\K$ (recall that $K$ is a finite simplicial complex).
  The kernel of $\pi_*$ is a submodule of $H_m(C^0_*)$, so it is itself a free $R$-module with lower or equal rank.
  Let $[c_1], \dots, [c_k]$ be an $R$-base of $H_m(C^0_*)$.
  Consider the non-zero ideal
  \[ I = \bigcap_\varphi \bigcap_{i\in\I_\varphi} (\varphi^{m+k_i}) \subseteq R, \]
  where $\varphi$ varies among the (finitely many) irreducible elements which divide some weight $\w(\sigma)$ (for $\sigma \in K$).
  Fix any non-zero element $\alpha \in I$.
  Then the elements $\alpha[c_1], \dots, \alpha[c_k]$ generate a free submodule of $\ker \pi_*$ of rank $k=\rk H_m(C^0_*)$.
  Therefore $\ker \pi_*$ and $H_m(C^0_*)$ have the same rank, so they are isomorphic as $R$-modules.
\end{proof}

\section{Precise matchings for Artin groups}
\label{sec:precise-matchings-artin-groups}

Consider now the case of Artin groups, as in Section \ref{sec:weighted-sheaves-artin-groups}.
For a Coxeter system $(\W,S)$ we have constructed a weighted sheaf $\S = (K,R,\w)$
with
\[ K = \{ \sigma\subset S \mid \mbox{the parabolic subgroup $\W_\sigma$ is finite}\} \]
and $R = \Q[q^{\pm 1}]$.
The associated weighted complex $L_*$ fits into the short exact sequence
\[ 0 \to C_* \xrightarrow{\Delta} C^0_* \xrightarrow{\pi} L_* \to 0, \]
which gives rise to the long exact sequence \eqref{eq:long-rank1}:
\begin{equation*}
  \dots \xrightarrow{\pi_*} H_{k+1}(L_*) \rightarrow H_k(C_*) \xrightarrow{\Delta_*} H_k(C_*^0) \xrightarrow{\pi_*} H_k(L_*) \rightarrow H_{k-1}(C_*) \xrightarrow{\Delta_*} \dots
\end{equation*}
In order to compute $H_*(C_*) = H_*(\XW; R)$, we split this long exact sequence into the short exact sequences
\[ 0 \to \coker \pi_* \to H_m(C_*) \xrightarrow{\Delta_*} \ker \pi_* \to 0, \]
where on the left we have the cokernel of $\pi_* \colon H_{m+1}(C^0_*) \to H_{m+1}(L_*)$ and on the right we have the kernel of $\pi_*\colon H_m(C^0_*) \to H_m(L_*)$.
Since $\ker \pi_*$ is a free $R$-module, these short exact sequences split:
\[ H_m(C_*) \cong \coker \pi_* \oplus \ker \pi_*. \]

Recall that the only irreducible elements of $R$ that occur in the factorization of the weights are the cyclotomic polynomials $\varphi_d$ for $d\geq 2$.
As in Section \ref{sec:precise-matchings}, suppose from now on that we have constructed a $\varphi$-precise matching $\M_\varphi$ for each cyclotomic polynomial $\varphi = \varphi_d$ (with $d\geq 2$).
Then we have an explicit description of $\coker\pi_*$ and $\ker\pi_*$ thanks to Proposition \ref{prop:kernel-and-cokernel}, and we obtain the following result.

\begin{provedtheorem}
  Under the above hypothesis, the homology of $\XW$ with coefficients in the representation $R=\Q[q^{\pm 1}]$ is given by
  \[ H_m(\XW; R) \cong \left( \bigoplus_\varphi \left( \frac{R}{(\varphi)} \right)^{\oplus \rk \delta_{m+1}^\varphi} \right) \oplus H_m(C^0_*). \qedhere \]
  \label{thm:homology-artin-groups}
\end{provedtheorem}

In particular the term $H_m(C^0_*)$ gives the free part of the homology, and the other direct summands give the torsion part.
The torsion part actually takes a very particular form, and we are going to highlight this in the following result.

\begin{provedtheorem}
  Let $\GW$ be an Artin group that admits a $\varphi$-precise matching for all cyclotomic polynomials $\varphi=\varphi_d$ (with $d\geq 2$). Then the homology $H_*(\XW;R)$ does not have $\varphi^k$-torsion for $k\geq 2$.
  \label{cor:squarefree-torsion}
\end{provedtheorem}

We are particularly interested in Artin groups of finite and affine type.
When $\GW$ is an Artin group of finite type with $n$ generators, $K$ is the full simplicial complex on $S\cong\{1,\dots,n\}$ and therefore $C^0_*$ has trivial homology in every dimension.
Thus the formula of Theorem \ref{thm:homology-artin-groups} reduces to
\[ H_m(\XW; R) \cong \bigoplus_\varphi \left( \frac{R}{(\varphi)} \right)^{\oplus \rk \delta_{m+1}^\varphi}. \]
When $\GW$ is an Artin group of affine type with $n+1$ generators, $K$ is obtained from the full simplicial complex on $S \cong \{0,1, \dots, n\}$ by removing the single top-dimensional simplex.
Then we have
\[ H_m(\XW; R) \cong 
  \begin{cases}
    R & \text{for } m = n, \\
    \bigoplus_\varphi \left( \frac{R}{(\varphi)} \right)^{\oplus \rk \delta_{m+1}^\varphi} & \text{for } m < n.
  \end{cases}
\]

\begin{remark}
  When $\GW$ is an Artin group of finite type, the corresponding reflection arrangement of hyperplanes $\A$ is finite.
  In this case it is well known that there is an $R$-module isomorphism between the twisted homology $H_*(\XW; R)$ and the homology with constant coefficients $H_*(F; \Q)$ of the Milnor fiber $F$ of $\A$ \cite{callegaro2005cohomology}. The $q$-multiplication on the homology of $\XW$ corresponds to the action of the monodromy operator on the homology of $F$.
  If $N=|\A|$, the square of the defining polynomial of the arrangement is $\W$-invariant, thus the order of the monodromy of the Milnor fibration divides $2N$. It follows that the polynomial $q^{2N}-1$ must annihilate the homology.
  Since $q^{2N}-1$ is square-free in characteristic $0,$ the homology cannot have $\varphi^k$-torsion for $k\geq 2$.
  So the conclusion of Theorem \ref{cor:squarefree-torsion} is not surprising in the case of Artin groups of finite type.
  \label{rmk:squarefree-torsion-finite-type}
\end{remark}

In the rest of this paper we are going to construct precise matchings for Artin groups of type $A_n$, $B_n$, $\smash{\tilde A_n}$ and $\smash{\tilde C_n}$. For each of these cases we are then going to: describe the critical simplices with respect to the constructed matching; find all alternating paths and incidence numbers between critical simplices; determine the ranks $\rk \delta^\varphi_*$ and use Theorem \ref{thm:homology-artin-groups} to compute the homology $H_*(\XW;R)$.
The final results are stated in Theorem \ref{teo:simpleind} (case $A_n$), Theorem \ref{thm:homology-Bn} (case $B_n$), Theorem \ref{thm:homology-tAn} (case $\smash{\tilde A_n}$), and Theorem \ref{thm:homology-tCn} (case $\smash{\tilde C_n}$).

\medskip

Let us first introduce some notation.
We will have $S=\{1,2,\dots,n\}$ for the finite cases ($A_n$ and $B_n$), and $S=\{0,1,2,\dots,n\}$ for the affine cases ($\smash{\tilde A_n}$ and $\smash{\tilde C_n}$).
A simplex $\sigma \subseteq S$ will be also represented as a string of bits $\epsilon_i\in\{0,1\}$ (for $i\in S$), where $\epsilon_i = 1$ if $i\in S$ and $\epsilon_i=0$ if $i\not\in S$.
For example, if $S=\{1,2,3,4\}$, the string representation of $\sigma = \{1,2,4\}$ is $1101$.
Also, for $\sigma\subseteq S$ and $v\in S$, let
\[ \sigma \xor v =
  \begin{cases}
    \sigma \cup \{v\} & \text{if } v \not \in \sigma, \\
    \sigma \setminus \{v\} & \text{if } v \in \sigma.
  \end{cases}
\]
(it can be regarded as the bitwise xor between the string representation of $\sigma$ and the string with $\epsilon_v=1$ and $\epsilon_i=0$ for $i\neq v$).

\subsection{Case $A_n$}
\label{sec:precise-matching-An}
Many properties of the homology $H_*(\XW;R)$ in the case $A_n$ have been thoroughly discussed in Section \ref{sec:braid-groups}.
Using precise matchings we are going to obtain a new proof of the formula for the homology of braid groups (Theorem \ref{teo:simpleind}).

Let $S=\{1,2,\dots,n\}$ be the set of vertices of the Coxeter graph of type $A_n$, as in Section \ref{sec:braid-groups}, Figure \ref{fig:An}.
In this case $K$ is the full simplicial complex on $S$.
Fix now an integer $d\geq 2$ and set $\varphi=\varphi_d$. The $\varphi$-weight of a simplex $\sigma\in K$ has been computed in Section \ref{sec:braid-groups} and is as follows:
\[ v_\varphi(\sigma) = \sum_{i=1}^m \omega_\varphi(A_{n_i}), \]
where $n_i$ is the size of the $i$-th (linear) connected component of the subgraph induced by $\sigma$, and $\omega_\varphi(A_k)$ stands for the $\varphi$-weight of a connected component of type $A_k$, given by
\[ \omega_\varphi(A_k) = \left\lfloor \frac{k + 1}{d} \right\rfloor. \]
Fix also an integer $f$ with $0\leq f \leq d-1$. Let $K^A_{n,f}\subseteq K$ be as follows:
\[ K^A_{n,f} = \{ \sigma \in K \mid 1,2,\dots,f \in \sigma \}. \]
Notice that $K^A_{n,0}=K$, and $K^A_{n,f}$ is not a subcomplex of $K$ for $f\geq 1$ (but it is still a subposet of $K$, so it makes sense to define a matching on it).
We are going to construct a $\varphi$-precise matching on $K^A_{n,f}$. In particular, for $f=0$, we will get a $\varphi$-precise matching for $K$. The precise matchings on $\smash{K^A_{n,f}}$ for $f\geq 1$ will become useful when treating the cases $B_n$, $\smash{\tilde A_n}$ and $\smash{\tilde C_n}$.

For a fixed $d$, the matching will be constructed recursively in $n$ and $f$ for $n\geq 0$ and $0\leq f \leq d-1$.
We will write $K_{n,f}$ for $K^A_{n,f}$ throughout Section \ref{sec:precise-matching-An}.
The matching is as follows.
\begin{enumerate}[(a)]
 \item If $\{1,\dots,d-1\} \subseteq \sigma$ then match $\sigma$ with $\sigma\xor d$ (unless $n=d-1$, in which case $\sigma$ is critical).
 Here $\sigma$ is matched with a simplex which also occurs in case (a).
 Notice that for $f=d-1$ case (a) always applies, thus in the subsequent cases we can assume $f\leq d-2$.
 \item Otherwise, if $n=f$ then $\sigma$ is critical.
 \item Otherwise, if $f+1\in \sigma$ then match $\sigma$ with $\sigma\setminus\{f+1\}$.
 Notice that $\sigma\setminus\{f+1\}$ occurs in case (d).
 \item Otherwise, if $\{f+2,\dots,d-1\} \nsubseteq \sigma$ then match $\sigma$ with $\sigma\cup\{f+1\}$.
 Notice that $\sigma\cup\{f+1\}$ occurs in case (c).
 \item We are left with the simplices $\sigma$ such that $\{1,\dots,f,f+2,\dots,d-1\}\subseteq \sigma$ and $f+1\not\in\sigma$. If we ignore the vertices $1,\dots,f+1$ we are left with the simplices on the vertex set $\{f+2,\dots,n\}$ which contain $f+2,\dots,d-1$; relabeling the vertices, these are the same as the simplices on the vertex set $\{1,\dots,n-f-1\}$ which contain $1,\dots,d-2-f$.
 Then construct the matching recursively as in $K_{n-f-1,\; d-2-f}$.
\end{enumerate}

\begin{example}
  For $n=5$, $d=3$ and $f=1$, $K_{n,f}$ contains $2^4=16$ simplices of which $14$ are matched and $2$ are critical.
  For instance, consider the simplex $\sigma = \{1, 4, 5\}$. Case (e) applies because $1\in \sigma$ and $2\not\in\sigma$; the recursion requires us to consider the new simplex $\sigma' = \{2,3\} \in K_{3,0}$. Again case (e) applies because $1 \not\in\sigma'$ and $2\in\sigma'$; the recursion requires us to consider the new simplex $\sigma'' = \{1,2\} \in K_{2,1}$. Finally case (a) applies because $\{1,2\}\subseteq \sigma''$, and $\sigma''$ is critical because $\sigma'' = \{1,2\}$. Therefore $\sigma = \{1, 4, 5\} \in K_{5,1}$ is also critical.
  See Table \ref{table:An-matching-example} for an explicit description of the matching for $K_{5,1}$, $d=3$.
  \begin{table}[htbp]

{\renewcommand{\arraystretch}{1.4}

\begin{tabular}{rcl@{\hspace{13pt}}|c|c}

\multicolumn{3}{c|}{Simplices} & $v_\varphi(\sigma)$ & Step \\

\hline

\begin{tikzpicture}
\begin{scope}[every node/.style={circle,thick,draw,inner sep=2.5}, every label/.style={rectangle,draw=none}]
  \node (1) at (0.0,0) [label={above,minimum height=13}:$ $,fill=black!50] {};
  \node (2) at (0.7,0) [label={above,minimum height=13}:$ $,fill=black!50] {};
  \node (3) at (1.3,0) [label={above,minimum height=13}:$ $,fill=black!50] {};
  \node (4) at (2.0,0) [label={above,minimum height=13}:$ $,fill=black!50] {};
  \node (5) at (2.6,0) [label={above,minimum height=13}:$ $,fill=black!50] {};
\end{scope}
\begin{scope}[every edge/.style={draw=black!60,line width=1.2}]
\end{scope}
\begin{scope}[every edge/.style={draw=black, line width=3}]
  \path (1) edge node {} (2);
  \path (2) edge node {} (3);
  \path (3) edge node {} (4);
  \path (4) edge node {} (5);
\end{scope}
\begin{scope}[every node/.style={draw,inner sep=11.5,yshift=-4}, every label/.style={rectangle,draw=none,inner sep=6.0}, every fit/.append style=text badly centered]
\end{scope}
\end{tikzpicture}
 & $\longrightarrow$ & 
\begin{tikzpicture}
\begin{scope}[every node/.style={circle,thick,draw,inner sep=2.5}, every label/.style={rectangle,draw=none}]
  \node (1) at (0.0,0) [label={above,minimum height=13}:$ $,fill=black!50] {};
  \node (2) at (0.7,0) [label={above,minimum height=13}:$ $,fill=black!50] {};
  \node (3) at (1.3,0) [label={above,minimum height=13}:$ $] {};
  \node (4) at (2.0,0) [label={above,minimum height=13}:$ $,fill=black!50] {};
  \node (5) at (2.6,0) [label={above,minimum height=13}:$ $,fill=black!50] {};
\end{scope}
\begin{scope}[every edge/.style={draw=black!60,line width=1.2}]
  \path (2) edge node {} (3);
  \path (3) edge node {} (4);
\end{scope}
\begin{scope}[every edge/.style={draw=black, line width=3}]
  \path (1) edge node {} (2);
  \path (4) edge node {} (5);
\end{scope}
\begin{scope}[every node/.style={draw,inner sep=11.5,yshift=-4}, every label/.style={rectangle,draw=none,inner sep=6.0}, every fit/.append style=text badly centered]
\end{scope}
\end{tikzpicture}
 & 2 & (a) \\

\begin{tikzpicture}
\begin{scope}[every node/.style={circle,thick,draw,inner sep=2.5}, every label/.style={rectangle,draw=none}]
  \node (1) at (0.0,0) [label={above,minimum height=13}:$ $,fill=black!50] {};
  \node (2) at (0.7,0) [label={above,minimum height=13}:$ $,fill=black!50] {};
  \node (3) at (1.3,0) [label={above,minimum height=13}:$ $,fill=black!50] {};
  \node (4) at (2.0,0) [label={above,minimum height=13}:$ $,fill=black!50] {};
  \node (5) at (2.6,0) [label={above,minimum height=13}:$ $] {};
\end{scope}
\begin{scope}[every edge/.style={draw=black!60,line width=1.2}]
  \path (4) edge node {} (5);
\end{scope}
\begin{scope}[every edge/.style={draw=black, line width=3}]
  \path (1) edge node {} (2);
  \path (2) edge node {} (3);
  \path (3) edge node {} (4);
\end{scope}
\begin{scope}[every node/.style={draw,inner sep=11.5,yshift=-4}, every label/.style={rectangle,draw=none,inner sep=6.0}, every fit/.append style=text badly centered]
\end{scope}
\end{tikzpicture}
 & $\longrightarrow$ & 
\begin{tikzpicture}
\begin{scope}[every node/.style={circle,thick,draw,inner sep=2.5}, every label/.style={rectangle,draw=none}]
  \node (1) at (0.0,0) [label={above,minimum height=13}:$ $,fill=black!50] {};
  \node (2) at (0.7,0) [label={above,minimum height=13}:$ $,fill=black!50] {};
  \node (3) at (1.3,0) [label={above,minimum height=13}:$ $] {};
  \node (4) at (2.0,0) [label={above,minimum height=13}:$ $,fill=black!50] {};
  \node (5) at (2.6,0) [label={above,minimum height=13}:$ $] {};
\end{scope}
\begin{scope}[every edge/.style={draw=black!60,line width=1.2}]
  \path (2) edge node {} (3);
  \path (3) edge node {} (4);
  \path (4) edge node {} (5);
\end{scope}
\begin{scope}[every edge/.style={draw=black, line width=3}]
  \path (1) edge node {} (2);
\end{scope}
\begin{scope}[every node/.style={draw,inner sep=11.5,yshift=-4}, every label/.style={rectangle,draw=none,inner sep=6.0}, every fit/.append style=text badly centered]
\end{scope}
\end{tikzpicture}
 & 1 & (a) \\

\begin{tikzpicture}
\begin{scope}[every node/.style={circle,thick,draw,inner sep=2.5}, every label/.style={rectangle,draw=none}]
  \node (1) at (0.0,0) [label={above,minimum height=13}:$ $,fill=black!50] {};
  \node (2) at (0.7,0) [label={above,minimum height=13}:$ $,fill=black!50] {};
  \node (3) at (1.3,0) [label={above,minimum height=13}:$ $,fill=black!50] {};
  \node (4) at (2.0,0) [label={above,minimum height=13}:$ $] {};
  \node (5) at (2.6,0) [label={above,minimum height=13}:$ $,fill=black!50] {};
\end{scope}
\begin{scope}[every edge/.style={draw=black!60,line width=1.2}]
  \path (3) edge node {} (4);
  \path (4) edge node {} (5);
\end{scope}
\begin{scope}[every edge/.style={draw=black, line width=3}]
  \path (1) edge node {} (2);
  \path (2) edge node {} (3);
\end{scope}
\begin{scope}[every node/.style={draw,inner sep=11.5,yshift=-4}, every label/.style={rectangle,draw=none,inner sep=6.0}, every fit/.append style=text badly centered]
\end{scope}
\end{tikzpicture}
 & $\longrightarrow$ & 
\begin{tikzpicture}
\begin{scope}[every node/.style={circle,thick,draw,inner sep=2.5}, every label/.style={rectangle,draw=none}]
  \node (1) at (0.0,0) [label={above,minimum height=13}:$ $,fill=black!50] {};
  \node (2) at (0.7,0) [label={above,minimum height=13}:$ $,fill=black!50] {};
  \node (3) at (1.3,0) [label={above,minimum height=13}:$ $] {};
  \node (4) at (2.0,0) [label={above,minimum height=13}:$ $] {};
  \node (5) at (2.6,0) [label={above,minimum height=13}:$ $,fill=black!50] {};
\end{scope}
\begin{scope}[every edge/.style={draw=black!60,line width=1.2}]
  \path (2) edge node {} (3);
  \path (3) edge node {} (4);
  \path (4) edge node {} (5);
\end{scope}
\begin{scope}[every edge/.style={draw=black, line width=3}]
  \path (1) edge node {} (2);
\end{scope}
\begin{scope}[every node/.style={draw,inner sep=11.5,yshift=-4}, every label/.style={rectangle,draw=none,inner sep=6.0}, every fit/.append style=text badly centered]
\end{scope}
\end{tikzpicture}
 & 1 & (a) \\

\begin{tikzpicture}
\begin{scope}[every node/.style={circle,thick,draw,inner sep=2.5}, every label/.style={rectangle,draw=none}]
  \node (1) at (0.0,0) [label={above,minimum height=13}:$ $,fill=black!50] {};
  \node (2) at (0.7,0) [label={above,minimum height=13}:$ $,fill=black!50] {};
  \node (3) at (1.3,0) [label={above,minimum height=13}:$ $,fill=black!50] {};
  \node (4) at (2.0,0) [label={above,minimum height=13}:$ $] {};
  \node (5) at (2.6,0) [label={above,minimum height=13}:$ $] {};
\end{scope}
\begin{scope}[every edge/.style={draw=black!60,line width=1.2}]
  \path (3) edge node {} (4);
  \path (4) edge node {} (5);
\end{scope}
\begin{scope}[every edge/.style={draw=black, line width=3}]
  \path (1) edge node {} (2);
  \path (2) edge node {} (3);
\end{scope}
\begin{scope}[every node/.style={draw,inner sep=11.5,yshift=-4}, every label/.style={rectangle,draw=none,inner sep=6.0}, every fit/.append style=text badly centered]
\end{scope}
\end{tikzpicture}
 & $\longrightarrow$ & 
\begin{tikzpicture}
\begin{scope}[every node/.style={circle,thick,draw,inner sep=2.5}, every label/.style={rectangle,draw=none}]
  \node (1) at (0.0,0) [label={above,minimum height=13}:$ $,fill=black!50] {};
  \node (2) at (0.7,0) [label={above,minimum height=13}:$ $,fill=black!50] {};
  \node (3) at (1.3,0) [label={above,minimum height=13}:$ $] {};
  \node (4) at (2.0,0) [label={above,minimum height=13}:$ $] {};
  \node (5) at (2.6,0) [label={above,minimum height=13}:$ $] {};
\end{scope}
\begin{scope}[every edge/.style={draw=black!60,line width=1.2}]
  \path (2) edge node {} (3);
  \path (3) edge node {} (4);
  \path (4) edge node {} (5);
\end{scope}
\begin{scope}[every edge/.style={draw=black, line width=3}]
  \path (1) edge node {} (2);
\end{scope}
\begin{scope}[every node/.style={draw,inner sep=11.5,yshift=-4}, every label/.style={rectangle,draw=none,inner sep=6.0}, every fit/.append style=text badly centered]
\end{scope}
\end{tikzpicture}
 & 1 & (a) \\

\begin{tikzpicture}
\begin{scope}[every node/.style={circle,thick,draw,inner sep=2.5}, every label/.style={rectangle,draw=none}]
  \node (1) at (0.0,0) [label={above,minimum height=13}:$ $,fill=black!50] {};
  \node (2) at (0.7,0) [label={above,minimum height=13}:$ $] {};
  \node (3) at (1.3,0) [label={above,minimum height=13}:$ $,fill=black!50] {};
  \node (4) at (2.0,0) [label={above,minimum height=13}:$ $,fill=black!50] {};
  \node (5) at (2.6,0) [label={above,minimum height=13}:$ $,fill=black!50] {};
\end{scope}
\begin{scope}[every edge/.style={draw=black!60,line width=1.2}]
  \path (1) edge node {} (2);
  \path (2) edge node {} (3);
\end{scope}
\begin{scope}[every edge/.style={draw=black, line width=3}]
  \path (3) edge node {} (4);
  \path (4) edge node {} (5);
\end{scope}
\begin{scope}[every node/.style={draw,inner sep=11.5,yshift=-4}, every label/.style={rectangle,draw=none,inner sep=6.0}, every fit/.append style=text badly centered]
\end{scope}
\end{tikzpicture}
 & $\longrightarrow$ & 
\begin{tikzpicture}
\begin{scope}[every node/.style={circle,thick,draw,inner sep=2.5}, every label/.style={rectangle,draw=none}]
  \node (1) at (0.0,0) [label={above,minimum height=13}:$ $,fill=black!50] {};
  \node (2) at (0.7,0) [label={above,minimum height=13}:$ $] {};
  \node (3) at (1.3,0) [label={above,minimum height=13}:$ $,fill=black!50] {};
  \node (4) at (2.0,0) [label={above,minimum height=13}:$ $,fill=black!50] {};
  \node (5) at (2.6,0) [label={above,minimum height=13}:$ $] {};
\end{scope}
\begin{scope}[every edge/.style={draw=black!60,line width=1.2}]
  \path (1) edge node {} (2);
  \path (2) edge node {} (3);
  \path (4) edge node {} (5);
\end{scope}
\begin{scope}[every edge/.style={draw=black, line width=3}]
  \path (3) edge node {} (4);
\end{scope}
\begin{scope}[every node/.style={draw,inner sep=11.5,yshift=-4}, every label/.style={rectangle,draw=none,inner sep=6.0}, every fit/.append style=text badly centered]
\end{scope}
\end{tikzpicture}
 & 1 & (e) $\rightsquigarrow$ (a) \\

\begin{tikzpicture}
\begin{scope}[every node/.style={circle,thick,draw,inner sep=2.5}, every label/.style={rectangle,draw=none}]
  \node (1) at (0.0,0) [label={above,minimum height=13}:$ $,fill=black!50] {};
  \node (2) at (0.7,0) [label={above,minimum height=13}:$ $] {};
  \node (3) at (1.3,0) [label={above,minimum height=13}:$ $,fill=black!50] {};
  \node (4) at (2.0,0) [label={above,minimum height=13}:$ $] {};
  \node (5) at (2.6,0) [label={above,minimum height=13}:$ $,fill=black!50] {};
\end{scope}
\begin{scope}[every edge/.style={draw=black!60,line width=1.2}]
  \path (1) edge node {} (2);
  \path (2) edge node {} (3);
  \path (3) edge node {} (4);
  \path (4) edge node {} (5);
\end{scope}
\begin{scope}[every edge/.style={draw=black, line width=3}]
\end{scope}
\begin{scope}[every node/.style={draw,inner sep=11.5,yshift=-4}, every label/.style={rectangle,draw=none,inner sep=6.0}, every fit/.append style=text badly centered]
\end{scope}
\end{tikzpicture}
 & $\longrightarrow$ & 
\begin{tikzpicture}
\begin{scope}[every node/.style={circle,thick,draw,inner sep=2.5}, every label/.style={rectangle,draw=none}]
  \node (1) at (0.0,0) [label={above,minimum height=13}:$ $,fill=black!50] {};
  \node (2) at (0.7,0) [label={above,minimum height=13}:$ $] {};
  \node (3) at (1.3,0) [label={above,minimum height=13}:$ $] {};
  \node (4) at (2.0,0) [label={above,minimum height=13}:$ $] {};
  \node (5) at (2.6,0) [label={above,minimum height=13}:$ $,fill=black!50] {};
\end{scope}
\begin{scope}[every edge/.style={draw=black!60,line width=1.2}]
  \path (1) edge node {} (2);
  \path (2) edge node {} (3);
  \path (3) edge node {} (4);
  \path (4) edge node {} (5);
\end{scope}
\begin{scope}[every edge/.style={draw=black, line width=3}]
\end{scope}
\begin{scope}[every node/.style={draw,inner sep=11.5,yshift=-4}, every label/.style={rectangle,draw=none,inner sep=6.0}, every fit/.append style=text badly centered]
\end{scope}
\end{tikzpicture}
 & 0 & (e) $\rightsquigarrow$ (c)/(d) \\

\begin{tikzpicture}
\begin{scope}[every node/.style={circle,thick,draw,inner sep=2.5}, every label/.style={rectangle,draw=none}]
  \node (1) at (0.0,0) [label={above,minimum height=13}:$ $,fill=black!50] {};
  \node (2) at (0.7,0) [label={above,minimum height=13}:$ $] {};
  \node (3) at (1.3,0) [label={above,minimum height=13}:$ $,fill=black!50] {};
  \node (4) at (2.0,0) [label={above,minimum height=13}:$ $] {};
  \node (5) at (2.6,0) [label={above,minimum height=13}:$ $] {};
\end{scope}
\begin{scope}[every edge/.style={draw=black!60,line width=1.2}]
  \path (1) edge node {} (2);
  \path (2) edge node {} (3);
  \path (3) edge node {} (4);
  \path (4) edge node {} (5);
\end{scope}
\begin{scope}[every edge/.style={draw=black, line width=3}]
\end{scope}
\begin{scope}[every node/.style={draw,inner sep=11.5,yshift=-4}, every label/.style={rectangle,draw=none,inner sep=6.0}, every fit/.append style=text badly centered]
\end{scope}
\end{tikzpicture}
 & $\longrightarrow$ & 
\begin{tikzpicture}
\begin{scope}[every node/.style={circle,thick,draw,inner sep=2.5}, every label/.style={rectangle,draw=none}]
  \node (1) at (0.0,0) [label={above,minimum height=13}:$ $,fill=black!50] {};
  \node (2) at (0.7,0) [label={above,minimum height=13}:$ $] {};
  \node (3) at (1.3,0) [label={above,minimum height=13}:$ $] {};
  \node (4) at (2.0,0) [label={above,minimum height=13}:$ $] {};
  \node (5) at (2.6,0) [label={above,minimum height=13}:$ $] {};
\end{scope}
\begin{scope}[every edge/.style={draw=black!60,line width=1.2}]
  \path (1) edge node {} (2);
  \path (2) edge node {} (3);
  \path (3) edge node {} (4);
  \path (4) edge node {} (5);
\end{scope}
\begin{scope}[every edge/.style={draw=black, line width=3}]
\end{scope}
\begin{scope}[every node/.style={draw,inner sep=11.5,yshift=-4}, every label/.style={rectangle,draw=none,inner sep=6.0}, every fit/.append style=text badly centered]
\end{scope}
\end{tikzpicture}
 & 0 & (e) $\rightsquigarrow$ (c)/(d) \\

\begin{tikzpicture}
\begin{scope}[every node/.style={circle,thick,draw,inner sep=2.5}, every label/.style={rectangle,draw=none}]
  \node (1) at (0.0,0) [label={above,minimum height=13}:$ $,fill=black!50] {};
  \node (2) at (0.7,0) [label={above,minimum height=13}:$ $] {};
  \node (3) at (1.3,0) [label={above,minimum height=13}:$ $] {};
  \node (4) at (2.0,0) [label={above,minimum height=13}:$ $,fill=black!50] {};
  \node (5) at (2.6,0) [label={above,minimum height=13}:$ $,fill=black!50] {};
\end{scope}
\begin{scope}[every edge/.style={draw=black!60,line width=1.2}]
  \path (1) edge node {} (2);
  \path (2) edge node {} (3);
  \path (3) edge node {} (4);
\end{scope}
\begin{scope}[every edge/.style={draw=black, line width=3}]
  \path (4) edge node {} (5);
\end{scope}
\begin{scope}[every node/.style={draw,inner sep=11.5,yshift=-4}, every label/.style={rectangle,draw=none,inner sep=6.0}, every fit/.append style=text badly centered]
\end{scope}
\end{tikzpicture}
 & & \emph{(critical)} & 1 & (e) $\rightsquigarrow$ (e) $\rightsquigarrow$ (a) \\

\begin{tikzpicture}
\begin{scope}[every node/.style={circle,thick,draw,inner sep=2.5}, every label/.style={rectangle,draw=none}]
  \node (1) at (0.0,0) [label={above,minimum height=13}:$ $,fill=black!50] {};
  \node (2) at (0.7,0) [label={above,minimum height=13}:$ $] {};
  \node (3) at (1.3,0) [label={above,minimum height=13}:$ $] {};
  \node (4) at (2.0,0) [label={above,minimum height=13}:$ $,fill=black!50] {};
  \node (5) at (2.6,0) [label={above,minimum height=13}:$ $] {};
\end{scope}
\begin{scope}[every edge/.style={draw=black!60,line width=1.2}]
  \path (1) edge node {} (2);
  \path (2) edge node {} (3);
  \path (3) edge node {} (4);
  \path (4) edge node {} (5);
\end{scope}
\begin{scope}[every edge/.style={draw=black, line width=3}]
\end{scope}
\begin{scope}[every node/.style={draw,inner sep=11.5,yshift=-4}, every label/.style={rectangle,draw=none,inner sep=6.0}, every fit/.append style=text badly centered]
\end{scope}
\end{tikzpicture}
 & & \emph{(critical)} & 0 & (e) $\rightsquigarrow$ (e) $\rightsquigarrow$ (e) $\rightsquigarrow$ (b)

\end{tabular}

}

\vskip0.3cm

     \caption{Matching in the case $A_n$ with $n=5$, $d=3$, $f=1$.
    The last columns indicates the case where simplices occur.
    When case (e) is reached, the arrow ``$\rightsquigarrow$'' indicates how the recursion continues (after $K_{5,1}$, the recursion involves $K_{3,0}$, $K_{2,1}$ and $K_{0,0}$).}
    \label{table:An-matching-example}
  \end{table}
\end{example}

\begin{remark}
  A peculiarity of this matching is that, if $\sigma' \to \tau'$ is in the matching, then $\sigma' = \tau' \cup \{v\}$ with $v\equiv f+1$ or $v\equiv 0 \pmod d$. This can be easily checked by induction.
  \label{rmk:removed-vertex}
\end{remark}

\begin{lemma}
  The matching described above is an acyclic weighted matching on $K_{n,f}$.
  \label{lemma:An-acyclic-weighted}
\end{lemma}

\begin{proof}
  \textbf{Part 1: the matching is acyclic.}
  The proof is by induction on $n$, the case $n=0$ being trivial.
  For $n\leq f$, in $K_{n,f}$ there are either $1$ or $0$ simplices.
  Assume from now on $n>f$.
  Let $P = \{p_\text{a}, p_\text{e}, p_\text{c,d}\}$ be a three-element totally ordered poset, with the order given by $p_\text{a} > p_\text{e} > p_\text{c,d}$.
  Consider the map $\eta\colon K_{n,f} \to P$ which sends $\sigma\in K_{n,f}$ to the $p_\text{x}$ such that $\sigma$ occurs in case (x) (here cases (c) and (d) are united).
  For instance, $\eta(\sigma) = p_\text{a}$ if and only if $\{1,\dots, d-1\} \subseteq \sigma$.
  Notice that the map $\eta$ is compatible with the matching, i.e.\ two matched simplices lie in the same fiber of $\eta$.
  
  Let us prove that $\eta$ is a poset map. Given two simplices $\sigma \geq \tau$, we want to prove that $\eta(\sigma) \geq \eta(\tau)$.
  If $\eta(\tau) = p_\text{a}$ then $\{1,\dots,d-1\} \subseteq \tau \subseteq \sigma$, so $\eta(\sigma) = p_\text{a}$ also.
If $\eta(\tau) = p_\text{e}$ then $\{1,\dots,f,f+2,\dots,d-1\}\subseteq \tau\subseteq \sigma$, thus $\eta(\sigma)\in \{ p_\text{a}, p_\text{e} \}$.
  Finally, if $\eta(\tau) = p_\text{c,d}$ there is nothing to prove.
  
  Since $\eta$ is a poset map, our matching is acyclic if and only if it is acyclic on each fiber of $\eta$.
  The restriction of the matching to $\eta^{-1}(p_\text{a})$ consists of edges $\sigma \to \tau$ with $\sigma = \tau \cup \{d\}$, so there is no alternating path of length $\geq 4$ and the matching is acyclic.
  The restriction to $\eta^{-1}(p_\text{c,d})$ is acyclic for the same reason (it is always the vertex $f+1$ which is added or removed).
  The restriction to $\eta^{-1}(p_\text{e})$ is acyclic by induction.

  \textbf{Part 2: the matching is weighted.}
  The proof is by induction on $n$, the case $n=0$ being trivial.
  In case (a), $v_\varphi(\sigma) = v_\varphi(\sigma \xor d)$ by what we have already said in Section \ref{sec:braid-groups} (see the proof of Theorem \ref{thm:casoAn}).
  In cases (c) and (d), both the simplices $\sigma$ and $\sigma \xor (f+1)$ do not contain $\{1,\dots,d-1\}$. Thus the (possibly empty) connected component of $1$ has size $\leq d-2$ and does not contribute to the weight.
  Therefore $v_\varphi(\sigma) = v_\varphi(\sigma\xor(f+1))$.
  Finally, suppose that $\sigma$ and $\tau$ are simplices that occur in case (e). Let $\hat\sigma,\hat\tau\in K_{n-f-1,\;d-2-f}$ be the simplices obtained ignoring the vertices $1$, \dots, $f+1$, as described above.
  Since $f \leq d-2$ we have that
  \[ v_\varphi(\sigma) = v_\varphi(\tau) \quad \text{if and only if} \quad v_\varphi(\hat\sigma) = v_\varphi(\hat\tau), \]
  so we are done by induction on $n$.
\end{proof}

The critical simplices of the matching on $K_{n,f}$ are quite simple to describe.
If $f \leq d-2$, $n>f$ and $n\equiv f$ or $-1\pmod d$, there are 2 critical simplices, one of weight $1$ and one of weight $0$. If $n=f$ there is 1 critical simplex (in fact there is only one simplex in $K_{n,f}$).
In all the other cases the matching has no critical simplices.
We are going to prove this in the following theorem.
See Table \ref{table:An-critical}, and Figures \ref{fig:An-critical-f} and \ref{fig:An-critical-1}, for an explicit description of the critical simplices.
Notice in particular that, when there are 2 critical simplices, one is a face of the other in $K_{n,f}$.

\begin{table}[htbp]
  {\renewcommand{\arraystretch}{1.4}\setlength\tabcolsep{4pt}\begin{tabular}{c|c|c|c|c}
    \multicolumn{2}{c|}{Case} & Simplices & $|\sigma|$ & $v_\varphi(\sigma)$ \\
    \hline \hline
    & \multirow{2}{*}{$n = kd+f$ {\footnotesize (Fig.\ \ref{fig:An-critical-f})}} & $(1^f01^{d-2-f}0)^{k-1}1^f01^{d-1}$ & $n-2k+1$ & 1 \\
    $n>f$ & & $(1^f01^{d-2-f}0)^k1^f$ & $n-2k$ & 0 \\
    \cline{2-5}
    $f \leq d-2$ & \multirow{2}{*}{$n = kd-1$ {\footnotesize (Fig.\ \ref{fig:An-critical-1})}} & $(1^f01^{d-2-f}0)^{k-1}1^{d-1}$ & $n-2k+2$ & 1 \\
    & & $(1^f01^{d-2-f}0)^{k-1}1^f01^{d-2-f}$ & $n-2k+1$ & 0 \\
    \hline
    \multicolumn{2}{c|}{$n=f$} & $1^f$ & $n$ & 1 or 0 \\
    \hline
  \end{tabular}
  }
  \vskip0.3cm
  \caption{Description of the critical simplices for $A_n$.}
  \label{table:An-critical}
\end{table}

\begin{figure}[htbp]
  \begin{tikzpicture}
\begin{scope}[every node/.style={circle,thick,draw,inner sep=2.5}, every label/.style={rectangle,draw=none}]
  \node (0) at (0.0,0) [label={above,minimum height=13}:$ $,fill=black!50] {};
  \node (1) at (0.9,0) [label={above,minimum height=13}:$ $,fill=black!50] {};
  \node (2) at (1.8,0) [label={above,minimum height=13}:$ $] {};
  \node (3) at (2.7,0) [label={above,minimum height=13}:$ $,fill=black!50] {};
  \node (4) at (3.6,0) [label={above,minimum height=13}:$ $,fill=black!50] [label={below}:$d-2-f$\;vertices] {};
  \node (5) at (4.5,0) [label={above,minimum height=13}:$ $,fill=black!50] {};
  \node (6) at (5.4,0) [label={above,minimum height=13}:$ $] {};
  \node (7) at (6.3,0) [label={above,minimum height=13}:$ $,fill=black!50] {};
  \node (8) at (7.2,0) [label={above,minimum height=13}:$ $,fill=black!50] {};
  \node (9) at (8.1,0) [label={above,minimum height=13}:$ $] {};
  \node (10) at (9.0,0) [label={above,minimum height=13}:$ $,fill=black!50] {};
  \node (11) at (9.9,0) [label={above,minimum height=13}:$ $,fill=black!50] {};
  \node (12) at (10.8,0) [label={above,minimum height=13}:$ $,fill=black!50] {};
  \node (13) at (11.7,0) [label={above,minimum height=13}:$ $,fill=black!50] {};
\end{scope}
\begin{scope}[every edge/.style={draw=black!60,line width=1.2}]
  \path (1) edge node {} (2);
  \path (2) edge node {} (3);
  \path (5) edge node {} (6);
  \path (6) edge node [fill=white, rectangle, inner sep=1.0] {$\ldots$} (7);
  \path (8) edge node {} (9);
  \path (9) edge node {} (10);
\end{scope}
\begin{scope}[every edge/.style={draw=black, line width=3}]
  \path (0) edge node [fill=white, rectangle, inner sep=1.0, minimum height = 0.5cm] {$\ldots$} node [below=2] {$f$\;vertices} (1);
  \path (3) edge node {} (4);
  \path (4) edge node [fill=white, rectangle, inner sep=1.0, minimum height = 0.5cm] {$\ldots$} (5);
  \path (7) edge node [fill=white, rectangle, inner sep=1.0, minimum height = 0.5cm] {$\ldots$} node [below=2] {$f$\;vertices} (8);
  \path (10) edge node {} (11);
  \path (11) edge node {} node [below=2] {$d-1$\;vertices} (12);
  \path (12) edge node [fill=white, rectangle, inner sep=1.0, minimum height = 0.5cm] {$\ldots$} (13);
\end{scope}
\begin{scope}[every node/.style={draw,inner sep=11.5,yshift=-4}, every label/.style={rectangle,draw=none,inner sep=6.0}, every fit/.append style=text badly centered]
  \node [fit=(0) (6), label={above}:$k-1$\;times] {};
\end{scope}
\end{tikzpicture}
   \begin{tikzpicture}
\begin{scope}[every node/.style={circle,thick,draw,inner sep=2.5}, every label/.style={rectangle,draw=none}]
  \node (0) at (0.0,0) [label={above,minimum height=13}:$ $,fill=black!50] {};
  \node (1) at (0.9,0) [label={above,minimum height=13}:$ $,fill=black!50] {};
  \node (2) at (1.8,0) [label={above,minimum height=13}:$ $] {};
  \node (3) at (2.7,0) [label={above,minimum height=13}:$ $,fill=black!50] {};
  \node (4) at (3.6,0) [label={above,minimum height=13}:$ $,fill=black!50] [label={below}:$d-2-f$\;vertices] {};
  \node (5) at (4.5,0) [label={above,minimum height=13}:$ $,fill=black!50] {};
  \node (6) at (5.4,0) [label={above,minimum height=13}:$ $] {};
  \node (7) at (6.3,0) [label={above,minimum height=13}:$ $,fill=black!50] {};
  \node (8) at (7.2,0) [label={above,minimum height=13}:$ $,fill=black!50] {};
\end{scope}
\begin{scope}[every edge/.style={draw=black!60,line width=1.2}]
  \path (1) edge node {} (2);
  \path (2) edge node {} (3);
  \path (5) edge node {} (6);
  \path (6) edge node [fill=white, rectangle, inner sep=1.0] {$\ldots$} (7);
\end{scope}
\begin{scope}[every edge/.style={draw=black, line width=3}]
  \path (0) edge node [fill=white, rectangle, inner sep=1.0, minimum height = 0.5cm] {$\ldots$} node [below=2] {$f$\;vertices} (1);
  \path (3) edge node {} (4);
  \path (4) edge node [fill=white, rectangle, inner sep=1.0, minimum height = 0.5cm] {$\ldots$} (5);
  \path (7) edge node [fill=white, rectangle, inner sep=1.0, minimum height = 0.5cm] {$\ldots$} node [below=2] {$f$\;vertices} (8);
\end{scope}
\begin{scope}[every node/.style={draw,inner sep=11.5,yshift=-4}, every label/.style={rectangle,draw=none,inner sep=6.0}, every fit/.append style=text badly centered]
  \node [fit=(0) (6), label={above}:$k$\;times] {};
\end{scope}
\end{tikzpicture}
   \caption{Critical simplices for $f\leq d-2$ and $n=kd+f$ ($k\geq 1$).}
  \label{fig:An-critical-f}
\end{figure}

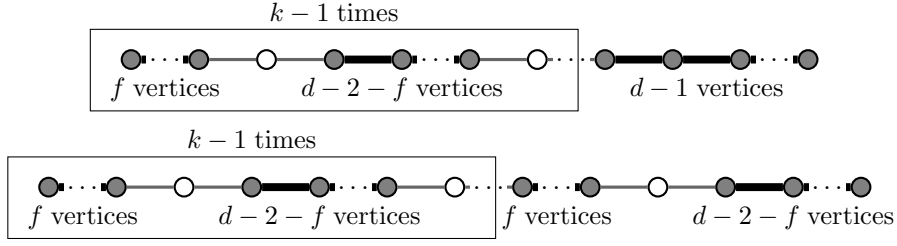
\begin{figure}[htbp]
  \begin{tikzpicture}
\begin{scope}[every node/.style={circle,thick,draw,inner sep=2.5}, every label/.style={rectangle,draw=none}]
  \node (0) at (0.0,0) [label={above,minimum height=13}:$ $,fill=black!50] {};
  \node (1) at (0.9,0) [label={above,minimum height=13}:$ $,fill=black!50] {};
  \node (2) at (1.8,0) [label={above,minimum height=13}:$ $] {};
  \node (3) at (2.7,0) [label={above,minimum height=13}:$ $,fill=black!50] {};
  \node (4) at (3.6,0) [label={above,minimum height=13}:$ $,fill=black!50] [label={below}:$d-2-f$\;vertices] {};
  \node (5) at (4.5,0) [label={above,minimum height=13}:$ $,fill=black!50] {};
  \node (6) at (5.4,0) [label={above,minimum height=13}:$ $] {};
  \node (7) at (6.3,0) [label={above,minimum height=13}:$ $,fill=black!50] {};
  \node (8) at (7.2,0) [label={above,minimum height=13}:$ $,fill=black!50] {};
  \node (9) at (8.1,0) [label={above,minimum height=13}:$ $,fill=black!50] {};
  \node (10) at (9.0,0) [label={above,minimum height=13}:$ $,fill=black!50] {};
\end{scope}
\begin{scope}[every edge/.style={draw=black!60,line width=1.2}]
  \path (1) edge node {} (2);
  \path (2) edge node {} (3);
  \path (5) edge node {} (6);
  \path (6) edge node [fill=white, rectangle, inner sep=1.0] {$\ldots$} (7);
\end{scope}
\begin{scope}[every edge/.style={draw=black, line width=3}]
  \path (0) edge node [fill=white, rectangle, inner sep=1.0, minimum height = 0.5cm] {$\ldots$} node [below=2] {$f$\;vertices} (1);
  \path (3) edge node {} (4);
  \path (4) edge node [fill=white, rectangle, inner sep=1.0, minimum height = 0.5cm] {$\ldots$} (5);
  \path (7) edge node {} (8);
  \path (8) edge node {} node [below=2] {$d-1$\;vertices} (9);
  \path (9) edge node [fill=white, rectangle, inner sep=1.0, minimum height = 0.5cm] {$\ldots$} (10);
\end{scope}
\begin{scope}[every node/.style={draw,inner sep=11.5,yshift=-4}, every label/.style={rectangle,draw=none,inner sep=6.0}, every fit/.append style=text badly centered]
  \node [fit=(0) (6), label={above}:$k-1$\;times] {};
\end{scope}
\end{tikzpicture}
   \begin{tikzpicture}
\begin{scope}[every node/.style={circle,thick,draw,inner sep=2.5}, every label/.style={rectangle,draw=none}]
  \node (0) at (0.0,0) [label={above,minimum height=13}:$ $,fill=black!50] {};
  \node (1) at (0.9,0) [label={above,minimum height=13}:$ $,fill=black!50] {};
  \node (2) at (1.8,0) [label={above,minimum height=13}:$ $] {};
  \node (3) at (2.7,0) [label={above,minimum height=13}:$ $,fill=black!50] {};
  \node (4) at (3.6,0) [label={above,minimum height=13}:$ $,fill=black!50] [label={below}:$d-2-f$\;vertices] {};
  \node (5) at (4.5,0) [label={above,minimum height=13}:$ $,fill=black!50] {};
  \node (6) at (5.4,0) [label={above,minimum height=13}:$ $] {};
  \node (7) at (6.3,0) [label={above,minimum height=13}:$ $,fill=black!50] {};
  \node (8) at (7.2,0) [label={above,minimum height=13}:$ $,fill=black!50] {};
  \node (9) at (8.1,0) [label={above,minimum height=13}:$ $] {};
  \node (10) at (9.0,0) [label={above,minimum height=13}:$ $,fill=black!50] {};
  \node (11) at (9.9,0) [label={above,minimum height=13}:$ $,fill=black!50] [label={below}:$d-2-f$\;vertices] {};
  \node (12) at (10.8,0) [label={above,minimum height=13}:$ $,fill=black!50] {};
\end{scope}
\begin{scope}[every edge/.style={draw=black!60,line width=1.2}]
  \path (1) edge node {} (2);
  \path (2) edge node {} (3);
  \path (5) edge node {} (6);
  \path (6) edge node [fill=white, rectangle, inner sep=1.0] {$\ldots$} (7);
  \path (8) edge node {} (9);
  \path (9) edge node {} (10);
\end{scope}
\begin{scope}[every edge/.style={draw=black, line width=3}]
  \path (0) edge node [fill=white, rectangle, inner sep=1.0, minimum height = 0.5cm] {$\ldots$} node [below=2] {$f$\;vertices} (1);
  \path (3) edge node {} (4);
  \path (4) edge node [fill=white, rectangle, inner sep=1.0, minimum height = 0.5cm] {$\ldots$} (5);
  \path (7) edge node [fill=white, rectangle, inner sep=1.0, minimum height = 0.5cm] {$\ldots$} node [below=2] {$f$\;vertices} (8);
  \path (10) edge node {} (11);
  \path (11) edge node [fill=white, rectangle, inner sep=1.0, minimum height = 0.5cm] {$\ldots$} (12);
\end{scope}
\begin{scope}[every node/.style={draw,inner sep=11.5,yshift=-4}, every label/.style={rectangle,draw=none,inner sep=6.0}, every fit/.append style=text badly centered]
  \node [fit=(0) (6), label={above}:$k-1$\;times] {};
\end{scope}
\end{tikzpicture}
   \caption{Critical simplices for $f\leq d-2$ and $n=kd-1$ ($k\geq 1$).}
  \label{fig:An-critical-1}
\end{figure}

\begin{theorem}[Critical simplices in case $A_n$]
  The critical simplices for the matching on $K_{n,f}$ are those listed in Table \ref{table:An-critical}.
  In particular the matching is always precise, and has no critical simplices if $n\not\equiv f,-1\pmod d$ or $n > f=d-1$.
  In addition, when there are two critical simplices there is only one alternating path between them (the trivial one).
  \label{thm:matching-An}
\end{theorem}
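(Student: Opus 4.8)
The plan is to establish all four assertions by induction on $n$, following the recursive definition of the matching (recursion in $n$ and $f$, where step (e) identifies the simplices of $K_{n,f}$ not covered by steps (a)--(d) with those of $K_{n-f-1,\,d-2-f}$). For the description of the critical simplices I would first note that steps (c) and (d) always produce a match, hence contribute no critical simplices; step (a) produces the single critical simplex $1^{d-1}$ precisely when the current instance has $n=d-1$, with $v_\varphi(1^{d-1})=\lfloor d/d\rfloor=1$; step (b) produces the critical simplex $1^f$ precisely when $n=f$; and step (e) produces exactly the critical simplices of $K_{n-f-1,\,d-2-f}$, each transported back to $K_{n,f}$ by prepending the string $1^f0$. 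Prepending $1^f0$ adds a connected component of size $f\le d-2$, so it leaves $v_\varphi$ unchanged while increasing $|\sigma|$ by $f$.

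Next I would unwind the recursion. Since $f\le d-2$, the parameter toggles $f\mapsto d-2-f\mapsto f$ under two applications of step (e), so iterating step (e) peels off copies of the period $1^f0\,1^{d-2-f}0$ of length $d$; the recursion stops the first time the current instance satisfies $n'=f'$ (step (b), weight $0$) or $n'=d-1$ (step (a), weight $1$). Recording how many periods are peeled and which stopping condition occurs yields exactly the simplices listed in Table~\ref{table:An-critical}, and a routine count of the number of ones and of the components of size $d-1$ confirms the stated values of $|\sigma|$ and $v_\varphi(\sigma)$. In particular, when $f\le d-2$ and $n\not\equiv f,-1\pmod{d}$, the residues modulo $d$ of the numbers $n-id$ and $n-id-f-1$ occurring along the recursion are never equal to $d-1$, to $f$, or to $d-2-f$, so neither step (a) nor step (b) ever fires and the matching is perfect; and when $f=d-1$, step (a) covers every simplex, so the matching is perfect unless $n=d-1$. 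This settles when the matching is perfect.

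Preciseness is then immediate. In the perfect cases $G^{\M}$ has no vertices; when $n=f$ there is a single critical simplex, so $G^{\M}$ has no edges; and in all remaining cases there are exactly two critical simplices $\sigma_1$ (with $v_\varphi=1$) and $\sigma_0$ (with $v_\varphi=0$), lying in consecutive dimensions $|\sigma_1|=|\sigma_0|+1$. Hence the only possible edge of $G^{\M}$ runs $\sigma_1\to\sigma_0$ and satisfies $v_\varphi(\sigma_1)=1=v_\varphi(\sigma_0)+1$, so $\M$ is precise.

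The main work is the last assertion. A direct check based on Remark~\ref{rmk:removed-vertex} (the removed vertex is always $\equiv 0$ or $\equiv f+1\pmod{d}$) shows that $\sigma_0\lhd\sigma_1$, so the trivial path $\sigma_1\rhd\sigma_0$ exists; what must be proved is uniqueness. I would induct on $n$ with base case $n=d-1$, where $\sigma_1=1^{d-1}$, $\sigma_0=1^{d-1}\setminus\{f+1\}$, and every matched pair of $K_{d-1,f}$ has the form $\{\rho,\,\rho\cup\{f+1\}\}$: an alternating path out of $\sigma_1$ must first descend to a facet $\sigma_1\setminus\{v\}$, and if $v\ne f+1$ this facet still contains $f+1$, hence is a step-(c) simplex, hence the \emph{larger} element of its matched pair, so the path cannot be continued upwards (nor can it stop there, since $\sigma_1\setminus\{v\}\ne\sigma_0$); therefore $v=f+1$ and the path is the trivial one. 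For $n>d-1$ I would use the poset map $\eta\colon K_{n,f}\to\{p_{\text a}>p_{\text e}>p_{\text{c,d}}\}$ from the proof of Lemma~\ref{lemma:An-acyclic}: $\eta$ is non-increasing along any alternating path, and in every two-critical-simplex case with $n>d-1$ both $\sigma_1$ and $\sigma_0$ lie in $\eta^{-1}(p_{\text e})$, so $\eta$ is in fact \emph{constant} along the path; but $\eta^{-1}(p_{\text e})$ is exactly the set of step-(e) simplices, which is isomorphic as a poset-with-matching to $K_{n-f-1,\,d-2-f}$ via deletion of the prefix $1^f0$, carrying $\sigma_1$ and $\sigma_0$ to its two critical simplices and the trivial path to the trivial path. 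Since $n-f-1<n$ and the recursion bottoms out at $n=d-1$ (with the cases $n\equiv f$ and $n\equiv-1\pmod{d}$ feeding into one another under step (e)), the induction closes. The steps I expect to demand the most care are verifying that both critical simplices always lie in $\eta^{-1}(p_{\text e})$ when $n>d-1$, so that one may use the constancy of $\eta$ rather than just its monotonicity, and checking that the two interlocking recursion chains are well founded.
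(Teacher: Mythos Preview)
Your argument is correct and, for the description of the critical simplices and the preciseness, follows essentially the same inductive scheme as the paper (case analysis on steps (a)--(e), unwinding the recursion by noting the toggle $f\mapsto d-2-f$).

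The one substantive difference is in how you prove uniqueness of the alternating path. The paper gives a short global argument based on Remark~\ref{rmk:removed-vertex}: in the case $n=kd+f$ one observes that $\sigma_1=\sigma_0\cup\{n-f\}$ and that $n-f$ is the \emph{only} vertex of $\sigma_1$ congruent to $f+1$ or $0$ modulo $d$; hence the first downward step of any nontrivial path removes some $v_1$ not in these residue classes, and by the Remark no matching step can ever reinsert $v_1$, contradicting $v_1\in\sigma_0$. Your route instead inducts on $n$ via the poset map $\eta$ of Lemma~\ref{lemma:An-acyclic}: once you check (as you do) that for $n>d-1$ both critical simplices lie in the fibre $\eta^{-1}(p_{\text e})$, monotonicity of $\eta$ along alternating paths forces the whole path into that fibre, which is isomorphic to $K_{n-f-1,\,d-2-f}$, and the inductive hypothesis finishes the job. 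Your approach is a bit longer and requires the base case $n=d-1$ (which you handle correctly, noting that the only admissible downward moves from $1^{d-1}$ inside $K_{d-1,f}$ remove a vertex $v\in\{f+1,\dots,d-1\}$, and that for $v\neq f+1$ the resulting simplex is the top of its matched pair). The paper's argument buys brevity and avoids the separate base case; yours has the mild advantage of not needing to identify the special vertex $n-f$ and of making the recursive structure of the matching do all the work.
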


\begin{proof}
  \textbf{Part 1: critical simplices.}
  The proof is by induction on $n$, the case $n=0$ being trivial (there is one critical simplex for $f=0$, namely the empty simplex $\varnothing$, and no critical simplices for $f>0$).
  Let $\sigma\in K_{n,f}$ be a simplex. Let us consider each of the five cases that can occur in the construction of the matching.
  \begin{enumerate}[(a)]
    \item We have $\{1,\dots,d-1\}\subseteq \sigma$. Then $\sigma$ is critical if and only if $n=d-1$.
    If $n=d-1$ then $\sigma$ is indeed listed in Table \ref{table:An-critical}, as the first of the two critical simplices in the case $n=kd-1$ (here $k=1$).
    Conversely, the only simplex of Table \ref{table:An-critical} which contains $\{1,\dots,d-1\}$ is the first one of the case $n=kd-1$ when $k=1$.
    
    \item We have $n=f$ and $\sigma=\{1,\dots,f\}$.
    Then $\sigma$ is critical, and it is indeed listed as the second of the two critical simplices in the case $n=kd+f$ (here $k=0$).
    
    \item We have $\{1,\dots,f+1\} \subseteq \sigma$.
    Then $\sigma$ is not critical.
    The only listed simplex which contains $\{1,\dots,f+1\}$ is the first one of the case $n=kd-1$ for $k=1$, but it is equal to $\{1,\dots,d-1\}$ and so it must be different from $\sigma$.
    
    \item We have $f+1\not \in \sigma$ and $\{f+2,\dots,d-1\}\nsubseteq \sigma$.
    Then $\sigma$ is not critical.
    It is easy to check that all the listed simplices $\tau$ with $n>f$ and $f+1\not \in \tau$ satisfy $\{f+2,\dots,d-1\}\subseteq \tau$, so none of them can be equal to $\sigma$.
    
    \item We have $\{1,\dots,f,f+2,\dots,d-1\}\subseteq \sigma$ and $f+1\not\in\sigma$.
    Then $\sigma$ is critical if and only if the simplex $\hat\sigma \in K_{n-f-1,\;d-2-f}$ is critical, where $\hat\sigma$ is constructed from $\sigma$ by ignoring the first $f+1$ vertices.
    By induction, $\hat\sigma$ is critical if and only if it is listed in Table \ref{table:An-critical}.
    By taking the simplices of Table \ref{table:An-critical} for $K_{n-f-1,\;d-2-f}$ and adding $1^f0$ at the beginning, one obtains exactly the simplices of Table \ref{table:An-critical} for $K_{n,f}$ (but the two cases are exchanged).
    This concludes the induction argument.
  \end{enumerate}
  
  For fixed $n$ and $f$, the quantity $|\sigma|-v_\varphi(\sigma)$ is constant among the critical simplices $\sigma$.
  More precisely, it is equal to $n-2k$ for $n=kd+f$ (except for $f=d-1$) and to $n-2k+1$ for $n=kd-1$.
  Therefore the matching is precise.
  
  \textbf{Part 2: alternating paths.}
  We want to prove that the only alternating path between critical simplices is the trivial one.
  Consider the case $n=kd+f$ for $k\geq 1$ (the case $n=kd-1$ is analogous).
  Let $\sigma$ and $\tau$ be the two critical simplices (the ones of Figure \ref{fig:An-critical-f}). Notice that $\sigma = \tau \cup \{ n-f \}$.
  To establish a contradiction suppose we have a non-trivial alternating path
  \[ \sigma \vartriangleright \tau_1 \vartriangleleft \sigma_1 \vartriangleright \tau_2 \vartriangleleft \sigma_2 \vartriangleright \dots \vartriangleright \tau_m \vartriangleleft \sigma_m \vartriangleright \tau, \quad m\geq 1. \]
  Since $\tau_1\neq \tau$, we must have $\sigma = \tau_1 \cup \{v_1\}$ with $v_1\neq n-f$. Notice that $n-f$ is the only vertex $v\in \sigma$ which satisfies $v\equiv f+1$ or $v\equiv 0 \pmod d$. Therefore, by Remark \ref{rmk:removed-vertex}, the vertex $v_1$ will never be recovered throughout the alternating path. This is a contradiction since $v_1\in \tau$.
\end{proof}

As a consequence we can compute the homology $H_*(\GW; R)$ for $\GW$ of type $A_n$. This gives a proof of Theorem \ref{teo:simpleind}.

\begin{proof}[Proof of Theorem \ref{teo:simpleind} (Homology in case $A_n$)]
  We simply need to apply the formula given by Theorem \ref{thm:homology-artin-groups} using our precise matching on $K = K_{n,0}$.
  Since $f=0$, there are critical simplices for $n=kd$ or $n=kd-1$.
  The boundary $\delta_{m+1}^\varphi$ is non-trivial only for
  \[
    \begin{cases}
      m = n-2k = k(d-2) & \text{if } n=kd, \\
      m = n-2k+1 = k(d-2) & \text{if } n=kd-1.
    \end{cases}
  \]
  In both cases for $m=k(d-2)$ we have $\rk\delta_{m+1}^\varphi = 1$, and all the other boundaries are trivial.
  Theorem \ref{teo:simpleind} follows.
\end{proof}

\begin{remark}
  A relation with independence complexes can be found for $K_{n,f}$ also when $1 \leq f \leq d-2$.
  Indeed, choosing a suitable weighted matching (similar to the one of Theorem \ref{thm:casoAn}), the set of critical simplices of positive weight in $K_{n,f}$ is
  \[ \{ \sigma \sqcup A_{d-1} \mid \sigma \in \Ind_{d-2}(A_{n-d}) \cap K_{n-d,\,f} \}. \]
\end{remark}

\subsection{Case $B_n$}
\label{sec:precise-matching-Bn}
Consider now a Coxeter graph of type $B_n$, as in Figure \ref{fig:Bn}.
In this case $K$ is again the full simplex on vertices $\{1,2,\dots, n\}$.
The Poincaré polynomial of a Coxeter group of type $B_k$ is given by
\[ \W_{B_k}(q)= [2k]_q!! = \prod_{i=1}^n \, [2i]_q = \prod_\varphi \varphi^{\omega_\varphi(B_k)}, \]
where, for a given cyclotomic polynomial $\varphi = \varphi_d$, we have
\[ \omega_\varphi(B_k) =
  \begin{cases}
    \;\,\left\lfloor \frac{k}{d} \right\rfloor & \text{if $d$ is odd}, \\
    \left\lfloor \frac{k}{d/2} \right\rfloor & \text{if $d$ is even}
  \end{cases}
\]
(see for example \cite{bjorner2006combinatorics}).
Then we can compute the $\varphi$-weight of any simplex $\sigma\in K$ by looking at the subgraph $\Gamma(\sigma)$ induced by $\sigma$. Let $\Gamma_1(\sigma)$, \dots, $\Gamma_m(\sigma)$ be the connected components of $\Gamma(\sigma)$, with cardinality $n_1$, \dots, $n_m$ respectively, where $\Gamma_1(\sigma)$ is the (possibly empty) component that contains the vertex $1\in S$.
Then
\[ v_\varphi(\sigma) = \omega_\varphi(B_{n_1}) + \sum_{i=2}^m \omega_\varphi(A_{n_i}). \]

\begin{figure}[htbp]
  \begin{tikzpicture}
\begin{scope}[every node/.style={circle,thick,draw,inner sep=2.5}, every label/.style={rectangle,draw=none}]
  \node (1) at (0.0,0) [label={above,minimum height=13}:$1$] {};
  \node (2) at (1.4,0) [label={above,minimum height=13}:$2$] {};
  \node (3) at (2.8,0) [label={above,minimum height=13}:$3$] {};
  \node (4) at (4.2,0) [label={above,minimum height=13}:$4$] {};
  \node (n-1) at (5.6,0) [label={above,minimum height=13}:$n-1$] {};
  \node (n) at (7.0,0) [label={above,minimum height=13}:$n$] {};
\end{scope}
\begin{scope}[every edge/.style={draw=black!60,line width=1.2}]
  \path (1) edge node {} node[inner sep=3, above] {\bf\small 4} (2);
  \path (2) edge node {} (3);
  \path (3) edge node {} (4);
  \path (4) edge node [fill=white, rectangle, inner sep=3.0] {$\ldots$} (n-1);
  \path (n-1) edge node {} (n);
\end{scope}
\begin{scope}[every edge/.style={draw=black, line width=3}]
\end{scope}
\begin{scope}[every node/.style={draw,inner sep=11.5,yshift=-4}, every label/.style={rectangle,draw=none,inner sep=6.0}, every fit/.append style=text badly centered]
\end{scope}
\end{tikzpicture}
   \caption{A Coxeter graph of type $B_n$.} 
  \label{fig:Bn}
\end{figure}

The situation is quite different depending on the parity of $d$.
If $d$ is odd, the $\varphi$-weight of a $B_{k+1}$ component is equal to the $\varphi$-weight of an $A_k$ component:
\[ \omega_\varphi(B_{k+1}) = \left\lfloor \frac{k+1}{d} \right\rfloor = \omega_\varphi(A_k) \quad \text{($d$ odd)}. \]
For this reason it is possible to construct a very simple matching on $K$: match any simplex $\sigma\in K$ with $\sigma \xor 1$.

\begin{lemma}
  The matching constructed above for $d$ odd is an acyclic weighted matching on $K$, with no critical simplices.
\end{lemma}

\begin{proof}
  Clearly the matching is acyclic and there are no critical simplices.
  Let us proof that the matching is weighted.
  Let $\sigma\in K$.
  The only difference between $\Gamma(\sigma)$ and $\Gamma(\sigma\xor 1)$ is the leftmost connected component, which in one case is of type $B_{k+1}$ and in the other case is of type $A_k$.
  Therefore $v_\varphi(\sigma) = v_\varphi(\sigma \xor 1)$.
\end{proof}

Suppose from now on that $d$ is even.
The simplicial complex $K$ is partitioned as
\[ K = \bigsqcup_{q\geq 0} K_q, \quad \text{where} \quad K_q = \left\{ \sigma \in K \;\middle|\; \left\lfloor \frac{|\Gamma_1(\sigma)|}{d/2} \right\rfloor = q \right\}. \]
Here $\Gamma_1(\sigma)$ is the (possibly empty) connected component of $\Gamma(\sigma)$ which contains the vertex $1$.
Notice that each $K_q$ is a subposet of $K$, but not a subcomplex in general.
For a given simplex $\sigma\in K_q$, let $|\Gamma_1(\sigma)| = q\frac d2 + r$ with $0\leq r < \frac d2$.
The matching on $K_q$ is as follows.
\begin{enumerate}[(a)]
  \item If $r\geq 1$ (i.e.\ $q\frac d2 + 1 \in \sigma$) then match $\sigma$ with $\sigma \setminus \left\{q\frac d2+1\right\}$.
  \item If $r = 0$ (i.e.\ $q\frac d2 + 1 \not\in \sigma$) and $\left\{ q\frac d2+2, \dots, (q+1)\frac d2 \right\} \nsubseteq \sigma$, then match $\sigma$ with $\sigma \cup \left\{q\frac d2+1\right\}$ (unless $n=q\frac d2$, in which case $\sigma$ is critical).
  \item We are left with the simplices $\sigma$ for which neither (a) nor (b) apply, i.e.\ with $r=0$ and $\smash{\left\{ q\frac d2+2, \dots, (q+1)\frac d2 \right\} \subseteq \sigma}$. Ignore the first $q\frac d2 + 1$ vertices and relabel the remaining ones from $1$ to $\smash{n-q\frac d2 -1}$, so that we are left exactly with the simplices of $\smash{K^A_{n-q\frac d2 - 1, \; \frac d2 - 1}}$.
  Then construct the matching on $\smash{K^A_{n-q\frac d2 - 1, \; \frac d2 - 1}}$ as in Section \ref{sec:precise-matching-An}.
\end{enumerate}
Putting together the matchings on each $K_q$ we obtain a matching on the full simplicial complex $K$.

\begin{example}
  For $n=4$ and $d=4$, the simplicial complex $K$ contains $2^4 = 16$ simplices of which $12$ are matched and $4$ are critical.
  For instance, consider $\sigma = \{1,2\} \in K$. Then $q=1$ and $r=0$. Since $4\not\in\sigma$, case (b) occurs. Therefore $\sigma$ is matched with $\sigma \cup \{3\} = \{1,2,3\}$.
  See Table \ref{table:Bn-matching-example} for an explicit description of the matching in this case.
  \begin{table}[htbp]

{\renewcommand{\arraystretch}{1.4}

\begin{tabular}{rcl@{\hspace{15pt}}|c|c|c}

\multicolumn{3}{c|}{Simplices} & $v_\varphi(\sigma)$ & \quad$q$\;\;\;\; & Step \\

\hline

\begin{tikzpicture}
\begin{scope}[every node/.style={circle,thick,draw,inner sep=2.5}, every label/.style={rectangle,draw=none}]
  \node (1) at (0.0,0) [label={above,minimum height=13}:$ $,fill=black!50] {};
  \node (2) at (0.8,0) [label={above,minimum height=13}:$ $,fill=black!50] {};
  \node (3) at (1.6,0) [label={above,minimum height=13}:$ $,fill=black!50] {};
  \node (4) at (2.4,0) [label={above,minimum height=13}:$ $] {};
\end{scope}
\begin{scope}[every edge/.style={draw=black!60,line width=1.2}]
  \path (3) edge node {} (4);
\end{scope}
\begin{scope}[every edge/.style={draw=black, line width=3}]
  \path (1) edge node {} node[inner sep=3, above] {\bf\small 4} (2);
  \path (2) edge node {} (3);
\end{scope}
\begin{scope}[every node/.style={draw,inner sep=11.5,yshift=-4}, every label/.style={rectangle,draw=none,inner sep=6.0}, every fit/.append style=text badly centered]
\end{scope}
\end{tikzpicture}
 & $\longrightarrow$ & 
\begin{tikzpicture}
\begin{scope}[every node/.style={circle,thick,draw,inner sep=2.5}, every label/.style={rectangle,draw=none}]
  \node (1) at (0.0,0) [label={above,minimum height=13}:$ $,fill=black!50] {};
  \node (2) at (0.8,0) [label={above,minimum height=13}:$ $,fill=black!50] {};
  \node (3) at (1.6,0) [label={above,minimum height=13}:$ $] {};
  \node (4) at (2.4,0) [label={above,minimum height=13}:$ $] {};
\end{scope}
\begin{scope}[every edge/.style={draw=black!60,line width=1.2}]
  \path (2) edge node {} (3);
  \path (3) edge node {} (4);
\end{scope}
\begin{scope}[every edge/.style={draw=black, line width=3}]
  \path (1) edge node {} node[inner sep=3, above] {\bf\small 4} (2);
\end{scope}
\begin{scope}[every node/.style={draw,inner sep=11.5,yshift=-4}, every label/.style={rectangle,draw=none,inner sep=6.0}, every fit/.append style=text badly centered]
\end{scope}
\end{tikzpicture}
 & 1 & 1 & (a)/(b) \\

\begin{tikzpicture}
\begin{scope}[every node/.style={circle,thick,draw,inner sep=2.5}, every label/.style={rectangle,draw=none}]
  \node (1) at (0.0,0) [label={above,minimum height=13}:$ $,fill=black!50] {};
  \node (2) at (0.8,0) [label={above,minimum height=13}:$ $] {};
  \node (3) at (1.6,0) [label={above,minimum height=13}:$ $,fill=black!50] {};
  \node (4) at (2.4,0) [label={above,minimum height=13}:$ $,fill=black!50] {};
\end{scope}
\begin{scope}[every edge/.style={draw=black!60,line width=1.2}]
  \path (1) edge node {} node[inner sep=3, above] {\bf\small 4} (2);
  \path (2) edge node {} (3);
\end{scope}
\begin{scope}[every edge/.style={draw=black, line width=3}]
  \path (3) edge node {} (4);
\end{scope}
\begin{scope}[every node/.style={draw,inner sep=11.5,yshift=-4}, every label/.style={rectangle,draw=none,inner sep=6.0}, every fit/.append style=text badly centered]
\end{scope}
\end{tikzpicture}
 & $\longrightarrow$ & 
\begin{tikzpicture}
\begin{scope}[every node/.style={circle,thick,draw,inner sep=2.5}, every label/.style={rectangle,draw=none}]
  \node (1) at (0.0,0) [label={above,minimum height=13}:$ $] {};
  \node (2) at (0.8,0) [label={above,minimum height=13}:$ $] {};
  \node (3) at (1.6,0) [label={above,minimum height=13}:$ $,fill=black!50] {};
  \node (4) at (2.4,0) [label={above,minimum height=13}:$ $,fill=black!50] {};
\end{scope}
\begin{scope}[every edge/.style={draw=black!60,line width=1.2}]
  \path (1) edge node {} node[inner sep=3, above] {\bf\small 4} (2);
  \path (2) edge node {} (3);
\end{scope}
\begin{scope}[every edge/.style={draw=black, line width=3}]
  \path (3) edge node {} (4);
\end{scope}
\begin{scope}[every node/.style={draw,inner sep=11.5,yshift=-4}, every label/.style={rectangle,draw=none,inner sep=6.0}, every fit/.append style=text badly centered]
\end{scope}
\end{tikzpicture}
 & 0 & 0 & (a)/(b) \\

\begin{tikzpicture}
\begin{scope}[every node/.style={circle,thick,draw,inner sep=2.5}, every label/.style={rectangle,draw=none}]
  \node (1) at (0.0,0) [label={above,minimum height=13}:$ $,fill=black!50] {};
  \node (2) at (0.8,0) [label={above,minimum height=13}:$ $] {};
  \node (3) at (1.6,0) [label={above,minimum height=13}:$ $,fill=black!50] {};
  \node (4) at (2.4,0) [label={above,minimum height=13}:$ $] {};
\end{scope}
\begin{scope}[every edge/.style={draw=black!60,line width=1.2}]
  \path (1) edge node {} node[inner sep=3, above] {\bf\small 4} (2);
  \path (2) edge node {} (3);
  \path (3) edge node {} (4);
\end{scope}
\begin{scope}[every edge/.style={draw=black, line width=3}]
\end{scope}
\begin{scope}[every node/.style={draw,inner sep=11.5,yshift=-4}, every label/.style={rectangle,draw=none,inner sep=6.0}, every fit/.append style=text badly centered]
\end{scope}
\end{tikzpicture}
 & $\longrightarrow$ & 
\begin{tikzpicture}
\begin{scope}[every node/.style={circle,thick,draw,inner sep=2.5}, every label/.style={rectangle,draw=none}]
  \node (1) at (0.0,0) [label={above,minimum height=13}:$ $] {};
  \node (2) at (0.8,0) [label={above,minimum height=13}:$ $] {};
  \node (3) at (1.6,0) [label={above,minimum height=13}:$ $,fill=black!50] {};
  \node (4) at (2.4,0) [label={above,minimum height=13}:$ $] {};
\end{scope}
\begin{scope}[every edge/.style={draw=black!60,line width=1.2}]
  \path (1) edge node {} node[inner sep=3, above] {\bf\small 4} (2);
  \path (2) edge node {} (3);
  \path (3) edge node {} (4);
\end{scope}
\begin{scope}[every edge/.style={draw=black, line width=3}]
\end{scope}
\begin{scope}[every node/.style={draw,inner sep=11.5,yshift=-4}, every label/.style={rectangle,draw=none,inner sep=6.0}, every fit/.append style=text badly centered]
\end{scope}
\end{tikzpicture}
 & 0 & 0 & (a)/(b) \\

\begin{tikzpicture}
\begin{scope}[every node/.style={circle,thick,draw,inner sep=2.5}, every label/.style={rectangle,draw=none}]
  \node (1) at (0.0,0) [label={above,minimum height=13}:$ $,fill=black!50] {};
  \node (2) at (0.8,0) [label={above,minimum height=13}:$ $] {};
  \node (3) at (1.6,0) [label={above,minimum height=13}:$ $] {};
  \node (4) at (2.4,0) [label={above,minimum height=13}:$ $,fill=black!50] {};
\end{scope}
\begin{scope}[every edge/.style={draw=black!60,line width=1.2}]
  \path (1) edge node {} node[inner sep=3, above] {\bf\small 4} (2);
  \path (2) edge node {} (3);
  \path (3) edge node {} (4);
\end{scope}
\begin{scope}[every edge/.style={draw=black, line width=3}]
\end{scope}
\begin{scope}[every node/.style={draw,inner sep=11.5,yshift=-4}, every label/.style={rectangle,draw=none,inner sep=6.0}, every fit/.append style=text badly centered]
\end{scope}
\end{tikzpicture}
 & $\longrightarrow$ & 
\begin{tikzpicture}
\begin{scope}[every node/.style={circle,thick,draw,inner sep=2.5}, every label/.style={rectangle,draw=none}]
  \node (1) at (0.0,0) [label={above,minimum height=13}:$ $] {};
  \node (2) at (0.8,0) [label={above,minimum height=13}:$ $] {};
  \node (3) at (1.6,0) [label={above,minimum height=13}:$ $] {};
  \node (4) at (2.4,0) [label={above,minimum height=13}:$ $,fill=black!50] {};
\end{scope}
\begin{scope}[every edge/.style={draw=black!60,line width=1.2}]
  \path (1) edge node {} node[inner sep=3, above] {\bf\small 4} (2);
  \path (2) edge node {} (3);
  \path (3) edge node {} (4);
\end{scope}
\begin{scope}[every edge/.style={draw=black, line width=3}]
\end{scope}
\begin{scope}[every node/.style={draw,inner sep=11.5,yshift=-4}, every label/.style={rectangle,draw=none,inner sep=6.0}, every fit/.append style=text badly centered]
\end{scope}
\end{tikzpicture}
 & 0 & 0 & (a)/(b) \\

\begin{tikzpicture}
\begin{scope}[every node/.style={circle,thick,draw,inner sep=2.5}, every label/.style={rectangle,draw=none}]
  \node (1) at (0.0,0) [label={above,minimum height=13}:$ $,fill=black!50] {};
  \node (2) at (0.8,0) [label={above,minimum height=13}:$ $] {};
  \node (3) at (1.6,0) [label={above,minimum height=13}:$ $] {};
  \node (4) at (2.4,0) [label={above,minimum height=13}:$ $] {};
\end{scope}
\begin{scope}[every edge/.style={draw=black!60,line width=1.2}]
  \path (1) edge node {} node[inner sep=3, above] {\bf\small 4} (2);
  \path (2) edge node {} (3);
  \path (3) edge node {} (4);
\end{scope}
\begin{scope}[every edge/.style={draw=black, line width=3}]
\end{scope}
\begin{scope}[every node/.style={draw,inner sep=11.5,yshift=-4}, every label/.style={rectangle,draw=none,inner sep=6.0}, every fit/.append style=text badly centered]
\end{scope}
\end{tikzpicture}
 & $\longrightarrow$ & 
\begin{tikzpicture}
\begin{scope}[every node/.style={circle,thick,draw,inner sep=2.5}, every label/.style={rectangle,draw=none}]
  \node (1) at (0.0,0) [label={above,minimum height=13}:$ $] {};
  \node (2) at (0.8,0) [label={above,minimum height=13}:$ $] {};
  \node (3) at (1.6,0) [label={above,minimum height=13}:$ $] {};
  \node (4) at (2.4,0) [label={above,minimum height=13}:$ $] {};
\end{scope}
\begin{scope}[every edge/.style={draw=black!60,line width=1.2}]
  \path (1) edge node {} node[inner sep=3, above] {\bf\small 4} (2);
  \path (2) edge node {} (3);
  \path (3) edge node {} (4);
\end{scope}
\begin{scope}[every edge/.style={draw=black, line width=3}]
\end{scope}
\begin{scope}[every node/.style={draw,inner sep=11.5,yshift=-4}, every label/.style={rectangle,draw=none,inner sep=6.0}, every fit/.append style=text badly centered]
\end{scope}
\end{tikzpicture}
 & 0 & 0 & (a)/(b) \\

\begin{tikzpicture}
\begin{scope}[every node/.style={circle,thick,draw,inner sep=2.5}, every label/.style={rectangle,draw=none}]
  \node (1) at (0.0,0) [label={above,minimum height=13}:$ $,fill=black!50] {};
  \node (2) at (0.8,0) [label={above,minimum height=13}:$ $,fill=black!50] {};
  \node (3) at (1.6,0) [label={above,minimum height=13}:$ $,fill=black!50] {};
  \node (4) at (2.4,0) [label={above,minimum height=13}:$ $,fill=black!50] {};
\end{scope}
\begin{scope}[every edge/.style={draw=black!60,line width=1.2}]
\end{scope}
\begin{scope}[every edge/.style={draw=black, line width=3}]
  \path (1) edge node {} node[inner sep=3, above] {\bf\small 4} (2);
  \path (2) edge node {} (3);
  \path (3) edge node {} (4);
\end{scope}
\begin{scope}[every node/.style={draw,inner sep=11.5,yshift=-4}, every label/.style={rectangle,draw=none,inner sep=6.0}, every fit/.append style=text badly centered]
\end{scope}
\end{tikzpicture}
 & & \emph{(critical)} & 2 & 2 & (b) \\

\begin{tikzpicture}
\begin{scope}[every node/.style={circle,thick,draw,inner sep=2.5}, every label/.style={rectangle,draw=none}]
  \node (1) at (0.0,0) [label={above,minimum height=13}:$ $,fill=black!50] {};
  \node (2) at (0.8,0) [label={above,minimum height=13}:$ $,fill=black!50] {};
  \node (3) at (1.6,0) [label={above,minimum height=13}:$ $] {};
  \node (4) at (2.4,0) [label={above,minimum height=13}:$ $,fill=black!50] {};
\end{scope}
\begin{scope}[every edge/.style={draw=black!60,line width=1.2}]
  \path (2) edge node {} (3);
  \path (3) edge node {} (4);
\end{scope}
\begin{scope}[every edge/.style={draw=black, line width=3}]
  \path (1) edge node {} node[inner sep=3, above] {\bf\small 4} (2);
\end{scope}
\begin{scope}[every node/.style={draw,inner sep=11.5,yshift=-4}, every label/.style={rectangle,draw=none,inner sep=6.0}, every fit/.append style=text badly centered]
\end{scope}
\end{tikzpicture}
 & & \emph{(critical)} & 1 & 1 & (c) \\

\begin{tikzpicture}
\begin{scope}[every node/.style={circle,thick,draw,inner sep=2.5}, every label/.style={rectangle,draw=none}]
  \node (1) at (0.0,0) [label={above,minimum height=13}:$ $] {};
  \node (2) at (0.8,0) [label={above,minimum height=13}:$ $,fill=black!50] {};
  \node (3) at (1.6,0) [label={above,minimum height=13}:$ $,fill=black!50] {};
  \node (4) at (2.4,0) [label={above,minimum height=13}:$ $,fill=black!50] {};
\end{scope}
\begin{scope}[every edge/.style={draw=black!60,line width=1.2}]
  \path (1) edge node {} node[inner sep=3, above] {\bf\small 4} (2);
\end{scope}
\begin{scope}[every edge/.style={draw=black, line width=3}]
  \path (2) edge node {} (3);
  \path (3) edge node {} (4);
\end{scope}
\begin{scope}[every node/.style={draw,inner sep=11.5,yshift=-4}, every label/.style={rectangle,draw=none,inner sep=6.0}, every fit/.append style=text badly centered]
\end{scope}
\end{tikzpicture}
 & & \emph{(critical)} & 1 & 0 & (c) \\

\begin{tikzpicture}
\begin{scope}[every node/.style={circle,thick,draw,inner sep=2.5}, every label/.style={rectangle,draw=none}]
  \node (1) at (0.0,0) [label={above,minimum height=13}:$ $] {};
  \node (2) at (0.8,0) [label={above,minimum height=13}:$ $,fill=black!50] {};
  \node (3) at (1.6,0) [label={above,minimum height=13}:$ $,fill=black!50] {};
  \node (4) at (2.4,0) [label={above,minimum height=13}:$ $] {};
\end{scope}
\begin{scope}[every edge/.style={draw=black!60,line width=1.2}]
  \path (1) edge node {} node[inner sep=3, above] {\bf\small 4} (2);
  \path (3) edge node {} (4);
\end{scope}
\begin{scope}[every edge/.style={draw=black, line width=3}]
  \path (2) edge node {} (3);
\end{scope}
\begin{scope}[every node/.style={draw,inner sep=11.5,yshift=-4}, every label/.style={rectangle,draw=none,inner sep=6.0}, every fit/.append style=text badly centered]
\end{scope}
\end{tikzpicture}
 & $\longrightarrow$ & 
\begin{tikzpicture}
\begin{scope}[every node/.style={circle,thick,draw,inner sep=2.5}, every label/.style={rectangle,draw=none}]
  \node (1) at (0.0,0) [label={above,minimum height=13}:$ $] {};
  \node (2) at (0.8,0) [label={above,minimum height=13}:$ $,fill=black!50] {};
  \node (3) at (1.6,0) [label={above,minimum height=13}:$ $] {};
  \node (4) at (2.4,0) [label={above,minimum height=13}:$ $] {};
\end{scope}
\begin{scope}[every edge/.style={draw=black!60,line width=1.2}]
  \path (1) edge node {} node[inner sep=3, above] {\bf\small 4} (2);
  \path (2) edge node {} (3);
  \path (3) edge node {} (4);
\end{scope}
\begin{scope}[every edge/.style={draw=black, line width=3}]
\end{scope}
\begin{scope}[every node/.style={draw,inner sep=11.5,yshift=-4}, every label/.style={rectangle,draw=none,inner sep=6.0}, every fit/.append style=text badly centered]
\end{scope}
\end{tikzpicture}
 & 0 & 0 & (c) \\

\begin{tikzpicture}
\begin{scope}[every node/.style={circle,thick,draw,inner sep=2.5}, every label/.style={rectangle,draw=none}]
  \node (1) at (0.0,0) [label={above,minimum height=13}:$ $] {};
  \node (2) at (0.8,0) [label={above,minimum height=13}:$ $,fill=black!50] {};
  \node (3) at (1.6,0) [label={above,minimum height=13}:$ $] {};
  \node (4) at (2.4,0) [label={above,minimum height=13}:$ $,fill=black!50] {};
\end{scope}
\begin{scope}[every edge/.style={draw=black!60,line width=1.2}]
  \path (1) edge node {} node[inner sep=3, above] {\bf\small 4} (2);
  \path (2) edge node {} (3);
  \path (3) edge node {} (4);
\end{scope}
\begin{scope}[every edge/.style={draw=black, line width=3}]
\end{scope}
\begin{scope}[every node/.style={draw,inner sep=11.5,yshift=-4}, every label/.style={rectangle,draw=none,inner sep=6.0}, every fit/.append style=text badly centered]
\end{scope}
\end{tikzpicture}
 & & \emph{(critical)} & 0 & 0 & (c) \\

\end{tabular}

}

\vskip0.3cm

     \caption{Matching in the case $B_n$ with $n=4$ and $d=4$.}
    \label{table:Bn-matching-example}
  \end{table}
  \label{example:Bn-matching}
\end{example}

\begin{lemma}
  The matching constructed above for $d$ even is an acyclic weighted matching on $K$.
\end{lemma}

\begin{proof}
  \textbf{Part 1: the matching is acyclic.}
  The map $K \to (\N,\leq)$ which sends $\sigma$ to $q = \left\lfloor \frac{|\Gamma_1(\sigma)|}{d/2} \right\rfloor$ is a poset map compatible with the matching, and its fibers are exactly the subsets $K_q$ for $q\in\N$.
  Therefore we only need to prove that the matching on each fiber $K_q$ is acyclic.
  
  Let $P = \{ p_\text{c}, p_\text{a,b} \}$ be a two-element totally ordered poset with $p_\text{c} > p_\text{a,b}$.
  For a fixed $q\in\N$, consider the map $\eta\colon K_q \to P$ which sends $\sigma$ to the $p_\text{x}$ such that $\sigma$ occurs in case (x) (here cases (a) and (b) are united).
  Clearly $\eta$ is compatible with the matching. We want to prove that it is a poset map, and for this we only need to show that if $\eta(\tau) = p_\text{c}$ and $\tau\leq \sigma$ then $\eta(\sigma) = p_\text{c}$ also.
  We have that $\smash{\left\{ 1,\dots, q\frac d2, q\frac d2+2, \dots, (q+1)\frac d2 \right\} \subseteq \tau \subseteq \sigma}$. The simplex $\sigma$ cannot contain the vertex $\smash{q\frac d2 +1}$, because otherwise we would have $\sigma\in K_{q+1}$. Therefore $\eta(\sigma) = p_\text{c}$.
  
  On the fiber $\eta^{-1}(p_\text{a,b})$ the matching is acyclic because the same vertex $\smash{q\frac d2 +1}$ is always added or removed. On the fiber $\eta^{-1}(p_\text{c})$ the matching is acyclic by Lemma \ref{lemma:An-acyclic-weighted}.
  Therefore the entire matching on $K_q$ is acyclic.

  \textbf{Part 2: the matching is weighted.}
  Let $\sigma\in K_q$ be a simplex which occurs either in case (a) or case (b).
  We want to show that $\smash{v_\varphi(\sigma) = v_\varphi\left( \sigma \xor \left(q\frac d2 + 1\right) \right)}$.
  Suppose without loss of generality that $\sigma$ occurs in case (a), i.e.\ $r \geq 1$ and $q\frac d2 + 1 \in\sigma$. Let $\tau = \sigma \setminus \left\{ q\frac d2 + 1 \right\}$.
  The only difference between $\Gamma(\sigma)$ and $\Gamma(\tau)$ is that $\Gamma(\sigma)$ has a $\smash{B_{q\frac d2 +r}}$ component, whereas $\Gamma(\tau)$ has a $\smash{B_{q\frac d2}}$ component and an $A_{r-1}$ component instead.
  Since $\smash{1 \leq r < \frac d2}$, we have that
  \begin{align*}
    & \omega_\varphi(B_{q \frac d2 + r}) = \omega_\varphi(B_{q\frac d2}) = q \\
    & \text{and} \quad \omega_\varphi(A_{r-1}) = 0
  \end{align*}
  (the second equality holds because $r-1 \leq \frac d2 - 1 \leq d-2$).
  Therefore $v_\varphi(\sigma) = v_\varphi(\tau)$.
  
  For simplices occurring in case (c), the matching only involves changes in the connected components not containing the first vertex (i.e.\ connected components of type $A_k$). Therefore Lemma \ref{lemma:An-acyclic-weighted} applies.
\end{proof}

Now we are going to describe the critical simplices.
The matching has no critical simplices when $n$ is not a multiple of $\frac d2$.
On the other hand, if $n = k\frac d2$, we have two families of critical simplices: $\sigma_q$ (for $0\leq q \leq k-2$) and $\sigma_q'$ (for $0\leq q\leq k$).
See Table \ref{table:Bn-critical} and Figure \ref{fig:Bn-critical} for the definition of these simplices.
For instance, in Example \ref{example:Bn-matching}, the critical simplices are: $\sigma_0 = \{2,3,4\}$, $\sigma_2' = \{1,2,3,4\}$, $\sigma_1' = \{1,2,4\}$ and $\sigma_0' = \{1,3\}$.
See also Table \ref{table:Bn-critical-by-dimension}, where the critical simplices are listed by dimension.

\begin{table}[phtb]
  {\renewcommand{\arraystretch}{1.4}\begin{tabular}{c|c|c|c}
    Simplices & $|\sigma|$ & $v_\varphi(\sigma)$ & \\
    \hline \hline
    $\sigma_q = 1^{q\frac d2} (01^{\frac d2 -1})^{k-q-2}01^{d-1}$ & $n-k+q+1$ & $q+1$ & $0 \leq q \leq k-2$ \\
    $\sigma_q' = 1^{q\frac d2} (01^{\frac d2 -1})^{k-q}$ & $n-k+q$ & $q$ & $0 \leq q \leq k$ \\
    \hline
  \end{tabular}
  }
  \vskip0.3cm
  \caption{Description of the critical simplices for $B_n$, where $d$ is even and $n=k\frac d2$.}
  \label{table:Bn-critical}
\end{table}

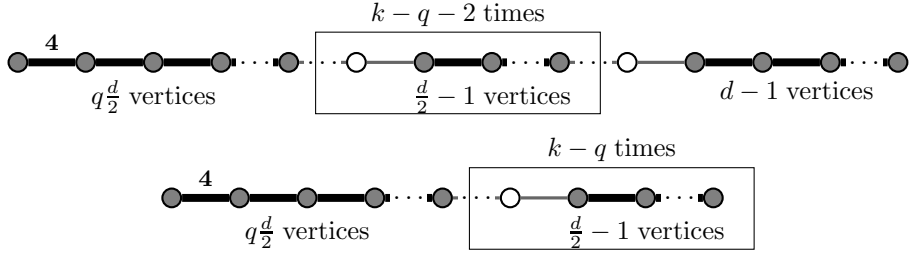
\begin{figure}[phtb]
  \begin{tikzpicture}
\begin{scope}[every node/.style={circle,thick,draw,inner sep=2.5}, every label/.style={rectangle,draw=none}]
  \node (0) at (0.0,0) [label={above,minimum height=13}:$ $,fill=black!50] {};
  \node (1) at (0.9,0) [label={above,minimum height=13}:$ $,fill=black!50] {};
  \node (2) at (1.8,0) [label={above,minimum height=13}:$ $,fill=black!50] [label={below}:$q\frac{d}{2}$\;vertices] {};
  \node (3) at (2.7,0) [label={above,minimum height=13}:$ $,fill=black!50] {};
  \node (4) at (3.6,0) [label={above,minimum height=13}:$ $,fill=black!50] {};
  \node (5) at (4.5,0) [label={above,minimum height=13}:$ $] {};
  \node (6) at (5.4,0) [label={above,minimum height=13}:$ $,fill=black!50] {};
  \node (7) at (6.3,0) [label={above,minimum height=13}:$ $,fill=black!50] [label={below}:$\frac{d}{2}-1$\;vertices] {};
  \node (8) at (7.2,0) [label={above,minimum height=13}:$ $,fill=black!50] {};
  \node (9) at (8.1,0) [label={above,minimum height=13}:$ $] {};
  \node (10) at (9.0,0) [label={above,minimum height=13}:$ $,fill=black!50] {};
  \node (11) at (9.9,0) [label={above,minimum height=13}:$ $,fill=black!50] {};
  \node (12) at (10.8,0) [label={above,minimum height=13}:$ $,fill=black!50] {};
  \node (13) at (11.7,0) [label={above,minimum height=13}:$ $,fill=black!50] {};
\end{scope}
\begin{scope}[every edge/.style={draw=black!60,line width=1.2}]
  \path (4) edge node [fill=white, rectangle, inner sep=1.0] {$\ldots$} (5);
  \path (5) edge node {} (6);
  \path (8) edge node [fill=white, rectangle, inner sep=1.0] {$\ldots$} (9);
  \path (9) edge node {} (10);
\end{scope}
\begin{scope}[every edge/.style={draw=black, line width=3}]
  \path (0) edge node {} node[inner sep=3, above] {\bf\small 4} (1);
  \path (1) edge node {} (2);
  \path (2) edge node {} (3);
  \path (3) edge node [fill=white, rectangle, inner sep=1.0, minimum height = 0.5cm] {$\ldots$} (4);
  \path (6) edge node {} (7);
  \path (7) edge node [fill=white, rectangle, inner sep=1.0, minimum height = 0.5cm] {$\ldots$} (8);
  \path (10) edge node {} (11);
  \path (11) edge node {} node [below=2] {$d-1$\;vertices} (12);
  \path (12) edge node [fill=white, rectangle, inner sep=1.0, minimum height = 0.5cm] {$\ldots$} (13);
\end{scope}
\begin{scope}[every node/.style={draw,inner sep=11.5,yshift=-4}, every label/.style={rectangle,draw=none,inner sep=6.0}, every fit/.append style=text badly centered]
  \node [fit=(5) (8), label={above}:$k-q-2$\;times] {};
\end{scope}
\end{tikzpicture}
   \begin{tikzpicture}
\begin{scope}[every node/.style={circle,thick,draw,inner sep=2.5}, every label/.style={rectangle,draw=none}]
  \node (0) at (0.0,0) [label={above,minimum height=13}:$ $,fill=black!50] {};
  \node (1) at (0.9,0) [label={above,minimum height=13}:$ $,fill=black!50] {};
  \node (2) at (1.8,0) [label={above,minimum height=13}:$ $,fill=black!50] [label={below}:$q\frac{d}{2}$\;vertices] {};
  \node (3) at (2.7,0) [label={above,minimum height=13}:$ $,fill=black!50] {};
  \node (4) at (3.6,0) [label={above,minimum height=13}:$ $,fill=black!50] {};
  \node (5) at (4.5,0) [label={above,minimum height=13}:$ $] {};
  \node (6) at (5.4,0) [label={above,minimum height=13}:$ $,fill=black!50] {};
  \node (7) at (6.3,0) [label={above,minimum height=13}:$ $,fill=black!50] [label={below}:$\frac{d}{2}-1$\;vertices] {};
  \node (8) at (7.2,0) [label={above,minimum height=13}:$ $,fill=black!50] {};
\end{scope}
\begin{scope}[every edge/.style={draw=black!60,line width=1.2}]
  \path (4) edge node [fill=white, rectangle, inner sep=1.0] {$\ldots$} (5);
  \path (5) edge node {} (6);
\end{scope}
\begin{scope}[every edge/.style={draw=black, line width=3}]
  \path (0) edge node {} node[inner sep=3, above] {\bf\small 4} (1);
  \path (1) edge node {} (2);
  \path (2) edge node {} (3);
  \path (3) edge node [fill=white, rectangle, inner sep=1.0, minimum height = 0.5cm] {$\ldots$} (4);
  \path (6) edge node {} (7);
  \path (7) edge node [fill=white, rectangle, inner sep=1.0, minimum height = 0.5cm] {$\ldots$} (8);
\end{scope}
\begin{scope}[every node/.style={draw,inner sep=11.5,yshift=-4}, every label/.style={rectangle,draw=none,inner sep=6.0}, every fit/.append style=text badly centered]
  \node [fit=(5) (8), label={above}:$k-q$\;times] {};
\end{scope}
\end{tikzpicture}
   \caption{Critical simplices for $B_n$, where $d$ is even and $n=k\frac d2$. Above is the simplex $\sigma_q$ ($0\leq q\leq k-2$) and below is the simplex $\sigma_q'$ ($0\leq q \leq k$).}
  \label{fig:Bn-critical}
\end{figure}

\begin{table}[phtb]
  {\renewcommand{\arraystretch}{1.4}\begin{tabular}{c|c|c}
    $|\sigma|$ & $v_\varphi(\sigma)$ & Simplices \\
    \hline \hline
    $n$ & $k$ & $\sigma_k'$ \\
    $n-1$ & $k-1$ & $\sigma_{k-2}$, $\sigma_{k-1}'$ \\
    $n-2$ & $k-2$ & $\sigma_{k-3}$, $\sigma_{k-2}'$ \\
    \vdots & \vdots & \vdots \\
    $n-k+1$ & $1$ & $\sigma_0$, $\sigma_1'$ \\
    $n-k$ & $0$ & $\sigma_0'$ \\
    \hline
  \end{tabular}
  }
  \vskip0.3cm
  \caption{Critical simplices for $B_n$ by dimension, where $d$ is even and $n=k\frac d2$.}
  \label{table:Bn-critical-by-dimension}
\end{table}

\begin{theorem}[Critical simplices in case $B_n$]
  The critical simplices for the matching on $K$ are those defined in Table \ref{table:Bn-critical}.
  In particular the matching is always precise, and has no critical simplices if $d$ is odd or if $d$ is even and $n\not\equiv 0 \pmod[\big]{\frac d2}$.
  In addition, if $d$ is even and $n\equiv 0 \pmod[\big]{\frac d2}$, the incidence numbers between the critical simplices in the Morse complex are as follows:
  \begin{align*}
    [\sigma_q: \sigma_{q-1}]^\M & = (-1)^{(q-1)\frac d2}, \\
    [\sigma_q: \sigma_q']^\M & = (-1)^{(k-1)(\frac d2-1) + q}, \\
    [\sigma_q': \sigma_{q-2}]^\M & = (-1)^{k(\frac d2-1)+q}, \\
    [\sigma_q': \sigma_{q-1}']^\M & = (-1)^{(q-1)\frac d2}.
  \end{align*}
  \label{thm:matching-Bn}
\end{theorem}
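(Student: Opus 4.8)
The plan is to establish the theorem in three stages: the description of the critical simplices, the claims on preciseness and perfectness, and the four families of incidence numbers.

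\emph{Critical simplices.} For $d$ odd the matching $\sigma\mapsto\sigma\xor 1$ is perfect, so there is nothing to prove. For $d$ even I would argue fiber by fiber along the decomposition $K=\bigsqcup_q K_q$: since $\sigma\mapsto q(\sigma)=\lfloor |\Gamma_1(\sigma)|/(d/2)\rfloor$ is a poset map compatible with $\M$ (as already used in the acyclicity lemma), it is enough to list the critical simplices of each fiber $K_q$. Inside $K_q$, step (b) produces a critical cell only when $n=q\frac d2$, i.e.\ only for $q=k$, namely the full simplex $1^n=\sigma_k'$; step (c) reduces verbatim to the matching on $K^A_{n',\,\frac d2-1}$ with $n'=n-q\frac d2-1$, whose critical cells are listed in Theorem~\ref{thm:matching-An} (note $\frac d2-1\le d-2$, so we are always in the first rows of Table~\ref{table:An-critical}). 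I would then check that prepending the block $1^{q\frac d2}0$ to an $A$-critical simplex of cardinality $c$ and $\varphi$-weight $w\in\{0,1\}$ gives a simplex of cardinality $q\frac d2+c$ and $\varphi$-weight $q+w$, the leftmost $B_{q d/2}$ component contributing exactly $q$ to the weight; substituting $n=k\frac d2$ and $n'=(k-q)\frac d2-1$ into Theorem~\ref{thm:matching-An} and comparing cardinalities, these become precisely $\sigma_q$ (weight $q+1$, $0\le q\le k-2$) and $\sigma_q'$ (weight $q$, $0\le q\le k$). This is the content of Table~\ref{table:Bn-critical} and amounts to routine arithmetic.

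\emph{Preciseness and perfectness.} From this list one reads off that $|\sigma|-v_\varphi(\sigma)=n-k$ for \emph{every} critical simplex $\sigma$. Since an edge $\sigma\to\tau$ of $G^\M$ satisfies $|\sigma|=|\tau|+1$, it forces $v_\varphi(\sigma)=v_\varphi(\tau)+1$, so $\M$ is $\varphi$-precise. When $d$ is odd, or $d$ is even and $n\not\equiv 0\pmod{d/2}$, the list is empty, so $\M$ is perfect.

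\emph{Incidence numbers.} First I would record the face relations $\sigma_{q-1}=\sigma_q\xor((q-1)\frac d2+1)$, $\sigma_q'=\sigma_q\xor((k-1)\frac d2+1)$ and $\sigma_{q-1}'=\sigma_q'\xor((q-1)\frac d2+1)$: in all three cases the target is a codimension-one face of the source, so the trivial (length-zero) alternating path already contributes $\pm1$. The crucial point — in the spirit of the last assertion of Theorem~\ref{thm:matching-An} — is that the trivial path is the \emph{only} alternating path joining these critical simplices. For $(\sigma_q,\sigma_q')$, which lies in a single $K_q$, the poset map $\eta$ of the acyclicity proof confines any alternating path to $\eta^{-1}(p_\text{c})\cong K^A_{n',\,\frac d2-1}$, where Theorem~\ref{thm:matching-An} gives uniqueness directly. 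For the two level-crossing pairs $(\sigma_q,\sigma_{q-1})$ and $(\sigma_q',\sigma_{q-1}')$ I would combine the fact that $q(\cdot)$ is non-increasing along an alternating path (matched pairs have equal $q$, and each down-step can only decrease $q$) with Remark~\ref{rmk:removed-vertex} (recording that step (c) of the matching only ever moves the ``special'' vertices), to show that once such a path descends to level $q-1$ and its tail enters step (c) of $K_{q-1}$ it must, by the $A_n$ uniqueness statement applied to $\sigma_{q-1}'$ (resp.\ $\sigma_{q-1}$), be trivial. Granting this, the three incidence numbers equal the corresponding simplicial incidence numbers, which I would compute using the orientation by increasing order of vertices: removing the vertex in position $j$ of $\sigma$ contributes $(-1)^{j-1}$, and the positions $(q-1)\frac d2+1$ (for $\sigma_{q-1}$ inside $\sigma_q$ and for $\sigma_{q-1}'$ inside $\sigma_q'$) and $q\frac d2+(k-q-2)(\frac d2-1)+\frac d2$ (for $\sigma_q'$ inside $\sigma_q$) give, after reduction modulo $2$, exactly the first, second and fourth formulas. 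Finally, since $\sigma_{q-2}$ is \emph{not} a face of $\sigma_q'$ (their symmetric difference has three vertices), I would obtain $[\sigma_q':\sigma_{q-2}]^\M$ with no further path analysis, by reading the coefficient of $\sigma_{q-2}'$ in the identity $(\delta^\M)^2=0$ for the integral Morse complex $C^0_*(K;\Z)^\M$: since $\sigma_{q-2}$ and $\sigma_{q-1}'$ are the only critical simplices of the intermediate cardinality, this yields $[\sigma_q':\sigma_{q-2}]^\M[\sigma_{q-2}:\sigma_{q-2}']^\M+[\sigma_q':\sigma_{q-1}']^\M[\sigma_{q-1}':\sigma_{q-2}']^\M=0$, and substituting the three already computed signs $\pm1$ gives $(-1)^{k(d/2-1)+q}$.

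The step I expect to be the main obstacle is the uniqueness-of-alternating-path claim for the level-crossing pairs $(\sigma_q,\sigma_{q-1})$ and $(\sigma_q',\sigma_{q-1}')$: in contrast with type $A_n$, an alternating path between them need not remain inside a single $K_q$, so the argument has to track simultaneously the level $q(\cdot)$ and, through Remark~\ref{rmk:removed-vertex}, which vertices each of the steps (a), (b), (c) adds or removes — a more delicate version of the bookkeeping carried out in the proof of Theorem~\ref{thm:matching-An}.
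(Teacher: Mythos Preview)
Your overall strategy matches the paper's, and your use of $(\delta^\M)^2=0$ to extract $[\sigma_q':\sigma_{q-2}]^\M$ from the other three signs is a genuine shortcut: the paper instead exhibits the unique alternating path from $\sigma_q'$ to $\sigma_{q-2}$ explicitly (it has length $k-q$, repeatedly shifting the $1^{d-1}$ block one $\frac d2$-step to the right) and sums the sign contributions by hand. Your trick avoids that computation entirely and is cleaner; the paper's route, on the other hand, gives the extra information that there is exactly one path and what it looks like, which is used later in the $\tilde C_n$ analysis.

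The one place where your sketch is too vague is the uniqueness argument for the three ``trivial'' pairs. You correctly observe that $q(\cdot)$ is non-increasing along alternating paths, but this alone does not pin down the first step out of $\sigma_q$ or $\sigma_q'$: a priori one could remove any vertex and still stay at level $q$ or drop to level $q-1$. The paper closes this with one global observation: every matched pair $(\sigma,\tau)$ in $K$ satisfies $\sigma=\tau\cup\{v\}$ with $v\equiv 1\pmod{\frac d2}$ (this holds in steps (a)/(b) by inspection, and in step (c) via Remark~\ref{rmk:removed-vertex} applied with $f=\frac d2-1$, after translating labels). Since every critical simplex contains all vertices $\not\equiv 1\pmod{\frac d2}$, an alternating path between critical cells can only ever remove vertices $\equiv 1\pmod{\frac d2}$. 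Now the vertices $\equiv 1\pmod{\frac d2}$ actually present in $\sigma_q$ are $j\frac d2+1$ for $0\le j\le q-1$ together with $(k-1)\frac d2+1$; removing the latter gives the critical $\sigma_q'$ (path stops), removing $(q-1)\frac d2+1$ gives $\sigma_{q-1}$ (trivial path), and removing any earlier one drops the level to $\le q-2$, which by your monotonicity observation can never recover to $q-1$. The same bookkeeping handles $(\sigma_q',\sigma_{q-1}')$ and confirms $(\sigma_q,\sigma_q')$. Once you insert this observation your argument is complete; your $(\delta^\M)^2=0$ step then finishes the fourth sign without further path analysis.
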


\begin{proof}
  
  \textbf{Part 1: critical simplices.}
  For $d$ odd there is nothing to prove. Suppose from now on that $d$ is even.
  For a given simplex $\sigma\in K$, let us consider each of the three cases that can occur in the construction of the matching.
  \begin{enumerate}[(a)]
    \item We have $\smash{q\frac d2+1\in \sigma}$, and $\sigma$ is not critical. Indeed, none of the simplices of Table \ref{table:Bn-critical} contains the vertex $\smash{q\frac d2+1}$.
    \item We have $\smash{q\frac d2+1\not\in\sigma}$ and $\smash{\left\{ q\frac d2 +2, \dots, (q+1)\frac d2\right\} \nsubseteq \sigma}$. In this case $\sigma$ is critical if and only if $\smash{n=q\frac d2}$.
    Indeed, the only simplex of this type in Table \ref{table:Bn-critical} is $\sigma_k'$ (which occurs for $q=k$ i.e.\ $\smash{n=q\frac d2}$).
    \item In the remaining case, we end up with the matching on $K^A_{n-q\frac d2 -1, \; \frac d2 - 1}$. By Theorem \ref{thm:matching-An}, this matching admits critical simplices if and only if
    \[ \textstyle n-q\frac d2 -1 \equiv \frac d2-1 \pmod d \quad \text{or} \quad n-q\frac d2 -1 \equiv -1 \pmod d, \]
    i.e.\ for $\smash{n\equiv 0 \pmod[\big]{\frac d2}}$.
    Notice that if $\smash{f=\frac d2 -1}$ then $\smash{d-2-f = \frac d2-1}$ also.
    Therefore, again by Theorem \ref{thm:matching-An}, for $\smash{n\equiv 0 \pmod[\big]{\frac d2}}$ the critical simplices are exactly the ones listed in Table \ref{table:Bn-critical}.
  \end{enumerate}
  
  For a fixed $\smash{n=k\frac d2}$, the quantity $|\sigma|-v_\varphi(\sigma)$ is constant among the critical simplices and is equal to $n-k$.
  Thus the matching is precise.
  
  \textbf{Part 2: incidence numbers.}
We are going to show that there is exactly one alternating path for each of the four pairs.
  Notice that, if $\sigma\to\tau$ is in the matching of $K_q$, then $\sigma = \tau \cup \{v\}$ with $\smash{v\equiv 1 \pmod[\big]{\frac d2}}$ and $\smash{v \geq q\frac d2 +1}$.
  In particular, if at a certain point of an alternating path one removes a vertex $v$ with $\smash{v\not\equiv 1 \pmod[\big]{\frac d2}}$, then that vertex will never be added again.
  But all the critical simplices contain every vertex $v$ with $\smash{v \not\equiv 1 \pmod[\big]{\frac d2}}$, so any alternating path between critical simplices cannot ever drop a vertex $v$ with $\smash{v \not\equiv 1 \pmod[\big]{\frac d2}}$.
  
  Let us now consider each of the four pairs.
  \begin{itemize}
    \item $(\sigma_q, \sigma_{q-1})$. We have that $\sigma_q = \sigma_{q-1} \cup \{v\}$ where $v = (q-1)\frac d2 + 1$, so there is the trivial alternating path $\sigma_q \to \sigma_{q-1}$ which contributes to the incidence number $[\sigma_q : \sigma_{q-1}]^\M$ by
    \[ [\sigma_q : \sigma_{q-1}] = (-1)^{|\{ w\in \sigma_q \mid w < v \}|} = (-1)^{(q-1)\frac d2}. \]
    Suppose by contradiction that there exists some other (non-trivial) alternating path
    \[ \sigma_q \rhd \tau_1 \lhd \rho_1\rhd \tau_2\lhd \rho_2\rhd  \dots \rhd \tau_m\lhd \rho_m \rhd \sigma_{q-1}, \quad m \geq 1. \]
    Let $\sigma_q = \tau_1 \cup \{u\}$. By the previous considerations, we must have $u\equiv 1 \pmod[\big]{\frac d2}$.
    If $u=(k-1)\frac d2 + 1$ then $\tau_1 = \sigma_q'$, but this is not possible since alternating paths stop at critical simplices. Similarly, if $u=(q-1)\frac d2 + 1$ then we would stop at $\tau_1 = \sigma_{q-1}$.
    Therefore we must have $u \leq (q-2)\frac d2 + 1$. But then $\tau_1 \in K_{q'}$ with $q' \leq q-2$, and by induction all the subsequent simplices in the alternating path must lie in
    \[ \bigcup_{q'' \leq q-2} K_{q''}. \]
    In particular $\sigma_{q-1} \in K_{q''}$ for some $q'' \leq q-2$, but this is a contradiction since $\sigma_{q-1}\in K_{q-1}$.
    
    \item $(\sigma_q, \sigma_q')$. This case is similar to the previous one, except for the fact that $\sigma_q = \sigma_q' \cup \{v \}$ for $v = (k-1)\frac d2 + 1$.
    So the only alternating path is the trivial one, which contributes to the incidence number by
    \begin{align*}
      [\sigma_q : \sigma_q'] & = (-1)^{|\{ w\in \sigma_q \mid w < v \}|} \\
      & = (-1)^{q\frac d2 + (k-q-1)(\frac d2-1)} \\
      & = (-1)^{(k-1)(\frac d2 - 1) + q}.
    \end{align*}
    
    \item $(\sigma_q', \sigma_{q-1}')$. This case is also similar to the previous ones. Here we have $\sigma_q' = \sigma_{q-1}' \cup \{v \}$ with $v = (q-1)\frac d2 + 1$, so the contribution to the incidence number due to the trivial alternating path is
    \[ [\sigma_q' : \sigma_{q-1}'] = (-1)^{|\{ w\in \sigma_q' \mid w < v \}|} = (-1)^{(q-1)\frac d2}. \]
    
    \item $(\sigma_q', \sigma_{q-2})$. This case is more complicated because the only alternating path is non-trivial.
    Suppose we have an alternating path
    \[ \sigma_q' \rhd \tau_1 \lhd \rho_1\rhd \tau_2\lhd \rho_2\rhd  \dots \rhd \tau_m\lhd \rho_m \rhd \sigma_{q-2}, \quad m \geq 1 \]
    ($m$ must be at least $1$ because $\sigma_{q-2}$ is not a face of $\sigma_q'$ in $K$).
    Let $\sigma_q' = \tau_1 \cup \{v\}$, with $v\equiv 1 \pmod[\big]{\frac d2}$.
    If $v = (q-1)\frac d2 + 1$ then $\tau_1 = \sigma_{q-1}'$ and we must stop because $\sigma_{q-1}'$ is already critical.
    If $v \leq (q-3)\frac d2 + 1$ we fall into some $K_{q'}$ with $q'\leq q-3$ and it is not possible to reach $\sigma_{q-2}\in K_{q-2}$.
    Therefore necessarily $v = (q-2) \frac d2 + 1$.
    Then $\tau_1$ is matched with
    \[ \rho_1 = \tau_1 \cup \textstyle\left\{ q\frac d2 + 1 \right\}. \]
    Then again, $\tau_2 = \rho_1 \setminus \{v_2\}$ and the only possibility for $v_2$ (in order to have $\tau_2 \neq \tau_1$ and not to fall into some $K_{q'}$ with $q' \leq q-3$) is $v_2 = (q-1)\frac d2 + 1$.
    Proceeding by induction we obtain that
    \begin{align*}
      \tau_i & = \rho_{i-1} \setminus \textstyle \left\{ (q-3+i)\frac d2 + 1 \right\}, \\
      \rho_i & = \tau_i \cup \textstyle \left\{ (q-1+i) \frac d2 + 1 \right\}.
    \end{align*}
    The path stops at $\tau_{k-q+1} = \sigma_{q-2}$, and its length is $m=k-q$.
    See Table \ref{table:Bn-alternating-path} for an example.
    The contribution of this path to the incidence number is
    \[ (-1)^{k-q} [\sigma_q':\tau_1][\rho_1:\tau_1] \cdots [\rho_{k-q}:\tau_{k-q}][\rho_{k-q}:\sigma_{q-2}] \]
    where
    \begin{align*}
      [\rho_{i-1}: \tau_i] & = (-1)^{(q-2)\frac d2 + (i-1)(\frac d2-1) }, \\
      [\rho_i : \tau_i] & = (-1)^{(q-2)\frac d2 + (i-1)(\frac d2-1) + d-1 } \\
      & = (-1)^{(q-2)\frac d2 + (i-1)(\frac d2-1) +1}
    \end{align*}
    (these formulas also hold for $\rho_0 = \sigma_q'$ and $\tau_{k-q+1} = \sigma_{q-2}$).
    Then
    \begin{align*}
      [\sigma_q' : \sigma_{q-2}]^\M & = (-1)^{k-q} \left(\prod_{i=1}^{k-q} [\rho_{i-1}:\tau_i][\rho_i:\tau_i]\right) [\rho_{k-q}: \sigma_{q-2}] \\
      & = (-1)^{k-q} (-1)^{k-q} (-1)^{(q-2)\frac d2 + (k-q)(\frac d2 - 1)} \\
      & = (-1)^{k(\frac d2 - 1) + q}. \qedhere
    \end{align*}
  \end{itemize}
\end{proof}

\begin{table}[htbp]

{\renewcommand{\arraystretch}{1.4}

\begin{tabular}{rcl@{\hspace{15pt}}|c}

\multicolumn{3}{c|}{Simplices} & $v_\varphi(\sigma)$ \\

\hline

$\sigma_2'$ & $=$ & 
\begin{tikzpicture}
\begin{scope}[every node/.style={circle,thick,draw,inner sep=2.5}, every label/.style={rectangle,draw=none}]
  \node (1) at (0.0,0) [label={above,minimum height=13}:$ $,fill=black!50] {};
  \node (2) at (0.8,0) [label={above,minimum height=13}:$ $,fill=black!50] {};
  \node (3) at (1.6,0) [label={above,minimum height=13}:$ $,fill=black!50] {};
  \node (4) at (2.4,0) [label={above,minimum height=13}:$ $,fill=black!50] {};
  \node (5) at (3.2,0) [label={above,minimum height=13}:$ $] {};
  \node (6) at (4.0,0) [label={above,minimum height=13}:$ $,fill=black!50] {};
  \node (7) at (4.8,0) [label={above,minimum height=13}:$ $] {};
  \node (8) at (5.6,0) [label={above,minimum height=13}:$ $,fill=black!50] {};
  \node (9) at (6.4,0) [label={above,minimum height=13}:$ $] {};
  \node (10) at (7.2,0) [label={above,minimum height=13}:$ $,fill=black!50] {};
\end{scope}
\begin{scope}[every edge/.style={draw=black!60,line width=1.2}]
  \path (4) edge node {} (5);
  \path (5) edge node {} (6);
  \path (6) edge node {} (7);
  \path (7) edge node {} (8);
  \path (8) edge node {} (9);
  \path (9) edge node {} (10);
\end{scope}
\begin{scope}[every edge/.style={draw=black, line width=3}]
  \path (1) edge node {} node[inner sep=3, above] {\bf\small 4} (2);
  \path (2) edge node {} (3);
  \path (3) edge node {} (4);
\end{scope}
\begin{scope}[every node/.style={draw,inner sep=11.5,yshift=-4}, every label/.style={rectangle,draw=none,inner sep=6.0}, every fit/.append style=text badly centered]
\end{scope}
\end{tikzpicture}
 & 2 \\

$\tau_1$ & $=$ & 
\begin{tikzpicture}
\begin{scope}[every node/.style={circle,thick,draw,inner sep=2.5}, every label/.style={rectangle,draw=none}]
  \node (1) at (0.0,0) [label={above,minimum height=13}:$ $] {};
  \node (2) at (0.8,0) [label={above,minimum height=13}:$ $,fill=black!50] {};
  \node (3) at (1.6,0) [label={above,minimum height=13}:$ $,fill=black!50] {};
  \node (4) at (2.4,0) [label={above,minimum height=13}:$ $,fill=black!50] {};
  \node (5) at (3.2,0) [label={above,minimum height=13}:$ $] {};
  \node (6) at (4.0,0) [label={above,minimum height=13}:$ $,fill=black!50] {};
  \node (7) at (4.8,0) [label={above,minimum height=13}:$ $] {};
  \node (8) at (5.6,0) [label={above,minimum height=13}:$ $,fill=black!50] {};
  \node (9) at (6.4,0) [label={above,minimum height=13}:$ $] {};
  \node (10) at (7.2,0) [label={above,minimum height=13}:$ $,fill=black!50] {};
\end{scope}
\begin{scope}[every edge/.style={draw=black!60,line width=1.2}]
  \path (1) edge node {} node[inner sep=3, above] {\bf\small 4} (2);
  \path (4) edge node {} (5);
  \path (5) edge node {} (6);
  \path (6) edge node {} (7);
  \path (7) edge node {} (8);
  \path (8) edge node {} (9);
  \path (9) edge node {} (10);
\end{scope}
\begin{scope}[every edge/.style={draw=black, line width=3}]
  \path (2) edge node {} (3);
  \path (3) edge node {} (4);
\end{scope}
\begin{scope}[every node/.style={draw,inner sep=11.5,yshift=-4}, every label/.style={rectangle,draw=none,inner sep=6.0}, every fit/.append style=text badly centered]
\end{scope}
\end{tikzpicture}
 & 1 \\

$\rho_1$ & $=$ & 
\begin{tikzpicture}
\begin{scope}[every node/.style={circle,thick,draw,inner sep=2.5}, every label/.style={rectangle,draw=none}]
  \node (1) at (0.0,0) [label={above,minimum height=13}:$ $] {};
  \node (2) at (0.8,0) [label={above,minimum height=13}:$ $,fill=black!50] {};
  \node (3) at (1.6,0) [label={above,minimum height=13}:$ $,fill=black!50] {};
  \node (4) at (2.4,0) [label={above,minimum height=13}:$ $,fill=black!50] {};
  \node (5) at (3.2,0) [label={above,minimum height=13}:$ $,fill=black!50] {};
  \node (6) at (4.0,0) [label={above,minimum height=13}:$ $,fill=black!50] {};
  \node (7) at (4.8,0) [label={above,minimum height=13}:$ $] {};
  \node (8) at (5.6,0) [label={above,minimum height=13}:$ $,fill=black!50] {};
  \node (9) at (6.4,0) [label={above,minimum height=13}:$ $] {};
  \node (10) at (7.2,0) [label={above,minimum height=13}:$ $,fill=black!50] {};
\end{scope}
\begin{scope}[every edge/.style={draw=black!60,line width=1.2}]
  \path (1) edge node {} node[inner sep=3, above] {\bf\small 4} (2);
  \path (6) edge node {} (7);
  \path (7) edge node {} (8);
  \path (8) edge node {} (9);
  \path (9) edge node {} (10);
\end{scope}
\begin{scope}[every edge/.style={draw=black, line width=3}]
  \path (2) edge node {} (3);
  \path (3) edge node {} (4);
  \path (4) edge node {} (5);
  \path (5) edge node {} (6);
\end{scope}
\begin{scope}[every node/.style={draw,inner sep=11.5,yshift=-4}, every label/.style={rectangle,draw=none,inner sep=6.0}, every fit/.append style=text badly centered]
\end{scope}
\end{tikzpicture}
 & 1 \\

$\tau_2$ & $=$ & 
\begin{tikzpicture}
\begin{scope}[every node/.style={circle,thick,draw,inner sep=2.5}, every label/.style={rectangle,draw=none}]
  \node (1) at (0.0,0) [label={above,minimum height=13}:$ $] {};
  \node (2) at (0.8,0) [label={above,minimum height=13}:$ $,fill=black!50] {};
  \node (3) at (1.6,0) [label={above,minimum height=13}:$ $] {};
  \node (4) at (2.4,0) [label={above,minimum height=13}:$ $,fill=black!50] {};
  \node (5) at (3.2,0) [label={above,minimum height=13}:$ $,fill=black!50] {};
  \node (6) at (4.0,0) [label={above,minimum height=13}:$ $,fill=black!50] {};
  \node (7) at (4.8,0) [label={above,minimum height=13}:$ $] {};
  \node (8) at (5.6,0) [label={above,minimum height=13}:$ $,fill=black!50] {};
  \node (9) at (6.4,0) [label={above,minimum height=13}:$ $] {};
  \node (10) at (7.2,0) [label={above,minimum height=13}:$ $,fill=black!50] {};
\end{scope}
\begin{scope}[every edge/.style={draw=black!60,line width=1.2}]
  \path (1) edge node {} node[inner sep=3, above] {\bf\small 4} (2);
  \path (2) edge node {} (3);
  \path (3) edge node {} (4);
  \path (6) edge node {} (7);
  \path (7) edge node {} (8);
  \path (8) edge node {} (9);
  \path (9) edge node {} (10);
\end{scope}
\begin{scope}[every edge/.style={draw=black, line width=3}]
  \path (4) edge node {} (5);
  \path (5) edge node {} (6);
\end{scope}
\begin{scope}[every node/.style={draw,inner sep=11.5,yshift=-4}, every label/.style={rectangle,draw=none,inner sep=6.0}, every fit/.append style=text badly centered]
\end{scope}
\end{tikzpicture}
 & 1 \\

$\rho_2$ & $=$ & 
\begin{tikzpicture}
\begin{scope}[every node/.style={circle,thick,draw,inner sep=2.5}, every label/.style={rectangle,draw=none}]
  \node (1) at (0.0,0) [label={above,minimum height=13}:$ $] {};
  \node (2) at (0.8,0) [label={above,minimum height=13}:$ $,fill=black!50] {};
  \node (3) at (1.6,0) [label={above,minimum height=13}:$ $] {};
  \node (4) at (2.4,0) [label={above,minimum height=13}:$ $,fill=black!50] {};
  \node (5) at (3.2,0) [label={above,minimum height=13}:$ $,fill=black!50] {};
  \node (6) at (4.0,0) [label={above,minimum height=13}:$ $,fill=black!50] {};
  \node (7) at (4.8,0) [label={above,minimum height=13}:$ $,fill=black!50] {};
  \node (8) at (5.6,0) [label={above,minimum height=13}:$ $,fill=black!50] {};
  \node (9) at (6.4,0) [label={above,minimum height=13}:$ $] {};
  \node (10) at (7.2,0) [label={above,minimum height=13}:$ $,fill=black!50] {};
\end{scope}
\begin{scope}[every edge/.style={draw=black!60,line width=1.2}]
  \path (1) edge node {} node[inner sep=3, above] {\bf\small 4} (2);
  \path (2) edge node {} (3);
  \path (3) edge node {} (4);
  \path (8) edge node {} (9);
  \path (9) edge node {} (10);
\end{scope}
\begin{scope}[every edge/.style={draw=black, line width=3}]
  \path (4) edge node {} (5);
  \path (5) edge node {} (6);
  \path (6) edge node {} (7);
  \path (7) edge node {} (8);
\end{scope}
\begin{scope}[every node/.style={draw,inner sep=11.5,yshift=-4}, every label/.style={rectangle,draw=none,inner sep=6.0}, every fit/.append style=text badly centered]
\end{scope}
\end{tikzpicture}
 & 1 \\

$\tau_3$ & $=$ & 
\begin{tikzpicture}
\begin{scope}[every node/.style={circle,thick,draw,inner sep=2.5}, every label/.style={rectangle,draw=none}]
  \node (1) at (0.0,0) [label={above,minimum height=13}:$ $] {};
  \node (2) at (0.8,0) [label={above,minimum height=13}:$ $,fill=black!50] {};
  \node (3) at (1.6,0) [label={above,minimum height=13}:$ $] {};
  \node (4) at (2.4,0) [label={above,minimum height=13}:$ $,fill=black!50] {};
  \node (5) at (3.2,0) [label={above,minimum height=13}:$ $] {};
  \node (6) at (4.0,0) [label={above,minimum height=13}:$ $,fill=black!50] {};
  \node (7) at (4.8,0) [label={above,minimum height=13}:$ $,fill=black!50] {};
  \node (8) at (5.6,0) [label={above,minimum height=13}:$ $,fill=black!50] {};
  \node (9) at (6.4,0) [label={above,minimum height=13}:$ $] {};
  \node (10) at (7.2,0) [label={above,minimum height=13}:$ $,fill=black!50] {};
\end{scope}
\begin{scope}[every edge/.style={draw=black!60,line width=1.2}]
  \path (1) edge node {} node[inner sep=3, above] {\bf\small 4} (2);
  \path (2) edge node {} (3);
  \path (3) edge node {} (4);
  \path (4) edge node {} (5);
  \path (5) edge node {} (6);
  \path (8) edge node {} (9);
  \path (9) edge node {} (10);
\end{scope}
\begin{scope}[every edge/.style={draw=black, line width=3}]
  \path (6) edge node {} (7);
  \path (7) edge node {} (8);
\end{scope}
\begin{scope}[every node/.style={draw,inner sep=11.5,yshift=-4}, every label/.style={rectangle,draw=none,inner sep=6.0}, every fit/.append style=text badly centered]
\end{scope}
\end{tikzpicture}
 & 1 \\

$\rho_3$ & $=$ & 
\begin{tikzpicture}
\begin{scope}[every node/.style={circle,thick,draw,inner sep=2.5}, every label/.style={rectangle,draw=none}]
  \node (1) at (0.0,0) [label={above,minimum height=13}:$ $] {};
  \node (2) at (0.8,0) [label={above,minimum height=13}:$ $,fill=black!50] {};
  \node (3) at (1.6,0) [label={above,minimum height=13}:$ $] {};
  \node (4) at (2.4,0) [label={above,minimum height=13}:$ $,fill=black!50] {};
  \node (5) at (3.2,0) [label={above,minimum height=13}:$ $] {};
  \node (6) at (4.0,0) [label={above,minimum height=13}:$ $,fill=black!50] {};
  \node (7) at (4.8,0) [label={above,minimum height=13}:$ $,fill=black!50] {};
  \node (8) at (5.6,0) [label={above,minimum height=13}:$ $,fill=black!50] {};
  \node (9) at (6.4,0) [label={above,minimum height=13}:$ $,fill=black!50] {};
  \node (10) at (7.2,0) [label={above,minimum height=13}:$ $,fill=black!50] {};
\end{scope}
\begin{scope}[every edge/.style={draw=black!60,line width=1.2}]
  \path (1) edge node {} node[inner sep=3, above] {\bf\small 4} (2);
  \path (2) edge node {} (3);
  \path (3) edge node {} (4);
  \path (4) edge node {} (5);
  \path (5) edge node {} (6);
\end{scope}
\begin{scope}[every edge/.style={draw=black, line width=3}]
  \path (6) edge node {} (7);
  \path (7) edge node {} (8);
  \path (8) edge node {} (9);
  \path (9) edge node {} (10);
\end{scope}
\begin{scope}[every node/.style={draw,inner sep=11.5,yshift=-4}, every label/.style={rectangle,draw=none,inner sep=6.0}, every fit/.append style=text badly centered]
\end{scope}
\end{tikzpicture}
 & 1 \\

$\sigma_0$ & $=$ & 
\begin{tikzpicture}
\begin{scope}[every node/.style={circle,thick,draw,inner sep=2.5}, every label/.style={rectangle,draw=none}]
  \node (1) at (0.0,0) [label={above,minimum height=13}:$ $] {};
  \node (2) at (0.8,0) [label={above,minimum height=13}:$ $,fill=black!50] {};
  \node (3) at (1.6,0) [label={above,minimum height=13}:$ $] {};
  \node (4) at (2.4,0) [label={above,minimum height=13}:$ $,fill=black!50] {};
  \node (5) at (3.2,0) [label={above,minimum height=13}:$ $] {};
  \node (6) at (4.0,0) [label={above,minimum height=13}:$ $,fill=black!50] {};
  \node (7) at (4.8,0) [label={above,minimum height=13}:$ $] {};
  \node (8) at (5.6,0) [label={above,minimum height=13}:$ $,fill=black!50] {};
  \node (9) at (6.4,0) [label={above,minimum height=13}:$ $,fill=black!50] {};
  \node (10) at (7.2,0) [label={above,minimum height=13}:$ $,fill=black!50] {};
\end{scope}
\begin{scope}[every edge/.style={draw=black!60,line width=1.2}]
  \path (1) edge node {} node[inner sep=3, above] {\bf\small 4} (2);
  \path (2) edge node {} (3);
  \path (3) edge node {} (4);
  \path (4) edge node {} (5);
  \path (5) edge node {} (6);
  \path (6) edge node {} (7);
  \path (7) edge node {} (8);
\end{scope}
\begin{scope}[every edge/.style={draw=black, line width=3}]
  \path (8) edge node {} (9);
  \path (9) edge node {} (10);
\end{scope}
\begin{scope}[every node/.style={draw,inner sep=11.5,yshift=-4}, every label/.style={rectangle,draw=none,inner sep=6.0}, every fit/.append style=text badly centered]
\end{scope}
\end{tikzpicture}
 & 1 \\

\end{tabular}

}

\vskip0.3cm

   \caption{The only alternating path from $\sigma_q'$ to $\sigma_{q-2}$ for $n=10$, $d=4$, $k=5$ and $q=2$.}
  \label{table:Bn-alternating-path}
\end{table}

Having a complete description of a precise matching on $K$, we can now compute the homology $H_*(\GW;R)$ for $\GW$ of type $B_n$.
We recover the result of \cite{de1999arithmetic}.

\begin{theorem}[Homology in case $B_n$ \cite{de1999arithmetic}]
  For an Artin group $\GW$ of type $B_n$, we have
  \[ H_m(\GW;R)_{\varphi_d} \cong
    \begin{cases}
      R / (\varphi_d) & \text{if $d$ is even, $n=k\frac d2$ and $n-k\leq m \leq n-1$}, \\
      0 & \text{otherwise}.
    \end{cases}
  \]
  \label{thm:homology-Bn}
\end{theorem}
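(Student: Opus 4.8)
The plan is to feed the precise matching of Theorem~\ref{thm:matching-Bn} into the homology formula of Theorem~\ref{thm:homology-artin-groups}. Since $B_n$ is of finite type, $K$ is the full simplex on $\{1,\dots,n\}$, so $C^0_*$ is acyclic and the formula of Theorem~\ref{thm:homology-artin-groups} specializes to $H_m(\XW;R)\cong\bigoplus_\varphi (R/(\varphi))^{\oplus\rk\delta^\varphi_{m+1}}$; moreover the $K(\pi,1)$ conjecture holds for $B_n$, so the left-hand side equals $H_m(\GW;R)$. Passing to the $\varphi_d$-primary component, the whole statement reduces to computing $\rk\delta^{\varphi_d}_m$ for all $m$, where $\delta^{\varphi_d}$ is the differential of the rational Morse complex $V_*=C^0_*(K;\Q)^{\M}$ associated with the $\varphi_d$-precise matching $\M=\M_{\varphi_d}$ of Theorem~\ref{thm:matching-Bn}.

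First I would dispose of the cases in which the matching is perfect, namely $d$ odd, or $d$ even with $n\not\equiv 0\pmod{d/2}$: here $V_*=0$, hence $\rk\delta^{\varphi_d}_m=0$ for every $m$, and so $H_m(\GW;R)_{\varphi_d}=0$, as claimed.

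It remains to treat $d$ even with $n=k\frac d2$. From Table~\ref{table:Bn-critical-by-dimension} the complex $V_*$ is concentrated in degrees $n-k,n-k+1,\dots,n$, with $\dim_\Q V_{n-k}=\dim_\Q V_n=1$ (generators $\sigma_0'$ and $\sigma_k'$) and $\dim_\Q V_m=2$ for $n-k<m<n$. The crucial point is that $V_*$ is acyclic: $K$ is the full simplex, hence contractible, so $C^0_*(K;\Q)$ has vanishing reduced homology, and therefore so does its Morse complex $V_*$ (cf.\ the paragraph after Theorem~\ref{thm:morsecomplex}). Acyclicity gives $\rk\delta_m+\rk\delta_{m+1}=\dim_\Q V_m$ for every $m$; solving this recursively from the top, where $\delta_{n+1}=0$, yields $\rk\delta_m=1$ for $n-k+1\le m\le n$ and $\rk\delta_m=0$ otherwise. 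Hence $\rk\delta^{\varphi_d}_{m+1}=1$ exactly when $n-k\le m\le n-1$, and substituting into the formula above gives the stated answer.

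I do not expect a serious obstacle: once Theorem~\ref{thm:matching-Bn} is in hand, the computation is essentially bookkeeping. The two points that require attention are the degree shift between the index $m$ of $H_m$ and the index $m+1$ of $\delta$ in Theorem~\ref{thm:homology-artin-groups}, and the fact that the critical simplex $\sigma_0'$ has $\varphi$-weight $0$ --- so it vanishes from the weighted Morse complex, yet it still contributes a one-dimensional summand to $V_{n-k}$, and this summand is precisely what produces the torsion in the lowest degree $m=n-k$. Note that the explicit incidence numbers computed in Theorem~\ref{thm:matching-Bn} play no role in this homology computation: acyclicity of $V_*$ together with the dimension count from Table~\ref{table:Bn-critical-by-dimension} suffices.
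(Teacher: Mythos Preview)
Your argument is correct and is a genuinely different route from the paper's. The paper computes $\rk\delta^{\varphi}_{m+1}$ directly: at the extremes $m=n-1$ and $m=n-k$ it reads off that a single incidence number is nonzero, and in the intermediate range $n-k+1\le m\le n-2$ it uses the explicit signs from Theorem~\ref{thm:matching-Bn} to assemble the $2\times 2$ boundary matrix and check that its determinant vanishes, forcing rank $1$. You instead exploit the global fact that $C^0_*(K;\Q)$ (hence its Morse complex $V_*$) is acyclic for $K$ the full simplex, and then determine all the ranks at once from the dimension pattern $1,2,\dots,2,1$ via the rank--nullity recursion. Your route is cleaner for $B_n$ specifically and, as you note, renders the explicit incidence formulas of Theorem~\ref{thm:matching-Bn} unnecessary for this particular homology computation. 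The paper's approach, on the other hand, is what actually gets reused later: the $\tilde C_n$ matching is built out of the $B_n$ matching, and there $K$ is no longer a full simplex, so the acyclicity shortcut is not available in the same form and the explicit incidence numbers (transported into Theorem~\ref{thm:matching-tCn}) drive the computation of Theorem~\ref{thm:homology-tCn}.
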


\begin{proof}
  For a fixed $\varphi = \varphi_d$, we need to compute the boundary $\delta_{m+1}^\varphi$.
  Assume that $n=k\frac d2$, otherwise there are no critical simplices.
  In top dimension ($m+1=n$) we have $\rk \delta_n^\varphi = 1$, because $[\sigma_k':\sigma_{k-1}'] \neq 0$.
  For $m\leq n-k-1$ the boundary $\delta_{m+1}^\varphi$ vanishes, because there are no critical simplices in dimension $\leq n-k-1$.
  For $m = n-k$ we have $\rk \delta_{m+1}^\varphi = 1$, because $[\sigma_1':\sigma_0'] \neq 0$.
  Finally, for $n-k+1 \leq m \leq n-2$, we have (set $q=m-n+k$):
  \begin{align*}
    \rk \delta_{m+1}^\varphi & = \rk
    \left( \begin{matrix}
      [\sigma_q: \sigma_{q-1}] & [\sigma_{q+1}': \sigma_{q-1}] \\
      [\sigma_q: \sigma_q'] & [\sigma_{q+1}': \sigma_q']
    \end{matrix} \right) \\
    & = \rk \left( \begin{matrix}
      (-1)^{(q-1)\frac d2} & (-1)^{k(\frac d2 - 1) + q+1} \\
      (-1)^{(k-1)(\frac d2 - 1) + q} & (-1)^{q\frac d2}
    \end{matrix} \right) = 1,
  \end{align*}
  because
  \[ \det \left( \begin{matrix}
      (-1)^{(q-1)\frac d2} & (-1)^{k(\frac d2 - 1) + q+1} \\
      (-1)^{(k-1)(\frac d2 - 1) + q} & (-1)^{q\frac d2}
    \end{matrix} \right) = (-1)^{\frac d2} - (-1)^{\frac d2} = 0.
  \]
  To summarize, we have: $\rk \delta_{m+1}^\varphi = 1$ for $n-k \leq m \leq n-1$, and $\rk \delta_{m+1}^\varphi = 0$ otherwise.
  We conclude applying Theorem \ref{thm:homology-artin-groups}.
\end{proof}

\subsection{Case $\tilde A_n$}
\label{sec:precise-matching-tAn}

Consider now the case of affine Artin groups of type $\tilde A_n$ (see Figure \ref{fig:tAn} for a picture of the corresponding Coxeter graph).
Recall that, for affine Artin groups, the simplicial complex $K$ consists of all the simplices $\sigma \subseteq S = \{0,\dots, n\}$ except for the full simplex $\sigma = \{0,\dots, n\}$.
For any $\sigma\in K$, the induced subgraph $\Gamma(\sigma)$ consists only of connected components of type $A_k$. The weight $v_\varphi(\sigma)$ is then computed as in Section \ref{sec:precise-matching-An}.

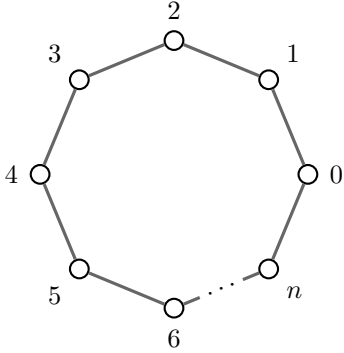
\begin{figure}[htbp]
  \begin{tikzpicture}
\begin{scope}[every node/.style={circle,thick,draw,inner sep=2.5}, every label/.style={rectangle,draw=none}]
  \node (0) at (0.00:1.78) [label={[label distance=1.5]0.00:$0$}] {};
  \node (1) at (45.00:1.78) [label={[label distance=1.5]45.00:$1$}] {};
  \node (2) at (90.00:1.78) [label={[label distance=1.5]90.00:$2$}] {};
  \node (3) at (135.00:1.78) [label={[label distance=1.5]135.00:$3$}] {};
  \node (4) at (180.00:1.78) [label={[label distance=1.5]180.00:$4$}] {};
  \node (5) at (225.00:1.78) [label={[label distance=1.5]225.00:$5$}] {};
  \node (6) at (270.00:1.78) [label={[label distance=1.5]270.00:$6$}] {};
  \node (n) at (315.00:1.78) [label={[label distance=1.5]315.00:$n$}] {};
\end{scope}
\begin{scope}[every edge/.style={draw=black!60,line width=1.2}]
  \path (0) edge node {} (1);
  \path (1) edge node {} (2);
  \path (2) edge node {} (3);
  \path (3) edge node {} (4);
  \path (4) edge node {} (5);
  \path (5) edge node {} (6);
  \path (6) edge node [fill=white, rectangle, inner sep=3.0, rotate=382.50] {$\ldots$} (n);
  \path (n) edge node {} (0);
\end{scope}
\end{tikzpicture}
   \caption{A Coxeter graph of type $\tilde A_n$.} 
  \label{fig:tAn}
\end{figure}

If $\sigma$ is a simplex of $K$ then $\sigma$ misses at least one vertex of $\{0,1, \dots , n\}$. Define $h$ to be the first vertex missing from $\sigma$ reading counterclockwise from $0$. Then $\sigma$ belongs to a unique
\[ K_h = \{ \sigma \in K \mid h\not\in \sigma \; \text{and} \; \{0,\dots, h-1\} \subseteq \sigma \}. \]
In other words, the subsets $K_h$ form a partition of $K$:
\[ K = \bigsqcup_{h=0}^n K_h. \]
Construct a matching on a fixed $K_h$ as follows.
Let $h=qd+r$ with $0\leq r \leq d-1$, and let $m=n-qd$.
Consider the following clockwise relabeling of the vertices: $r-1$ becomes $1$, $r-2$ becomes $2$, \dots, $0$ becomes $r$, $n$ becomes $r+1$, \dots, $h+2$ becomes $m-1$, $h+1$ becomes $m$ (the vertices $r$, $r+1$, \dots, $h$ are forgotten).
This relabeling induces a poset isomorphism
\[ K_h \xrightarrow{\cong} K^A_{m,r}. \]
Then equip $K_h$ with the pull-back of the matching on $K^A_{m,r}$ defined in Section \ref{sec:precise-matching-An} (for $h=n$ the only simplex in $K_h$ is critical).

\begin{lemma}
  The matching defined above is an acyclic weighted matching on $K$.
  \label{lemma:tAn-acyclic-weighted}
\end{lemma}

\begin{proof}
  Consider the map $\eta\colon K \to (\N, \leq)$ which sends $\sigma\in K$ to $\min \{ h\in \N \mid h\not\in\sigma\}$.
  Then $\eta$ is a poset map with fibers $\eta^{-1}(h) = K_h$.
  The matching on each $K_h\cong K^A_{m,r}$ is acyclic by Lemma \ref{lemma:An-acyclic-weighted}, therefore the whole matching on $K$ is acyclic.

  Fix $h\in \{0,\dots, n\}$. The bijection $K_h \xrightarrow{\cong} K^A_{m,r}$ is such that, if $\sigma \mapsto \hat\sigma$, then $v_\varphi(\sigma) = v_\varphi(\hat \sigma) + q$. Indeed, $\Gamma(\sigma)$ is obtained from $\Gamma(\hat\sigma)$ by adding $qd$ vertices to one (possibly empty) connected component, and this increases the weight by $q$.
  Then, since the matching on $K^A_{m,r}$ is weighted, its pull-back also is.
\end{proof}

\begin{table}[htbp]
  {\renewcommand{\arraystretch}{1.4}\setlength\tabcolsep{3.5pt}\begin{tabular}{c|c|c|c|c}
    Case & Simplices & $|\sigma|$ & $v_\varphi(\sigma)$ \\
    \hline \hline
    $n = kd + r$ & $\tau_q$ & $n-2(k-q)+1$ & $q+1$ & $0\leq q\leq k-1$\\
    $ \not\equiv -1 \pmod d$ & $\tau_q'$ & $n-2(k-q)$ & $q$ & $0\leq q \leq k$\\
    \hline
    \multirow{3}{*}{$n = kd - 1$} & $\sigma_{q,r}$ & $n-2(k-q)+2$ & $q+1$ & $0\leq q \leq k-2$, $0\leq r\leq d-2$ \\
    & $\sigma_{q,r}'$ & $n-2(k-q)+1$ & $q$ & $0\leq q \leq k-1$, $0\leq r\leq d-2$ \\
    & $\bar\sigma = 1^n0$ & $n$ & $k$ \\
    \hline
  \end{tabular}
  
  \vskip0.2cm
  
  \begin{tabular}{c}
    Simplices \\
    \hline \hline
    $\tau_q = 1^{qd+r}01^{d-1}0(1^r01^{d-2-r}0)^{k-q-1}$ \\
    $\tau_q' = 1^{qd+r}0(1^r01^{d-2-r}0)^{k-q}$ \\
    \hline
    $\sigma_{q,r} = 1^{qd+r}01^{d-1}0(1^{d-2-r}01^r0)^{k-q-2}1^{d-2-r}0$ \\
    $\sigma_{q,r}' = 1^{qd+r}0(1^{d-2-r}01^r0)^{k-q-1}1^{d-2-r}0$ \\
    \hline
  \end{tabular}
  }
  
\vskip0.3cm
  \caption{Description of the critical simplices for $\tilde A_n$. Below, the binary string notation is used.}
  \label{table:tAn-critical}
\end{table}

\begin{theorem}[Critical simplices in case $\tilde A_n$]
  The critical simplices for the matching on $K$ are those listed in Table \ref{table:tAn-critical}.
  The only non-trivial incidence numbers between critical cells are:
  \begin{align*}
   [\tau_q: \tau_q']^\M & = \pm 1 \quad \text{(for $n=kd+r$ with $0 \leq r \leq d-2$)};\\
   [\sigma_{q,r}: \sigma_{q,r}']^\M & = \pm 1, \\
   [\bar\sigma : \sigma_{k-1,r}']^\M & = \pm 1 \quad \text{(for $n=kd-1$)}.
  \end{align*}
  In particular the matching is precise.
  \label{thm:matching-tAn}
\end{theorem}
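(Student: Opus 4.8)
The plan is to bootstrap from the finite-type analysis of Section~\ref{sec:precise-matching-An}, working one fibre $K_h$ at a time. The key structural input is Lemma~\ref{lemma:tAn-acyclic}: the map $\eta\colon K\to(\N,\leq)$, $\eta(\sigma)=\min\{h:h\notin\sigma\}$, is a poset map with fibres $K_h$ and is compatible with the matching $\M$. Consequently $\eta$ is weakly decreasing along any alternating path (going down a cover can only decrease $\eta$, and a matched pair lies in one fibre), so an alternating path joining two critical simplices either stays in a single $K_h$ or leaves its starting fibre at the first step and never returns. Inside $K_h\cong K^A_{m,r}$ (with $h=qd+r$, $m=n-qd$) the matching is, by construction, the pull-back of the matching of Section~\ref{sec:precise-matching-An}, and the identification shifts weights by a constant: $v_\ph(\sigma)=v_\ph(\hat\sigma)+q$ when $\hat\sigma$ is the image of $\sigma$. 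Hence Theorem~\ref{thm:matching-An} gives, fibrewise, both the list of critical simplices and the fact that between the (at most two) critical simplices of $K_h$ there is a unique alternating path, the trivial one.

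First I would identify the critical simplices. Since $m=n-qd\equiv n\pmod d$, Theorem~\ref{thm:matching-An} pins down, in terms of $n\bmod d$, exactly which fibres contribute: writing $n=kd+s$ with $0\le s\le d-1$, if $s\neq d-1$ only the fibres $K_{qd+s}$ ($0\le q\le k$) carry critical simplices, while if $s=d-1$, i.e.\ $n\equiv-1\pmod d$, every admissible fibre does, together with the degenerate fibre $K_n$. Transporting the bit-string descriptions of Table~\ref{table:An-critical} back through the clockwise relabelling — the first $qd+r$ vertices are always present, vertex $h$ is absent, and $h+1,\dots,n$ correspond to the free vertices of $K^A_{m,r}$ read in reverse — and adding $q$ to every weight should recover the simplices of Table~\ref{table:tAn-critical}. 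This is a bookkeeping step.

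Next, the incidence numbers. Intra-fibre edges are immediate: an alternating path between the two critical simplices of $K_h$ is the trivial one, so $[\tau_q:\tau_q']^\M=\pm1$ (resp.\ $[\sigma_{q,r}:\sigma_{q,r}']^\M=\pm1$), with weight dropping by exactly one. For $\bar\sigma$ I would note that each codimension-one face $\bar\sigma\setminus\{i\}$ has $\eta=i<n$, and a short computation (running the matching recursion of Section~\ref{sec:precise-matching-An} for the fibre $K_i$) shows such a face is critical exactly when $i=(k-1)d+r$ with $0\le r\le d-2$, in which case it equals $\sigma_{k-1,r}'$; these give trivial paths contributing $\pm1$ to $[\bar\sigma:\sigma_{k-1,r}']^\M$. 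What remains — and this is where the real work lies — is to show that no other alternating path contributes, i.e.\ that the remaining (matched) faces of $\bar\sigma$ do not spawn alternating paths reaching any $\sigma_{k-1,r}'$, and more generally that there is no alternating path connecting critical simplices in distinct fibres beyond the ones already found. I expect this to be the main obstacle, for two reasons: dimension/weight counting alone does not exclude some cross-fibre edges (an edge $\sigma_{q,r}'\to\sigma_{q-1,r'}$ would even be weight-preserving, contradicting preciseness), and cross-fibre contributions need not vanish individually — they may cancel in pairs, as already happens in small examples.

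To settle this I would combine two ingredients. First, the filtration $K^{\le h}=\bigsqcup_{h'\le h}K_{h'}$ is by subcomplexes compatible with $\M$, so it makes the Morse complex ``block upper triangular'': its associated graded is the direct sum of the Morse complexes of the individual fibres, each of which is either acyclic (the two-term complex $\K\xrightarrow{\pm1}\K$ for a two-critical fibre) or a single copy of $\K$ (for $K_n$); since the total homology is $H_*(C^0_*)\cong\tilde H_{*-1}(S^{n-1})$, the spectral sequence of this filtration collapses, which constrains the cross-fibre differentials. Second, a ``the first deleted vertex is never recovered'' argument in the spirit of the uniqueness statements of Theorems~\ref{thm:matching-An} and~\ref{thm:matching-Bn}: Remark~\ref{rmk:removed-vertex}, transported through the relabelling, restricts the vertex by which a matched pair in $K_h$ can differ (in $K^A_{m,r}$-coordinates it is $\equiv r+1$ or $\equiv0\pmod d$), and since every critical simplex contains all the ``forbidden'' vertices, an alternating path that ever deletes one of them cannot return to a critical simplex. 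Refining $\eta$ if needed so as to also record the length of the block of present vertices immediately after position $h$, this should rule out all spurious paths, establish the incidence list, and hence show that $|\sigma|-v_\ph(\sigma)$ is constant on every connected component of $G^\M$ — that is, that the matching is $\ph$-precise.
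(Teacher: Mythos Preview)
Your setup is solid: the fibration by $\eta$, the identification of critical simplices via Theorem~\ref{thm:matching-An}, and the intra-fibre incidences $[\tau_q:\tau_q']^\M=\pm1$, $[\sigma_{q,r}:\sigma_{q,r}']^\M=\pm1$ are all handled correctly and match the paper's argument. The gap is exactly where you flag it---the cross-fibre incidences---and neither of your two proposed ingredients closes it.

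The spectral-sequence argument only yields homological constraints. Since each two-cell fibre is already acyclic, $E^1$ is a single copy of $\K$ in degree~$n$ and the sequence collapses trivially; this tells you nothing about the individual numbers $[\tau_{q+1}':\tau_q]^\M$ or $[\sigma_{q+1,r}':\sigma_{q,r'}]^\M$. (One can extract $[\tau_{q+1}':\tau_q]^\M=0$ from $\partial^2=0$ together with $[\tau_q:\tau_q']^\M=\pm1$, since the Morse complex has a single generator per degree in the range; but this shortcut does not survive to the $n\equiv-1$ case, where there are $d-1$ generators per degree and one also needs $[\sigma_{q,r_1}:\sigma_{q,r_2}']^\M=0$ for $r_1\neq r_2$.)

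The ``never recovered'' argument is simply false here, and you essentially say so yourself two sentences earlier: there \emph{are} nontrivial cross-fibre alternating paths. What the paper does is enumerate them by working \emph{backwards} from the target. Given $\rho_m=\tau_q\cup\{v\}$ (or $\sigma_{q,r}\cup\{v\}$), most choices of $v$ create a connected component of size $\equiv -1\pmod d$, forcing $\rho_m$ to be matched upward and killing the path; the surviving choices are analysed one by one. For $[\tau_{q+1}':\tau_q]^\M$ this leaves exactly two paths---the trivial one (remove vertex $qd+r$) and one nontrivial path of length $2(k-q)$ that winds once around the cycle---and an explicit sign computation shows their contributions are $(-1)^h$ and $(-1)^{h+1}$, summing to zero. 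The same mechanism handles $[\sigma_{q+1,r}':\sigma_{q,r}]^\M$, while $[\sigma_{q,r_1}:\sigma_{q,r_2}']^\M=0$ for $r_1\neq r_2$ follows because \emph{every} $\rho_m=\sigma_{q,r_2}'\cup\{v\}$ (except $v=(q+1)d-1$, giving $\sigma_{q,r_2}$) is matched upward. This explicit path enumeration and sign cancellation is the missing step.
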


\begin{proof}
  \textbf{Part 1: critical simplices.}
  For $n\not\equiv -1 \pmod d$, the matching on $K_h$ has critical simplices only for $h\equiv n \pmod d$. If we write $n=kd+r$ with $0\leq r \leq d-2$, then $K_{qd+r} \cong K^A_{n-qd,\, r}$ has two critical simplices for $0\leq q \leq k-1$ and one critical simplex for $q=k$.
  These simplices are the ones listed in Table \ref{table:tAn-critical}.
  
  Suppose now that $n = kd - 1$. For any $h=qd+r$, with $0\leq r \leq d-2$, the complex $\smash{K_{qd+r} \cong K^A_{n-qd,\, r}}$ has two critical simplices because $n-qd \equiv -1 \pmod d$. Moreover, for $h=n$ the complex $\smash{K_n \cong K^A_{d-1,\,d-1}}$ has one critical simplex.
  In the remaining cases the matching on $K_h$ has no critical simplices.
  Again, the critical simplices are those listed in Table \ref{table:tAn-critical}.
  
  \textbf{Part 2: incidence numbers for $n=kd+r$.}
  We want to find the incidence numbers between critical simplices of consecutive dimensions. We start with the case $n=kd+r$, with $0\leq r \leq d-2$. 
  First, let us look for alternating paths from $\tau_q$ to $\tau_q'$ (for $0\leq q \leq k-1$). Set $h=qd+r$. Suppose we have one such path:
  \[ \tau_q \rhd \zeta_1 \lhd \rho_1\rhd \zeta_2\lhd \rho_2\rhd  \dots \rhd \zeta_m\lhd \rho_m \rhd \tau_q'. \]
  If at some point a vertex $v \in \{0, ..., h-1\}$ is removed, then the path falls into some $K_{h'}$ with $h' < h$ and can never return in $K_h$. Therefore the path must be entirely contained in $\smash{K_h \cong K^A_{n-qd,\,r}}$, and by Theorem \ref{thm:matching-An} it must be the trivial path $\tau_q \rhd \tau_q'$. Thus $[\tau_q: \tau_q'] = \pm 1$.
  
  The other pairs of critical simplices in consecutive dimensions are $(\tau_{q+1}', \tau_q)$ for $0\leq q \leq k-1$.
  There is a trivial path $\tau_{q+1}' \rhd \tau_q$ which consists in removing the vertex $h=qd+r$, and contributes to the incidence number by
  \[ [ \tau_{q+1}' : \tau_q ] = (-1)^h. \]
  Suppose we have some other (non-trivial) alternating path:
  \begin{equation}
    \tau_{q+1}' \rhd \zeta_1 \lhd \rho_1\rhd \zeta_2\lhd \rho_2\rhd  \dots \rhd \zeta_m\lhd \rho_m \rhd \tau_q, \quad m\geq 1.
    \label{eq:alternating-path-tAn}
  \end{equation}
  Let $\rho_m = \tau_q \cup \{v\}$. If $v=h$ then $\rho_m = \tau_{q+1}'$, which is excluded. Then $v$ must be one of the other vertices that do not belong to $\tau_q$. They are of the form $v=n-sd$ (for $0\leq s \leq k-q-1$) or $v=n-(d-1-r)-sd$ (for $0\leq s \leq k-q-2$).
  If $v=n-(d-1-r)-sd$, then $\rho_m$ is matched with $\rho_m \cup \{ n-(s+1)d \}$. This is impossible because $\rho_m$ must be matched with some $\zeta_m \lhd \rho_m$.
  Similarly, if $v=n-sd$ with $0\leq s \leq k-q-2$, then $\rho_m$ is matched with $\rho_m \cup \{ n-(d-1-r)-sd \}$ and not with some $\zeta_m \lhd \rho_m$.
  The only remaining possibility is $v=n-sd$ with $s=k-q-1$, i.e.\ $v=n-(k-q-1)d = (q+1)d+r$.
  In this case $\rho_m$ is matched with $\zeta_m = \rho_m \setminus \{ qd+2r+1 \}$.
  Going on with the same argument, we have exactly one way to continue the alternating path (from right to left in \eqref{eq:alternating-path-tAn}), and we eventually end up with $\tau_{q+1}'$.
  From left to right, the obtained alternating path is the following:
  \begin{align*}
    \zeta_1 &= \tau_{q+1}' \setminus\{ qd+2r+1 \}, \\
    \rho_1 &= \zeta_1 \cup \{ n \}, \\
    \zeta_2 &= \rho_1 \setminus\{ qd+r \}, \\
    \rho_2 &= \zeta_2 \cup \{ n- (d-1-r) \}, \\
    \zeta_3 &= \rho_2 \setminus \{ n \}, \\
    \rho_3 &= \zeta_3 \cup \{ n-d \}, \\
    \zeta_4 &= \rho_3 \setminus \{ n - (d-1-r) \}, \\
    &\vdots \\
    \zeta_m &= \rho_{m-1} \setminus \{ (q+1)d + 2r + 1 \}, \\
    \rho_m &= \zeta_m \cup \{ qd+2r+1 \}, \\
    \tau_q &= \rho_m \setminus \{ (q+1)d + r \}.
  \end{align*}
  The length of the path is $m=2(k-q)$.
  Apart from $qd+r$, which is the vertex in $\tau_{q+1}' \setminus \tau_q$, the other vertices are added and removed exactly once during the path. If a certain $v$ is added in some $\rho_i$ and removed in some $\zeta_j$ (the removal might possibly come before the addition), then
  \[ [\rho_i:\zeta_i][\rho_{j-1}: \zeta_j] =
    \begin{cases}
      1 & \text{if $v=n$}, \\
      -1 & \text{otherwise}.
    \end{cases}
  \]
  This is true because, except for $v=n$, between the addition and the removal of $v$ exactly one vertex $u$ with $u<v$ has been added/removed.
  Namely, between the addition and the removal of a vertex $v=jd+2r+1$ (for $q \leq j \leq k-1$) the vertex $u=jd+r$ is added/removed, and between the addition of the removal of a vertex $v=jd+r$ (for $q+1 \leq j \leq k-1$) the vertex $u=(j-1)d+2r+1$ is added.
Therefore the alternating path \eqref{eq:alternating-path-tAn} contributes to the incidence number by
  \[ (-1)^m \cdot (-1)^{m-1} \cdot [\zeta_2:\rho_1] = (-1)^{qd+r+1} = (-1)^{h+1}. \]
  Finally, the incidence number is given by
  \[ [ \tau_{q+1}' : \tau_q ]^\M = (-1)^h + (-1)^{h+1} = 0. \]
  
  \textbf{Part 3: incidence numbers for $n=kd-1$.}
Consider a generic alternating path starting from $\bar\sigma = \{0,\dots, n-1\}$:
  \[ \bar\sigma \rhd \zeta_1 \lhd \rho_1\rhd \zeta_2\lhd \rho_2\rhd  \dots \rhd \zeta_m\lhd \rho_m \rhd \zeta_{m+1}. \]
  Let $\bar\sigma = \zeta_1 \cup \{v \}$.
  If $n-d+1 \leq v \leq n-1$ then $\zeta_1 = \sigma_{k-1,r}'$ for $r=n-1-v$. Therefore we have trivial alternating paths from $\bar\sigma$ to any of the $\sigma_{k-1,r}'$.
  If $v \leq n-d$ then $\zeta_1 \in K_h$ with $h \leq n-d = (k-1)d - 1$.
  None of these $K_h$'s contains critical simplices $\zeta_{m+1}$ with $|\zeta_{m+1}| = n-1$, and the alternating path cannot return in any $K_{h'}$ with $h' \geq (k-1)d$.
  Thus there are no other alternating paths from $\bar\sigma$ to critical simplices of $K$.
  Then the non-trivial incidence numbers involving $\bar\sigma$ are:
  \[ [\bar\sigma : \sigma_{k-1,r}']^\M  = \pm 1. \]
  
  Consider now a generic alternating path from $\sigma_{q,r_1}$ to $\sigma_{q,r_2}'$:
  \[ \sigma_{q,r_1} \rhd \zeta_1 \lhd \rho_1\rhd \zeta_2\lhd \rho_2\rhd  \dots \rhd \zeta_m\lhd \rho_m \rhd \sigma_{q,r_2}'. \]
  Let $\rho_m = \sigma_{q,r_2}' \cup \{v\}$.
  Adding $v$ to $\sigma_{q,r_2}'$ causes the creation of a connected component $\Gamma_i$ with $|\Gamma_i|\equiv -1 \pmod d$.
  This means that $\rho_m$ is matched with a simplex of higher dimension, or is not matched at all (this happens for $v=(q+1)d-1$).
  Therefore the alternating path must be trivial, and it occurs only for $r_1 = r_2$.
  Then the non-trivial incidence numbers of the form $[\sigma_{q,r_1} : \sigma_{q,r_2}']^\M$ are:
  \[ [\sigma_{q,r} : \sigma_{q,r}']^\M = \pm 1. \]
  
  Finally consider a generic alternating paths from $\sigma_{q+1,r_1}'$ to $\sigma_{q,r_2}$:
  \begin{equation}
    \sigma_{q+1,r_1}' \rhd \zeta_1 \lhd \rho_1\rhd \zeta_2\lhd \rho_2\rhd  \dots \rhd \zeta_m\lhd \rho_m \rhd \sigma_{q,r_2}.
    \label{eq:alternating-path-tAn2}
  \end{equation}
  As before, we work backwards.
  Let $\rho_m = \sigma_{q,r_2} \cup \{v\}$.
  Apart from the choices $v=qd+r$ and $v=(q+1)d+r$, in all other cases $\rho_m$ has a connected component of size $\equiv -1 \pmod d$ and this prevents the continuation of the alternating path.
  For $v=qd+r$ we obtain $\rho_m = \sigma_{q+1,r_2}'$, so we have a trivial alternating path from $\sigma_{q+1,r_2}'$ to $\sigma_{q,r_2}$.
  For $v=(q+1)d+r$, iterating the same argument, we have exactly one way to continue the alternating path (from right to left in \eqref{eq:alternating-path-tAn2}) and we end up with $\sigma_{q+1,r_2}'$. From left to right, the path is as follows (set $r=r_2$):
  \begin{align*}
    \zeta_1 &= \sigma_{q+1,r}' \setminus\{ (q+1)d - 1 \}, \\
    \rho_1 &= \zeta_1 \cup \{ n \}, \\
    \zeta_2 &= \rho_1 \setminus\{ qd+r \}, \\
    \rho_2 &= \zeta_2 \cup \{ n-(d-1-r) \}, \\
    \zeta_3 &= \rho_2 \setminus \{ n \}, \\
    \rho_3 &= \zeta_3 \cup \{ n-d \}, \\
    \zeta_4 &= \rho_3 \setminus \{ n-(d-1-r) \}, \\
    &\vdots \\
    \zeta_m &= \rho_{m-1} \setminus \{ (q+2)d - 1 \}, \\
    \rho_m &= \zeta_m \cup \{ (q+1)d - 1 \}, \\
    \sigma_{q,r} &= \rho_m \setminus \{ (q+1)d+r \}.
  \end{align*}
  The length of the path is $m = 2(k-q)-1$.
  As happened in the case $n = kd+r$, the contribution of this alternating path to the incidence number is given by
  \[ (-1)^m \cdot (-1)^{m-1}\cdot [\zeta_2:\rho_1] = (-1)^{qd+r+1}. \]
  The contribution of the trivial path $\sigma_{q+1,r}' \rhd \sigma_{q,r}$ is given by $(-1)^{qd+r}$.
  Therefore
  \[ [\sigma_{q+1,r}' : \sigma_{q,r}]^\M = (-1)^{qd+r} + (-1)^{qd+r+1} = 0. \qedhere \]
\end{proof}

We are now able to recover the result of \cite{callegaro2008cohomology2} about the homology $H_*(\GW;R)$ when $\GW$ is an Artin group of type $\smash{\tilde A_n}$.

\begin{theorem}[Homology in case $\tilde A_n$ \cite{callegaro2008cohomology2}]
  For an Artin group $\GW$ of type $\tilde A_n$, we have
  \[ H_m(\GW;R)_{\varphi_d} \cong
    \begin{cases}
      \left( R/(\varphi_d) \right)^{\oplus d-1} & \text{if $n=kd-1$ and $m=n-2i+1$ ($1\leq i \leq k$)}, \\
      R/(\varphi_d) & \text{if $n=kd+r$ and $m=n-2i$ ($1\leq i \leq k$)}, \\
      0 & \text{otherwise},
    \end{cases}
  \]
  where $0\leq r\leq d-2$ in the second case.
  \label{thm:homology-tAn}
\end{theorem}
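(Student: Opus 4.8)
The plan is to feed the precise matching of Theorem~\ref{thm:matching-tAn} into the affine specialization of Theorem~\ref{thm:homology-artin-groups}. Since $\smash{\tilde A_n}$ is affine with $n+1$ generators, that specialization gives $H_n(\XW;R)\cong R$ and $H_m(\XW;R)\cong\bigoplus_\varphi(R/(\varphi))^{\oplus\rk\delta_{m+1}^\varphi}$ for $m<n$; moreover for $\smash{\tilde A_n}$ the $K(\pi,1)$ conjecture holds, so $H_*(\GW;R)=H_*(\XW;R)$. As $R=\Q[q^{\pm1}]$ and the weights are products of cyclotomic polynomials, it suffices to fix $\varphi=\varphi_d$ and to read off $\rk\delta_j^{\varphi_d}$ for all $j$ from the list of critical simplices (Table~\ref{table:tAn-critical}) together with the incidence numbers supplied by Theorem~\ref{thm:matching-tAn}. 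Here $\delta^{\varphi_d}$ denotes the boundary of the Morse complex of the matching $\M_{\varphi_d}$ computed over $\K=\Q$, whose rank is what enters the formula.

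First I would handle $n=kd+r$ with $0\le r\le d-2$. The critical simplices are the $\tau_q$ (in degree $n-2(k-q)+1$) for $0\le q\le k-1$ and the $\tau_q'$ (in degree $n-2(k-q)$) for $0\le q\le k$, and the only non-vanishing Morse incidence numbers are $[\tau_q:\tau_q']^\M=\pm1$. Since $[\tau_{q+1}':\tau_q]^\M=0$, the incidence graph $G^\M$ has connected components $\{\tau_q,\tau_q'\}$ ($0\le q\le k-1$) together with the isolated vertex $\{\tau_k'\}$; accordingly the $\Q$-Morse complex splits into two-term pieces $\Q\,\bar e_{\tau_q}\xrightarrow{\pm1}\Q\,\bar e_{\tau_q'}$ and the isolated generator $\bar e_{\tau_k'}$ in degree $n$, the latter carrying the free class detecting $H_n(C^0_*)=R$. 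Hence $\rk\delta_j^{\varphi_d}=1$ precisely for $j=|\tau_q|=n-2(k-q)+1$ with $0\le q\le k-1$, and vanishes otherwise. Setting $j=m+1$ in Theorem~\ref{thm:homology-artin-groups} gives $H_m(\GW;R)_{\varphi_d}\cong R/(\varphi_d)$ exactly when $m=n-2i$ with $1\le i\le k$, which is the second case of the statement.

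Next I would handle $n=kd-1$. Now the critical simplices are the $\sigma_{q,r}$ (degree $n-2(k-q)+2$), the $\sigma_{q,r}'$ (degree $n-2(k-q)+1$), with $0\le r\le d-2$, and $\bar\sigma$ (degree $n$), and the only non-vanishing incidence numbers are $[\sigma_{q,r}:\sigma_{q,r}']^\M=\pm1$ and $[\bar\sigma:\sigma_{k-1,r}']^\M=\pm1$ (for each $r$). A parity comparison of the degree formulas shows that each degree meets at most one of the families $\{\sigma_{q,r}\}_r$ or $\{\sigma_{q,r}'\}_r$, with $\bar\sigma$ adjoined to the top family, so the $\Q$-Morse complex again decomposes into blocks. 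For $q$ below the top value the relevant block is a $\pm1$-diagonal map from the span of $\{\sigma_{q,r}\}_r$ to the span of $\{\sigma_{q,r}'\}_r$, of rank $d-1$; the top block is a $(d-1)\times d$ matrix of shape $[\pm I_{d-1}\mid v]$ with $v$ a column of units, again of rank $d-1$; and all differentials out of the $\sigma'$-layers vanish. Therefore $\rk\delta_j^{\varphi_d}=d-1$ exactly for $j\in\{n,n-2,\dots,n-2k+2\}$ and is $0$ otherwise, so Theorem~\ref{thm:homology-artin-groups} yields $H_m(\GW;R)_{\varphi_d}\cong(R/(\varphi_d))^{\oplus d-1}$ exactly when $m=n-2i+1$, $1\le i\le k$. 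Combining the two cases (the $\varphi_d$-component being trivially zero when $k=0$, i.e.\ when $\varphi_d$ divides no weight) gives the theorem.

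The routine-but-delicate part is the bookkeeping behind the block decomposition of the $\Q$-Morse complex when $n=kd-1$: one must confirm, from the exact degrees and weights in Table~\ref{table:tAn-critical}, that no critical simplex of an unexpected degree or weight enters a given block, and that the small incidence matrices genuinely have the stated shape. Once Theorem~\ref{thm:matching-tAn} is granted, this reduces to elementary linear algebra over $\Q$, so I do not anticipate a genuine obstacle beyond care with indices.
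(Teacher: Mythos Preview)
Your approach is the same as the paper's: read off $\rk\delta_{m+1}^{\varphi_d}$ from the critical cells and incidence numbers of Theorem~\ref{thm:matching-tAn}, then plug into the affine specialization of Theorem~\ref{thm:homology-artin-groups}. The case $n=kd+r$ with $0\le r\le d-2$ is handled correctly and matches the paper's brief argument.

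There is, however, a gap in the case $n=kd-1$ at the top degree. You describe the top block of $\delta^{\varphi_d}$ as a $(d-1)\times d$ matrix $[\pm I_{d-1}\mid v]$, but Table~\ref{table:tAn-critical} lists only \emph{one} critical simplex of size $n$, namely $\bar\sigma$ (the $\sigma_{q,r}$ there are indexed by $0\le q\le k-2$, hence have size $\le n-2$). Taken at face value this gives $\delta_n^{\varphi_d}\colon\Q\to\Q^{d-1}$, of rank at most $1$, which for $d\ge 3$ contradicts the required $H_{n-1}\cong(R/(\varphi_d))^{\oplus d-1}$. The resolution is that Table~\ref{table:tAn-critical} is incomplete: for each $r\in\{0,\dots,d-2\}$ the matching on $K_{(k-1)d+r}\cong K^A_{d-1,\,r}$ produces, besides $\sigma_{k-1,r}'$, a further critical simplex $\{0,\dots,n\}\setminus\{(k-1)d+r\}$ of size $n$ and weight $k$ (the ``$q=k-1$'' instance of $\sigma_{q,r}$, for which the string formula in the table degenerates). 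With these $d-1$ extra cells your $(d-1)\times d$ picture becomes correct and $\rk\delta_n^{\varphi_d}=d-1$; but to make the argument rigorous you must supply these simplices and verify their Morse incidences with the $\sigma_{k-1,r}'$, since Theorem~\ref{thm:matching-tAn} as stated does not. The paper's own proof has the same lacuna --- it asserts $\rk\delta_n^{\varphi_d}=d-1$ ``due to $\bar\sigma$'', which by itself cannot account for rank beyond $1$.
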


\begin{proof}
  We apply Theorems \ref{thm:homology-artin-groups} and \ref{thm:matching-tAn}.
  For $n=kd+r$ ($0\leq r\leq d-2$), the boundary map $\delta_{m+1}^\varphi$ has rank $1$ when $m+1=n-2(k-q)+1$ (for $0\leq q \leq k-1$, due to $\tau_q$); it has rank $0$ otherwise.
  For $n=kd-1$, the boundary map $\delta_{m+1}^\varphi$ has rank $d-1$ if $m+1 = n-2(k-q)+2$ (for $0\leq q \leq k-2$, due to the simplices $\sigma_{q,r}$) or if $m+1 = n$ (due to $\bar\sigma$); it has rank $0$ otherwise.
\end{proof}

\subsection{Case $\tilde C_n$}

The last case we consider is that of Artin groups of type $\smash{\tilde C_n}$. The corresponding Coxeter graph $\Gamma$ is shown in Figure \ref{fig:tCn}.
As in the $\smash{\tilde A_n}$ case, the simplicial complex $K$ consists of all the simplices $\sigma \subseteq S = \{0,\dots, n\}$ except for the full simplex $\sigma = \{0,\dots, n\}$.
For any $\sigma \in K$, the subgraph $\Gamma(\sigma)$ of $\Gamma$ splits as a union of connected components of type $B_k$ (those containing the first or the last vertex) and of type $A_k$ (the remaining ones).

\begin{figure}[htbp]
  \begin{tikzpicture}
\begin{scope}[every node/.style={circle,thick,draw,inner sep=2.5}, every label/.style={rectangle,draw=none}]
  \node (0) at (0.0,0) [label={above,minimum height=13}:$0$] {};
  \node (1) at (1.4,0) [label={above,minimum height=13}:$1$] {};
  \node (2) at (2.8,0) [label={above,minimum height=13}:$2$] {};
  \node (3) at (4.2,0) [label={above,minimum height=13}:$3$] {};
  \node (n-1) at (5.6,0) [label={above,minimum height=13}:$n-1$] {};
  \node (n) at (7.0,0) [label={above,minimum height=13}:$n$] {};
\end{scope}
\begin{scope}[every edge/.style={draw=black!60,line width=1.2}]
  \path (0) edge node {} node[inner sep=3, above] {\bf\small 4} (1);
  \path (1) edge node {} (2);
  \path (2) edge node {} (3);
  \path (3) edge node [fill=white, rectangle, inner sep=3.0] {$\ldots$} (n-1);
  \path (n-1) edge node {} node[inner sep=3, above] {\bf\small 4} (n);
\end{scope}
\begin{scope}[every edge/.style={draw=black, line width=3}]
\end{scope}
\begin{scope}[every node/.style={draw,inner sep=11.5,yshift=-4}, every label/.style={rectangle,draw=none,inner sep=6.0}, every fit/.append style=text badly centered]
\end{scope}
\end{tikzpicture}
   \caption{A Coxeter graph of type $\tilde C_n$.} 
  \label{fig:tCn}
\end{figure}

For each $m$ with $1 \leq m \leq n$, let $K^B_m$ be the full simplicial complex on $\{1,\dots,m\}$, endowed with the weight function of $B_m$ (see Section \ref{sec:precise-matching-Bn}).
We are going to construct a precise matching for the case $\tilde C_n$ using the precise matching on $K^B_m$.

For $h \in \{0,\dots, n\}$ set
\[ K_h = \{ \sigma \in K \mid h\not\in \sigma \; \text{and} \; \{0,\dots, h-1\} \subseteq \sigma \}. \]
As in the $\tilde A_n$ case, since every simplex $\sigma\in K$ misses at least one vertex, the subsets $K_h$ form a partition of $K$:
\[ K = \bigsqcup_{h=0}^n K_h. \]
Construct a matching on a fixed $K_h$ as follows.
Ignore the vertices $0$, \dots, $h$ and relabel the remaining ones from right to left: $n$ becomes $1$, $n-1$ becomes $2$, \dots, $h+1$ becomes $n-h$.
This induces a poset isomorphism
\[ K_h \xrightarrow{\cong} K^B_{n-h}. \]
Then equip $K_h$ with the pull-back of the matching on $K^B_{n-h}$ constructed in Section \ref{sec:precise-matching-Bn} (for $h=n$ the single simplex in $K_h$ is critical).

\begin{remark}
  For $d$ odd we are simply matching $\sigma$ with $\sigma \xor n$ for all $\sigma$ except $\sigma = \{0,\dots,n-1\}$.
\end{remark}

\begin{lemma}
  The matching defined above is an acyclic weighted matching on $K$.
\end{lemma}

\begin{proof}
  As in the proof of Lemma \ref{lemma:tAn-acyclic-weighted}, the subsets $K_h$ are the fibers of a poset map and the matching is acyclic on each $K_h \cong K^B_{n-h}$.

  If a simplex $\sigma\in K_h$ is sent to $\hat\sigma$ by the isomorphism $K_h \xrightarrow{\cong} K^B_{n-h}$, then $v_\varphi(\sigma) = v_\varphi(\hat\sigma) + \omega_\varphi(B_h)$.
  Since the matching on $\smash{K^B_{n-h}}$ is weighted, the matching on $K_h$ also is.
\end{proof}

\begin{table}[phtb]
  {\renewcommand{\arraystretch}{1.4}\begin{tabular}{c|c|c|c|c}
    Case & Simplices & $|\sigma|$ & $v_\varphi(\sigma)$ \\
    \hline \hline
    $d$ odd & $\bar\sigma = 1^n0$ & $n$ & $1$ or $0$ \\
    \hline
    \multirow{2}{*}{$d$ even} & $\sigma_{q_1,q_2}$ & $n-k+q_1+q_2+1$ & $q_1+q_2+1$ & $0\leq q_1+q_2\leq k-2$\\
    & $\sigma_{q_1,q_2}'$ & $n-k+q_1+q_2$ & $q_1+q_2$ & $0\leq q_1+q_2\leq k$\\
    \hline
  \end{tabular}
  
  \vskip0.2cm
  
  \begin{tabular}{c}
    Simplices \\
    \hline \hline
    $\sigma_{q_1,q_2} = 1^{q_1\frac d2 + r}01^{d-1}0(1^{\frac d2-1}0)^{k-q_1-q_2-2}1^{q_2\frac d2}$ \\
    $\sigma_{q_1,q_2}' = 1^{q_1\frac d2 + r}0(1^{\frac d2-1}0)^{k-q_1-q_2}1^{q_2\frac d2}$ \\
    \hline
  \end{tabular}
  }
  
  \vskip0.3cm
  \caption{Description of the critical simplices for $\tilde C_n$. When $d$ is even, set $n=k\frac d2 + r$.}
  \label{table:tCn-critical}
\end{table}

\begin{figure}[phtb]
  \begin{tikzpicture}
\begin{scope}[every node/.style={circle,thick,draw,inner sep=2.5}, every label/.style={rectangle,draw=none}]
  \node (0) at (0.0,0) [label={above,minimum height=13}:$ $,fill=black!50] {};
  \node (1) at (0.8,0) [label={above,minimum height=13}:$ $,fill=black!50] {};
  \node (2) at (1.6,0) [label={above,minimum height=13}:$ $,fill=black!50] {};
  \node (3) at (2.5,0) [label={above,minimum height=13}:$ $,fill=black!50] {};
  \node (4) at (3.3,0) [label={above,minimum height=13}:$ $] {};
  \node (5) at (4.1,0) [label={above,minimum height=13}:$ $,fill=black!50] {};
  \node (6) at (4.9,0) [label={above,minimum height=13}:$ $,fill=black!50] [label={below}:$d-1$\;vertices] {};
  \node (7) at (5.7,0) [label={above,minimum height=13}:$ $,fill=black!50] {};
  \node (8) at (6.6,0) [label={above,minimum height=13}:$ $] {};
  \node (9) at (7.4,0) [label={above,minimum height=13}:$ $,fill=black!50] {};
  \node (10) at (8.2,0) [label={above,minimum height=13}:$ $,fill=black!50] [label={below}:$\frac{d}{2}-1$\;vertices] {};
  \node (11) at (9.0,0) [label={above,minimum height=13}:$ $,fill=black!50] {};
  \node (12) at (9.8,0) [label={above,minimum height=13}:$ $] {};
  \node (13) at (10.7,0) [label={above,minimum height=13}:$ $,fill=black!50] {};
  \node (14) at (11.5,0) [label={above,minimum height=13}:$ $,fill=black!50] [label={below}:$q_2\frac{d}{2}$\;vertices] {};
  \node (15) at (12.3,0) [label={above,minimum height=13}:$ $,fill=black!50] {};
\end{scope}
\begin{scope}[every edge/.style={draw=black!60,line width=1.2}]
  \path (3) edge node {} (4);
  \path (4) edge node [fill=white, rectangle, inner sep=1.0] {$\ldots$} (5);
  \path (7) edge node {} (8);
  \path (8) edge node [fill=white, rectangle, inner sep=1.0] {$\ldots$} (9);
  \path (11) edge node {} (12);
  \path (12) edge node [fill=white, rectangle, inner sep=1.0] {$\ldots$} (13);
\end{scope}
\begin{scope}[every edge/.style={draw=black, line width=3}]
  \path (0) edge node {} node[inner sep=3, above] {\bf\small 4} (1);
  \path (1) edge node {} node [below=2] {$q_1\frac{d}{2}+r$\;vertices} (2);
  \path (2) edge node [fill=white, rectangle, inner sep=1.0, minimum height = 0.5cm] {$\ldots$} (3);
  \path (5) edge node {} (6);
  \path (6) edge node [fill=white, rectangle, inner sep=1.0, minimum height = 0.5cm] {$\ldots$} (7);
  \path (9) edge node {} (10);
  \path (10) edge node [fill=white, rectangle, inner sep=1.0, minimum height = 0.5cm] {$\ldots$} (11);
  \path (13) edge node [fill=white, rectangle, inner sep=1.0, minimum height = 0.5cm] {$\ldots$} (14);
  \path (14) edge node {} node[inner sep=3, above] {\bf\small 4} (15);
\end{scope}
\begin{scope}[every node/.style={draw,inner sep=11.5,yshift=-4}, every label/.style={rectangle,draw=none,inner sep=6.0}, every fit/.append style=text badly centered]
  \node [fit=(9) (12), label={above}:$k-q_1-q_2-2$\;times] {};
\end{scope}
\end{tikzpicture}
   \begin{tikzpicture}
\begin{scope}[every node/.style={circle,thick,draw,inner sep=2.5}, every label/.style={rectangle,draw=none}]
  \node (0) at (0.0,0) [label={above,minimum height=13}:$ $,fill=black!50] {};
  \node (1) at (0.9,0) [label={above,minimum height=13}:$ $,fill=black!50] {};
  \node (2) at (1.8,0) [label={above,minimum height=13}:$ $,fill=black!50] {};
  \node (3) at (2.7,0) [label={above,minimum height=13}:$ $,fill=black!50] {};
  \node (4) at (3.6,0) [label={above,minimum height=13}:$ $] {};
  \node (5) at (4.5,0) [label={above,minimum height=13}:$ $,fill=black!50] {};
  \node (6) at (5.4,0) [label={above,minimum height=13}:$ $,fill=black!50] [label={below}:$\frac{d}{2}-1$\;vertices] {};
  \node (7) at (6.3,0) [label={above,minimum height=13}:$ $,fill=black!50] {};
  \node (8) at (7.2,0) [label={above,minimum height=13}:$ $] {};
  \node (9) at (8.1,0) [label={above,minimum height=13}:$ $,fill=black!50] {};
  \node (10) at (9.0,0) [label={above,minimum height=13}:$ $,fill=black!50] {};
  \node (11) at (9.9,0) [label={above,minimum height=13}:$ $,fill=black!50] {};
  \node (12) at (10.8,0) [label={above,minimum height=13}:$ $,fill=black!50] {};
\end{scope}
\begin{scope}[every edge/.style={draw=black!60,line width=1.2}]
  \path (3) edge node {} (4);
  \path (4) edge node [fill=white, rectangle, inner sep=1.0] {$\ldots$} (5);
  \path (7) edge node {} (8);
  \path (8) edge node [fill=white, rectangle, inner sep=1.0] {$\ldots$} (9);
\end{scope}
\begin{scope}[every edge/.style={draw=black, line width=3}]
  \path (0) edge node {} node[inner sep=3, above] {\bf\small 4} (1);
  \path (1) edge node {} node [below=2] {$q_1\frac{d}{2}+r$\;vertices} (2);
  \path (2) edge node [fill=white, rectangle, inner sep=1.0, minimum height = 0.5cm] {$\ldots$} (3);
  \path (5) edge node {} (6);
  \path (6) edge node [fill=white, rectangle, inner sep=1.0, minimum height = 0.5cm] {$\ldots$} (7);
  \path (9) edge node {} (10);
  \path (10) edge node [fill=white, rectangle, inner sep=1.0, minimum height = 0.5cm] {$\ldots$} node [below=2] {$q_2\frac{d}{2}$\;vertices} (11);
  \path (11) edge node {} node[inner sep=3, above] {\bf\small 4} (12);
\end{scope}
\begin{scope}[every node/.style={draw,inner sep=11.5,yshift=-4}, every label/.style={rectangle,draw=none,inner sep=6.0}, every fit/.append style=text badly centered]
  \node [fit=(5) (8), label={above}:$k-q_1-q_2$\;times] {};
\end{scope}
\end{tikzpicture}
   \caption{Critical simplices for $\tilde C_n$, with $d$ even and $n=k\frac d2+r$. The diagram for the simplex $\sigma_{q_1,q_2}$ is at the top and the diagram for the simplex $\sigma_{q_1,q_2}'$ is below it.}
  \label{fig:tCn-critical}
\end{figure}
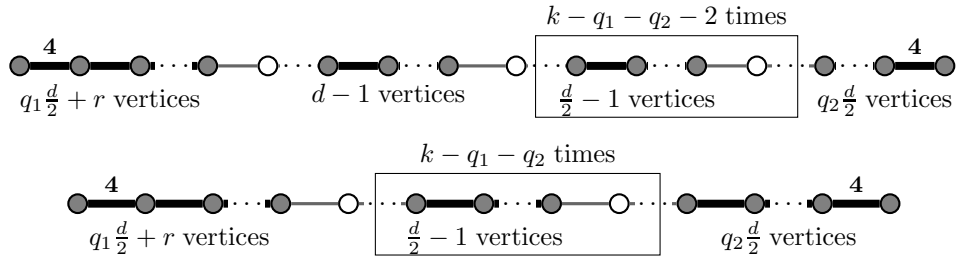

\begin{table}[phtb]
  {\renewcommand{\arraystretch}{1.4}\begin{tabular}{c|c}
    \multicolumn{2}{c}{Simplices} \\
    \hline \hline
    $\sigma_{l-1,0}$ & $\sigma_{l,0}'$ \\
    $\sigma_{l-2,1}$ & $\sigma_{l-1,1}'$ \\
    \vdots & \vdots \\
    $\sigma_{1,l-2}$ & $\sigma_{2,l-2}'$ \\
    $\sigma_{0,l-1}$ & $\sigma_{1,l-1}'$ \\
    & $\sigma_{0,l}'$ \\
    \hline
  \end{tabular}
  }
  \vskip0.3cm
  \caption{Critical simplices for $\tilde C_n$ in dimension $m=n-k+l$ ($0\leq l\leq k$), where $d$ is even and $n=k\frac d2+r$.
  For $l=k$ only the second column occurs.}
  \label{table:tCn-critical-by-dimension}
\end{table}

\begin{theorem}[Critical simplices in case $\tilde C_n$]
  \begin{samepage}
  The critical simplices for the matching on $K$ are those listed in Table \ref{table:tCn-critical}.
  In particular the matching is precise.
  In addition the only non-trivial incidence numbers between critical simplices in the Morse complex are as follows (for $d$ even and $n=k\frac d2 + r$):
  \begin{align*}
    [\sigma_{q_1,q_2}: \sigma_{q_1,\,q_2-1}]^\M & = (-1)^\alpha, \\
    [\sigma_{q_1,q_2}: \sigma_{q_1-1,\,q_2}]^\M & = (-1)^{\beta + 1}, \\
    [\sigma_{q_1,q_2}: \sigma_{q_1,q_2}']^\M & = (-1)^{\beta + \frac d2 + 1}, \\
    [\sigma_{q_1,q_2}': \sigma_{q_1,\,q_2-2}]^\M &= (-1)^{\beta + 1}, \\
    [\sigma_{q_1,q_2}': \sigma_{q_1-2,\, q_2}]^\M &= (-1)^{\beta}, \\
    [\sigma_{q_1,q_2}': \sigma_{q_1,\,q_2-1}']^\M & = (-1)^{\alpha+1}, \\
    [\sigma_{q_1,q_2}': \sigma_{q_1-1,\,q_2}']^\M &= (-1)^{\beta + \frac d2},
  \end{align*}
  where $\alpha = (k-q_2)\big(\frac d2 - 1\big) + q_1 + r + \frac d2$ and $\beta = q_1\frac d2 + r$.
  \end{samepage}
  \label{thm:matching-tCn}
\end{theorem}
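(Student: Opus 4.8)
The plan is to follow closely the pattern of the proofs of Theorems~\ref{thm:matching-Bn} and~\ref{thm:matching-tAn}, exploiting that the matching on $K$ is assembled from the $B_n$-type matchings on the fibers $K_h$ of the poset map $\eta\colon K\to(\N,\leq)$ sending $\sigma$ to $\min\{h\mid h\notin\sigma\}$. First I would record that $\eta^{-1}(h)=K_h$, that $\eta$ is compatible with the matching, and that the relabeling gives $K_h\cong K^B_{n-h}$; hence a cell of $K$ is critical if and only if its image in $K^B_{n-h}$ is critical, so Theorem~\ref{thm:matching-Bn} describes all critical simplices. For $d$ odd every $K^B_{n-h}$-matching is perfect, so the only critical cell of $K$ is the unique cell of $K_n$, namely $\bar\sigma=1^n0$, and there is nothing more to prove. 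For $d$ even, $K^B_{n-h}$ has critical simplices exactly when $n-h\equiv0\pmod{\tfrac d2}$, i.e.\ $h=q_1\tfrac d2+r$; inserting the two families of Table~\ref{table:Bn-critical} for $K^B_{(k-q_1)\frac d2}$ through the isomorphism and prepending the block $1^{q_1\frac d2+r}0$ yields precisely $\sigma_{q_1,q_2}$ and $\sigma_{q_1,q_2}'$ of Table~\ref{table:tCn-critical}. A direct read-off from the table gives $|\sigma|-v_\varphi(\sigma)=n-k$ for every critical simplex; since the edges of $G^\M$ join only critical simplices and always drop dimension by one, this constancy immediately yields $v_\varphi(\sigma)=v_\varphi(\tau)+1$ for every edge $\sigma\to\tau$, i.e.\ preciseness in the sense of Definition~\ref{def:precise-matching}.

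The technical core is a monotonicity lemma: along any alternating path the value of $\eta$ is non-increasing. Indeed, inside a block $K_h$ every matched pair differs by a vertex $>h$ (all cells of $K_h$ contain $\{0,\dots,h-1\}$ and miss $h$), so a downward step either keeps $\eta$ fixed or removes a vertex $<h$ and drops to a strictly smaller block, whence it can never rise again; a downward step can never remove $h$ itself. Consequently an alternating path between two critical simplices lying in the same block $K_h$ stays inside $K_h$. This covers the four pairs with a common first index, $(\sigma_{q_1,q_2},\sigma_{q_1,q_2-1})$, $(\sigma_{q_1,q_2},\sigma_{q_1,q_2}')$, $(\sigma_{q_1,q_2}',\sigma_{q_1,q_2-2})$ and $(\sigma_{q_1,q_2}',\sigma_{q_1,q_2-1}')$: each incidence number equals the corresponding one of Theorem~\ref{thm:matching-Bn} (with $k$ replaced by $k-q_1$ and $q$ by $q_2$), multiplied by the sign correction produced by the order-reversing relabeling and the prefix $1^{q_1\frac d2+r}0$. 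A short computation identifies this correction with $(-1)^{h+|\hat\sigma|-1}$, where $\hat\sigma$ is the image in $K^B_{n-h}$ of the higher-dimensional member of the pair (the two appearances of each intermediate $\sigma_i$ in the path product cancel, leaving only the contribution of the top simplex); substituting the $B_n$ formulas and reducing modulo $2$ produces exactly $(-1)^\alpha$, $(-1)^{\beta+\frac d2+1}$, $(-1)^{\beta+1}$ and $(-1)^{\alpha+1}$.

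The remaining pairs $(\sigma_{q_1,q_2},\sigma_{q_1-1,q_2})$, $(\sigma_{q_1,q_2}',\sigma_{q_1-1,q_2}')$ and $(\sigma_{q_1,q_2}',\sigma_{q_1-2,q_2})$ link critical simplices in different blocks, and here I would argue directly, in the spirit of the $(\sigma_q',\sigma_{q-2})$ computation of Theorem~\ref{thm:matching-Bn} and the long-path computations of Theorem~\ref{thm:matching-tAn}. Starting from the higher critical simplex and working backwards, one checks that all but one choice of the vertex removed at each step either forces the next cell to be matched upwards (hence impossible in an alternating path) or drops $\eta$ below the value needed to reach the target; the surviving choice propagates, so the descending alternating path is unique, wrapping once through the big $1^{d-1}$ component and sweeping across the intervening $1^{\frac d2-1}$ blocks, with length dictated by the number of such blocks. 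Evaluating the product of the incidence signs along this path — tracking, for each vertex, the parity of the number of smaller vertices inserted or deleted between its insertion and its deletion, exactly as in the $\tilde A_n$ proof — telescopes to $(-1)^{\beta+1}$, $(-1)^{\beta+\frac d2}$ and $(-1)^\beta$ respectively; combining with the trivial path when one exists (and checking it cancels or does not, as in Theorem~\ref{thm:matching-tAn}) gives the stated values.

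The main obstacle I expect is this last step: identifying the unique cross-block descending path and then carrying out the sign bookkeeping across several blocks, in particular handling the interaction between the two $B$-type end components (which produce the $\tfrac d2$- and $\beta$-terms) and the interior $1^{d-1}$ and $1^{\frac d2-1}$ blocks. The $\tilde A_n$ proof shows the right organising principle — isolating, for each vertex, the single "addition/removal pair" and reading off its sign from the parity of the interleaving vertices — so the computation should be tractable, but it will be the longest and most error-prone part of the argument and deserves an explicit worked example (as in Table~\ref{table:Bn-alternating-path}).
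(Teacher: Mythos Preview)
Your plan is essentially sound and close in spirit to the paper's argument, but with one organisational difference and one misconception worth flagging.

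The difference: you propose to handle the four ``same-$q_1$'' pairs by transferring the $B_n$ incidence numbers through the poset isomorphism $K_h\cong K^B_{n-h}$, with the sign correction $(-1)^{h+|\hat\sigma|+1}$ coming from the order-reversing relabeling. This is a legitimate shortcut and it does work; the paper instead recomputes all seven families directly by a uniform backward analysis (start from the target critical cell, list the possible $\rho_m=\text{target}\cup\{v\}$, and eliminate those matched upward). Your route saves rederiving the long $(\sigma'_q,\sigma_{q-2})$-type path, since that one sits entirely inside a single block and is inherited verbatim from Theorem~\ref{thm:matching-Bn}.

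The misconception: you expect the three cross-block pairs (those with $q_1$ decreasing) to be the long ones, ``wrapping once through the big $1^{d-1}$ component and sweeping across the $1^{\frac d2-1}$ blocks'', possibly with a second path that may or may not cancel as in $\tilde A_n$. In fact the opposite happens: the only long alternating path in $\tilde C_n$ is the same-block one $[\sigma'_{q_1,q_2}:\sigma_{q_1,q_2-2}]^\M$ (of length $k-q_1-q_2$), which you already get from the $B_n$ transfer. The cross-block paths are all short: $[\sigma'_{q_1,q_2}:\sigma_{q_1-2,q_2}]^\M$ and $[\sigma'_{q_1,q_2}:\sigma'_{q_1-1,q_2}]^\M$ come from trivial paths (the target is literally a facet of the source), and $[\sigma_{q_1,q_2}:\sigma_{q_1-1,q_2}]^\M$ comes from a single length-$1$ path. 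There is never a second competing path and hence no cancellation. Your backward-elimination strategy will discover this, but your written expectation would lead you to look in the wrong place.

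One further point your plan does not make explicit: the backward analysis must also show that \emph{mixed} pairs such as $(\sigma'_{q_1,q_2},\sigma_{q_1-1,q_2-1})$ have incidence number zero. The paper's uniform backward sweep from each target cell does this automatically; in your split approach you should state it and check it when you enumerate the admissible $v$'s.
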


\begin{proof}
  As we have already said, for $d$ odd there is exactly one critical simplex.
  Suppose from now on that $d$ is even.
  Let $\smash{n=k\frac d2 + r}$, with $0\leq r \leq \frac d2 - 1$.
  
  \textbf{Part 1: the critical simplices.}
  In $K_h$ there are critical simplices if and only if $\smash{n-h \equiv 0 \pmod[\big]{\frac d2}}$, i.e.\ when $\smash{h=q_1\frac d2+r}$ for some $q_1$ (with $0\leq q_1 \leq k$).
  By Theorem \ref{thm:matching-Bn} there are two families of critical simplices: $\sigma_{q_1,q_2}$ (for $0\leq q_1+q_2 \leq k-2$) and $\sigma_{q_1,q_2}'$ (for $0\leq q_1+q_2\leq k$), as shown in Table \ref{table:tCn-critical} and Figure \ref{fig:tCn-critical}.
  In a fixed dimension $m=n-k+l$ ($0\leq l \leq k$) there are $2l+1$ critical simplices if $l\leq k-1$ and $l$ critical simplices if $l=k$. See Table \ref{table:tCn-critical-by-dimension}.

\textbf{Part 2: paths ending in $\sigma_{q_1,q_2}$.}
  Consider a generic alternating path starting from any critical cell $\rho_0$ and ending in a critical cell of the form $\sigma_{q_1,q_2}$:
  \[ \rho_0 \rhd \tau_1 \lhd \rho_1\rhd \tau_2\lhd \rho_2\rhd  \dots \rhd \tau_m\lhd \rho_m \rhd \sigma_{q_1,q_2}. \]
  Let $\rho_m = \sigma_{q_1,q_2} \cup \{v\}$.
  If $\smash{v=q_1\frac d2 + r}$ then $\rho_m = \sigma_{q_1+2,\, q_2}'$ and the alternating path stops.
  If $\smash{v=n - q_2\frac d2}$ then the alternating path stops at $\rho_m = \sigma_{q_1,\,q_2+1}$.
  Suppose $q_1+q_2 \leq k-3$, otherwise there are no more cases.
  If $q_1 \frac d2 + r + d < v < n - q_2\frac d2$, then $\Gamma(\rho_m)$ has at least one connected component of size $d-1$ and therefore $\rho_m$ is matched with a simplex of higher dimension; thus the path stops without having reached a critical simplex.
  If $v = q_1 \frac d2 + r + d$ then $\rho_m$ is matched with
  \[ \tau_m = \rho_m \setminus \textstyle\left\{ q_1 \frac d2 + r + \frac d2 \right\} = 1^{q_1\frac d2 + r}01^{\frac d2 - 1}01^{d-1}0(1^{\frac d2 -1}0)^{k-q_1-q_2-3}01^{q_2\frac d2} \]
  in the binary string notation.
  From here the path can continue in many ways. Let $\rho_{m-1}  = \tau_m \cup \{w\}$.
  \begin{itemize}
    \item If $w = q_1\frac d2 + r$, we end up with $\rho_{m-1} = \sigma_{q_1+1, \, q_2}$.
    \item If $w = q_1\frac d2 + r + \frac d2$ we would be going back to $\rho_m$.
    \item If $w > q_1\frac d2 + r + \frac 32 d$, then $\Gamma(\rho_{m-1})$ has at least one connected component of size $d-1$ and therefore $\rho_{m-1}$ is matched with a simplex of higher dimension.
\item If $w = q_1\frac d2 + r + \frac 32 d$, then $\rho_{m-1}$ is matched with
    \begin{align*}
      \tau_{m-1} & = \rho_{m-1} \setminus \textstyle\left\{ q_1 \frac d2 + r + d \right\} \\
      & = 1^{q_1\frac d2 + r}01^{\frac d2 - 1}01^{\frac d2 - 1}01^{d-1}0(1^{\frac d2 -1}0)^{k-q_1-q_2-4}01^{q_2\frac d2}.
    \end{align*}
    By induction, repeating the same argument as above, this path can be continued in exactly one way and it eventually arrives at the critical simplex $\sigma_{q_1,\,q_2+2}'$.
    The path has length $m=k-q_1-q_2-2$ and is as follows.
    \begin{align*}
      \tau_1 & = \sigma_{q_1,\,q_2+2}' \setminus \textstyle \left\{ n - q_2\frac d2 \right\}, \\
      \rho_1 & = \tau_1 \cup \textstyle \left\{ n - q_2\frac d2 - d \right\}, \\
      \tau_2 & = \rho_1 \setminus \textstyle \left\{ n - q_2\frac d2 - \frac d2 \right\}, \\
      \rho_2 & = \tau_2 \cup \textstyle \left\{ n - q_2\frac d2 - \frac 32 d \right\}, \\
      \tau_3 & = \rho_2 \setminus \textstyle \left\{ n - q_2\frac d2 - d \right\}, \\
      & \vdots \\
      \rho_{m-1} & = \tau_{m-1} \cup \textstyle \left\{ q_1 \frac d2 + r + d \right\}, \\
      \tau_m & = \rho_{m-1} \setminus \textstyle \left\{ q_1\frac d2 + r + \frac 32 d \right\}, \\
      \rho_m & = \tau_m \cup \textstyle \left\{ q_1 \frac d2 + r + \frac d2 \right\}, \\
      \sigma_{q_1,q_2} & = \rho_m \setminus \textstyle \left\{ q_1\frac d2 + r + d \right\}.
    \end{align*}
  \end{itemize}
  
  \textbf{Part 3: paths ending in $\sigma_{q_1,q_2}'$.}
  Consider now a generic alternating path starting from any critical cell $\rho_0$ and ending in a critical cell of the form $\sigma_{q_1,q_2}'$:
  \begin{equation}
    \rho_0 \rhd \tau_1 \lhd \rho_1\rhd \tau_2\lhd \rho_2\rhd  \dots \rhd \tau_m\lhd \rho_m \rhd \sigma_{q_1,q_2}'.
    \label{eq:alternating-path-tCn}
  \end{equation}
  As usual, let $\rho_m = \sigma_{q_1,q_2}' \cup \{v\}$.
  For the same reasons as above, there are only three possibilities: $v = q_1\frac d2 + r$, $v = q_1\frac d2 + r + \frac d2$, $v = n - q_2\frac d2$; in all the other cases, $\rho_m$ is matched with a simplex of higher dimension.
  If $v = q_1\frac d2 + r$ then the path ends (to the left in \eqref{eq:alternating-path-tCn}) at $\rho_m = \sigma_{q_1+1,\,q_2}'$.
  If $v = q_1\frac d2 + r + \frac d2$ then the path ends at $\sigma_{q_1,q_2}$.
  Finally, if $v = n - q_2\frac d2$ then the path ends at $\sigma_{q_1, \,q_2+1}'$.
  
  \textbf{Part 4: incidence numbers.}
  We have seven families of incidence numbers to compute, each coming from one of the alternating paths we have found.
  \begin{itemize}
    \item From $\sigma_{q_1+2,\,q_2}'$ to $\sigma_{q_1,q_2}$.
    The alternating path is trivial and consists in removing the vertex $v=q_1\frac d2 + r$, so
    \begin{align*}
      & [\sigma_{q_1+2,\,q_2}' : \sigma_{q_1,q_2}]^\M = (-1)^{|\{w \in \sigma_{q_1,q_2} \mid w < v\}|} = (-1)^{q_1\frac d2 + r}, \\
      \Longrightarrow \quad & [\sigma_{q_1,q_2}' : \sigma_{q_1-2,\,q_2}]^\M = (-1)^{q_1\frac d2 + r} = (-1)^{\beta}.
    \end{align*}
    
    \item From $\sigma_{q_1,\,q_2+1}$ to $\sigma_{q_1,q_2}$.
    Again the path is trivial, and it consists in removing $v = n- q_2\frac d2$. Therefore
    \begin{align*}
      [\sigma_{q_1,\,q_2+1} : \sigma_{q_1,q_2}]^\M & = (-1)^{|\{w \in \sigma_{q_1,q_2} \mid w < v\}|} \\
      & = (-1)^{n - q_2\frac d2 - (k-q_1-q_2-1)} \\
      & = (-1)^{k \frac d2 + r - q_2\frac d2 - k + q_1 + q_2 + 1}\\
      & = (-1)^{(k-q_2)\big(\frac d2 - 1\big) + r + q_1 + 1}, \\
      \Longrightarrow \quad [\sigma_{q_1,q_2} : \sigma_{q_1,\,q_2-1}]^\M & = (-1)^{(k-q_2+1)\big(\frac d2 - 1 \big) + r + q_1 + 1}  \\
      & = (-1)^{\alpha}.
    \end{align*}
    
    \item From $\sigma_{q_1+1,\, q_2}$ to $\sigma_{q_1,q_2}$.
    The path consists in removing $q_1\frac d2 + r$, adding $q_1\frac d2 + r + \frac d2$, and removing $q_1\frac d2 + r + d$.
Therefore
    \begin{align*}
      [\sigma_{q_1+1,\,q_2} : \sigma_{q_1,q_2}]^\M & = (-1)(-1)^{q_1\frac d2 + r}(-1)^{q_1\frac d2 + r + \frac d2 - 1}(-1)^{q_1\frac d2 + r + d-1} \\
      & = (-1)^{q_1\frac d2 + r + \frac d2 + 1}, \\
      \Longrightarrow \quad [\sigma_{q_1,q_2} : \sigma_{q_1-1,\,q_2}]^\M & = (-1)^{(q_1-1)\frac d2 + r+ \frac d2 + 1}  \\
      & = (-1)^{\beta + 1}.
    \end{align*}
    
    \item From $\sigma_{q_1,\,q_2+2}'$ to $\sigma_{q_1,q_2}$. The path is the one we explicitly wrote at the end of Part 2.
    Notice that from $\rho_{i-1}$ to $\tau_i$ one removes the vertex $v_i = n-(q_2-i+1)\frac d2$, and from $\tau_i$ to $\rho_i$ one adds the vertex $v_i' = n-(q_2-i-1)\frac d2$.
    Therefore
    \[ [\rho_{i-1} : \tau_i][\rho_i : \tau_i] = (-1)^{|\{ w \in \tau_i \mid v_i' < w < v_i \}|} = (-1)^{d-1} = -1. \]
    Then we can compute the incidence number in the Morse complex:
    \begin{align*}
      [\sigma_{q_1,\,q_2+2}' : \sigma_{q_1,q_2}]^\M &= (-1)^m \left( \prod_{i=1}^m [\rho_{i-1} : \tau_i][\rho_i : \tau_i] \right) [\sigma_{q_1,q_2} : \rho_m] \\
      &= (-1)^m (-1)^m (-1)^{q_1\frac d2 + r + (d-1)} \\
      & = (-1)^{q_1\frac d2 + r + 1}, \\
      \Longrightarrow \quad [\sigma_{q_1,q_2}' : \sigma_{q_1,\,q_2-2}]^\M &= (-1)^{q_1\frac d2 + r + 1}  \\
      & = (-1)^{\beta + 1}.
    \end{align*}
    
    \item From $\sigma_{q_1+1,\,q_2}'$ to $\sigma_{q_1,q_2}'$.
    The path is trivial and consists of removing $v = q_1\frac d2 + r$.
    Then
    \begin{align*}
      & [\sigma_{q_1+1,\,q_2}' : \sigma_{q_1,q_2}']^\M = (-1)^{q_1\frac d2 + r}, \\
      \Longrightarrow \quad & [\sigma_{q_1,q_2}' : \sigma_{q_1-1,\,q_2}']^\M = (-1)^{(q_1-1)\frac d2 + r} = (-1)^{\beta+\frac d2}.
    \end{align*}
    
    \item From $\sigma_{q_1,q_2}$ to $\sigma_{q_1,q_2}'$.
    The path is trivial and consists in removing $v = q_1\frac d2 + r + \frac d2$.
    Then
    \begin{align*}
      & [\sigma_{q_1,\,q_2} : \sigma_{q_1,q_2}']^\M = (-1)^{q_1\frac d2 + r + \frac d2 - 1} = (-1)^{\beta + \frac d2 + 1}.
    \end{align*}
    
    \item From $\sigma_{q_1, \,q_2+1}'$ to $\sigma_{q_1,q_2}'$.
    The path is trivial and consists in removing $v = n - q_2\frac d2$.
    Then
    \begin{align*}
      [\sigma_{q_1, \,q_2+1}' : \sigma_{q_1,q_2}']^\M &= (-1)^{n - q_2\frac d2 - (k-q_1-q_2)} \\
      &= (-1)^{k\frac d2 + r - q_2 \frac d2 -k + q_1 + q_2} \\
      &= (-1)^{(k-q_2)\big(\frac d2 - 1\big) + r + q_1}, \\
      \Longrightarrow \quad [\sigma_{q_1,q_2}' : \sigma_{q_1,\,q_2-1}']^\M &= (-1)^{(k-q_2+1)\big(\frac d2 - 1\big) + r + q_1} \\
      &= (-1)^{\alpha + 1}. \qedhere
    \end{align*}
  \end{itemize}
\end{proof}

We can finally compute the homology $H_*(\GW; R)$ for Artin groups of type $\tilde C_n$.

\begin{theorem}[Homology in case $\tilde C_n$]
  \label{thm:homology-tCn}
  Let $\GW$ be an Artin group of type $\tilde C_n$.
  Then the $\varphi_d$-primary component of $H_*(\GW;R)$ is trivial for $d$ odd, and for $d$ even is as follows:
  \[ H_m(\GW;R)_{\varphi_d} \cong
    \begin{cases}
      \left( R/(\varphi_d) \right)^{\oplus m+k-n+1} & \text{if $n-k \leq m \leq n-1$}, \\
      0 & \text{otherwise},
    \end{cases}
  \]
  where $n=k\frac d2 + r$.
\end{theorem}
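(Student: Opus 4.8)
The plan is to feed the critical cells of the precise matching of Theorem~\ref{thm:matching-tCn} into the formula of Theorem~\ref{thm:homology-artin-groups}, reducing the theorem to a computation of the ranks $\rk\delta_m^{\varphi_d}$ of the Morse differential in each degree. Since $\GW$ is of affine type with $n+1$ generators, that formula expresses $H_m(\XW;R)$ as $\bigoplus_\varphi\bigl(R/(\varphi)\bigr)^{\oplus\rk\delta_{m+1}^\varphi}$ plus a free summand $H_m(C^0_*)$; and since $K$ is the boundary of the $n$-simplex on $S=\{0,\dots,n\}$, one has $H_m(C^0_*)\cong\tilde H_{m-1}(K)\cong\tilde H_{m-1}(S^{n-1})$, which is $R$ for $m=n$ and $0$ otherwise. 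Hence the $\varphi_d$-primary component of $H_m(\XW;R)$ has rank exactly $\rk\delta_{m+1}^{\varphi_d}$, the free summand (present only in degree $n$) contributing nothing $\varphi_d$-primary; and since the $K(\pi,1)$ property is known for $\tilde C_n$, this is a computation of $H_*(\GW;R)$.

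For $d$ odd, Theorem~\ref{thm:matching-tCn} provides a single critical simplex, so the Morse differential is identically zero, $\rk\delta_m^{\varphi_d}=0$ in every degree, and Theorem~\ref{thm:homology-artin-groups} gives that the $\varphi_d$-primary component of $H_*(\GW;R)$ vanishes.

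For $d$ even, write $n=k\frac d2+r$. Rather than manipulating the seven incidence-number families of Theorem~\ref{thm:matching-tCn} directly, I would argue by dimension counting. By Table~\ref{table:tCn-critical-by-dimension}, the Morse complex $V_*=C^0_*(K;\Q)^{\M_{\varphi_d}}$ has $2l+1$ critical cells in degree $n-k+l$ for $1\le l\le k-1$, exactly one in degree $n-k$, exactly $k+1$ in degree $n$, and none in degrees below $n-k$ or above $n$. As $V_*$ still computes $\tilde H_{*-1}(K)$, its homology is $\Q$ concentrated in degree $n$; so the identity $\dim V_m=\dim H_m(V_*)+\rk\delta_m^{\varphi_d}+\rk\delta_{m+1}^{\varphi_d}$, applied from the top downwards (where $V_{n+1}=0$ forces $\rk\delta_{n+1}^{\varphi_d}=0$), yields inductively $\rk\delta_{n-k+l}^{\varphi_d}=l$ for $0\le l\le k$ and $\rk\delta_m^{\varphi_d}=0$ for $m<n-k$. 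Substituting into Theorem~\ref{thm:homology-artin-groups}: for $m=n-k+l$ with $0\le l\le k-1$ one gets $H_m(\GW;R)_{\varphi_d}\cong\bigl(R/(\varphi_d)\bigr)^{\oplus l+1}$ with $l+1=m+k-n+1$, and for all other $m$ the rank is $0$, which is the asserted formula.

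An alternative, parallel to the $B_n$ and $\tilde A_n$ arguments, is to assemble the matrix of $\delta_{m+1}^{\varphi_d}$ explicitly from Theorem~\ref{thm:matching-tCn} and compute its rank degree by degree, ordering critical cells as in Table~\ref{table:tCn-critical-by-dimension}: in the generic degrees the sub-block recording boundaries from $\sigma$-type cells to $\sigma'$-type cells is diagonal with entries $\pm1$, so the rank is at least the number of $\sigma$-type source cells, and equality follows once one checks that each $\sigma'$-type column lies in the span of the $\sigma$-type columns (the extreme degrees $n$ and $n-k$, where only $\sigma'$-type cells occur, being treated directly). I expect the bookkeeping of the sign exponents $\alpha$ and $\beta$ through that elimination to be the only genuinely delicate point; the counting argument above is designed precisely to bypass it, at the cost of relying on the explicit cell tally of Table~\ref{table:tCn-critical-by-dimension} and on the identification $K\cong\partial\Delta^n$.
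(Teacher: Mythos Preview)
Your proof is correct. Both your argument and the paper's begin identically: for $d$ odd the single critical cell forces all differentials to vanish, and for $d$ even the question reduces via Theorem~\ref{thm:homology-artin-groups} to computing $\rk\delta_{m+1}^{\varphi_d}$ in each degree. The divergence is in how that rank is obtained.

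The paper carries out the linear-algebra computation you sketch as your ``alternative'': it writes out $\delta\sigma_{q_1,q_2}$ explicitly from the incidence numbers of Theorem~\ref{thm:matching-tCn}, observes that these vectors are linearly independent (each one being the unique one hitting $\sigma'_{q_1,q_2}$), and then verifies by a sign calculation that every $\delta\sigma'_{q_1,q_2}$ is a $\pm$-combination of two adjacent $\delta\sigma$'s, so that $\mathcal B=\{\delta\sigma_{q_1,q_2}:q_1+q_2=l\}$ is an honest basis of $\Im\delta_{m+1}^\varphi$. The top degree $l=k-1$ is handled by the same combinatorics with formal placeholders $\epsilon_{q_1,q_2}$ in place of the non-existent $\delta\sigma_{q_1,q_2}$.

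Your dimension-counting route sidesteps all of that. You use only two inputs: the cell count per degree from Table~\ref{table:tCn-critical-by-dimension}, and the fact that the Morse complex $V_*$ still computes $H_*(C^0_*)\cong\tilde H_{*-1}(\partial\Delta^n)$, which is $\Q$ concentrated in degree $n$. The rank--nullity recursion then pins down every $\rk\delta_m^{\varphi_d}$ uniquely, with no reference to the signs $\alpha,\beta$ at all. This is shorter and more robust; the only cost is that it uses the preciseness of the matching (to invoke Theorem~\ref{thm:homology-artin-groups}) without ever exploiting the specific incidence formulas that Theorem~\ref{thm:matching-tCn} works hard to establish. The paper's approach, by contrast, is constructive --- it actually exhibits a basis for each image --- and serves as an internal consistency check on those formulas.
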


\begin{proof}
  In order to apply Theorem \ref{thm:homology-artin-groups} we need to find the rank the boundary maps $\delta_{m+1}^\varphi$ of the Morse complex.
  For $d$ odd there is only one critical cell, thus all the boundaries vanish.
  Suppose from now on that $d$ is even, and let $\smash{n = k\frac d2 + r}$.
  In order to have a non-trivial boundary $\delta_{m+1}^\varphi$ we must have at least one critical simplex both in dimension $m$ and in dimension $m+1$, thus $m=n-k+l$ with $0 \leq l \leq k-1$ (see Table \ref{table:tCn-critical-by-dimension}).
  
  \textbf{Case 1: $l\leq k-2$.}
  We are going to prove that, for $l\leq k-2$, a basis for the image of $\delta = \delta_{m+1}^\varphi$ is given by
  \[ \mathcal B = \big\{ \delta \sigma_{q_1,q_2} \mid q_1+q_2 = l \big\} . \]
  By Theorem \ref{thm:matching-tCn} we obtain the following formula for $\delta \sigma_{q_1,q_2}$:
  \[ \delta \sigma_{q_1, q_2} = (-1)^\alpha \, \sigma_{q_1,\,q_2-1} + (-1)^{\beta + 1} \, \sigma_{q_1-1,\,q_2} + (-1)^{\beta + \frac d2 + 1} \, \sigma_{q_1,q_2}', \]
  where $\alpha = (k-q_2)\big(\frac d2 - 1\big) + q_1 + r + \frac d2$ and $\beta = q_1\frac d2 + r$.
  In this formula the $\sigma_{q_1,\,q_2-1}$ (resp.\ $\sigma_{q_1-1,\,q_2}$) term vanishes if $q_2 = 0$ (resp.\ $q_1 = 0$).
  The term $\sigma_{q_1,q_2}'$ appears in $\delta \sigma_{q_1,q_2}$ but not in any other element of $\mathcal B$, thus $\mathcal B$ is a linearly independent set.
  In addition, if $q_1 + q_2 = l+1$, we have that
  \begin{align*}
    & \delta \sigma_{q_1,q_2}' = (-1)^{\beta + 1} \sigma_{q_1,\,q_2-2} + (-1)^{\beta} \sigma_{q_1-2,\,q_2} + (-1)^{\alpha+1} \sigma_{q_1,\,q_2-1}' + (-1)^{\beta + \frac d2} \sigma_{q_1-1,\,q_2}' \\
    &= (-1)^{\alpha + \beta + \frac d2} \Big( (-1)^{\alpha + \frac d2 + 1}\, \sigma_{q_1,\,q_2-2} + (-1)^{\beta+1}\, \sigma_{q_1-1,\,q_2-1} + (-1)^{\beta + \frac d2 + 1} \, \sigma_{q_1,\,q_2-1}' \Big) \\
    & \quad + (-1)^{\frac d2 + 1} \Big( (-1)^{\alpha+1} \, \sigma_{q_1-1,\,q_2-1} + (-1)^{\beta + \frac d2 + 1} \, \sigma_{q_1-2,\,q_2} + (-1)^{\beta + 1}\, \sigma_{q_1-1,\,q_2}' \Big) \\
    &= (-1)^{\alpha + \beta + \frac d2}\, \delta \sigma_{q_1,\,q_2-1} + (-1)^{\frac d2 + 1}\, \delta \sigma_{q_1-1,\,q_2}.
  \end{align*}
  Therefore $\mathcal B$ generates the image of $\delta_{m+1}^\varphi$. Thus $\rk \delta_{m+1}^\varphi = |\mathcal B| = l+1$ for $l \leq k-2$.
  
  \textbf{Case 2: $l = k-1$.}
  For $l=k-1$, i.e.\ $m=n-1$, the situation is a bit different because there are no critical simplices of the form $\sigma_{q_1,q_2}$ with $q_1+q_2 = l$.
  However we can still define
  \[ \epsilon_{q_1, q_2} = (-1)^\alpha \, \sigma_{q_1,\,q_2-1} + (-1)^{\beta + 1} \, \sigma_{q_1-1,\,q_2} + (-1)^{\beta + \frac d2 + 1} \, \sigma_{q_1,q_2}' \]
  for $q_1+q_2 = l$, and 
  \[ \mathcal B' = \big\{ \epsilon_{q_1,q_2} \mid q_1+q_2 = l \big\} . \]
  The term $\sigma_{q_1,q_2}'$ appears in $\epsilon_{q_1,q_2}$ but not in any other element of $\mathcal B'$, thus $\mathcal B'$ is a linearly independent set.
  As above we have that, for $q_1+q_2 = l+1$,
  \[ \delta \sigma_{q_1,q_2}' = (-1)^{\alpha + \beta + \frac d2}\, \epsilon_{q_1,\,q_2-1} + (-1)^{\frac d2 + 1}\, \epsilon_{q_1-1,\,q_2}. \]
  Then $\mathcal B'$ generates (and so it is a basis of) the image of $\delta_n^\varphi$.

  We have proved that, for $0\leq l \leq k-1$, the rank of $\delta_{m+1}^\varphi$ is equal to $l+1 = m+k-n+1$. Then we conclude applying Theorem \ref{thm:homology-artin-groups}.
\end{proof}

\addtocontents{toc}{\protect\setcounter{tocdepth}{-1}}

\section*{Concluding remarks}

\addtocontents{toc}{\protect\setcounter{tocdepth}{2}}
\addcontentsline{toc}{section}{Concluding remarks, Acknowledgements, References}
\addtocontents{toc}{\protect\setcounter{tocdepth}{-1}}

Future works will focus on other families of Artin groups.
In particular it seems that precise matchings can be constructed in all finite and affine cases (see \cite{paolini2017local}), possibly allowing explicit homology computations.

The methods developed in this paper are particularly powerful when the coefficients are over a PID, but Artin groups with roots of different lengths (e.g.\ $B_n$, $\smash{\tilde B_n}$, and $\smash{\tilde C_n}$) also admit natural representations over polynomial rings with more than one variable.
We believe that some of our theory can be extended to such cases, and this can also be the aim of future works.

\section*{Acknowledgements}
The authors are grateful to the anonymous referee for his/her useful comments and suggestions.
This work was partially supported by Ministero dell'Istruzione, dell'Università e della Ricerca, and by University of Pisa, Project no.\ PRA\_67.

\bibliographystyle{amsalpha-abbr}
\bibliography{bibliography}

\bigskip\bigskip\bigskip

\end{document}